\theoremstyle{definition}
\newtheorem{assumption}{Assumption}
\newtheorem{definition}{Definition}
\newcommand{\be}{\begin{equation}}
\newcommand{\ee}{\end{equation}}
\newcommand{\als}[1]{\begin{align*}#1\end{align*}}
\newtheorem{theorem}{Theorem}[section]
\newtheorem{lemma}[theorem]{Lemma}
\newcommand{\norm}[1]{\left|\left|#1\right|\right|}
\newcommand{\alignShort}[1]{\begin{align*}#1\end{align*}}
\begin{document}
\begin{titlepage}
\title{A Distributed Newton Method for Network Utility Maximization \footnote{This work was supported by National Science Foundation under Career grant DMI-0545910, the
DARPA ITMANET program, ONR MURI N000140810747 and AFOSR Complex Networks Program.}
}

\author{Ermin Wei\thanks{Department of Electrical Engineering and Computer Science,
Massachusetts Institute of Technology},
Asuman Ozdaglar\footnotemark[2], and
Ali Jadbabaie \thanks{Department of Electrical
and Systems Engineering and GRASP Laboratory, University of Pennsylvania}
}
\date{\today}
\maketitle

\begin{abstract}
Most existing work uses dual decomposition and first-order methods
to solve Network Utility Maximization (NUM) problems in a
distributed manner, which suffer from slow rate of convergence
properties. This paper develops an alternative distributed
Newton-type fast converging algorithm for solving NUM problems with
self-concordant utility functions. By using novel matrix splitting
techniques, both primal and dual updates for the Newton step can be
computed using iterative schemes in a decentralized manner. We propose a stepsize rule and provide a distributed procedure to compute it in finitely many iterations. The key feature of our direction and stepsize computation schemes is that both are implemented using the same distributed information exchange mechanism employed by first order methods. We show that even when the
Newton direction and the stepsize in our method are computed within
some error (due to finite truncation of the iterative schemes), the
resulting objective function value still converges superlinearly in terms of primal iterations to
an explicitly characterized error neighborhood. Simulation results
demonstrate significant convergence rate improvement of our
algorithm relative to the existing first-order methods based on dual
decomposition.
\end{abstract}
\thispagestyle{empty}
\end{titlepage}

\section{Introduction}\label{sec:intro}

Most of today's communication networks are large-scale and comprise
of agents with heterogeneous preferences. Lack of access to
centralized information in such networks necessitate design of
distributed control algorithms that can operate based on locally
available information. Some applications include routing and
congestion control in the Internet, data collection and processing
in sensor networks, and cross-layer design in wireless networks.
This work focuses on the rate control problem in wireline networks,
which can be formulated in the {\it Network Utility Maximization
(NUM)} framework proposed in  \cite{KellyMaTan} (see also
\cite{LowLapsley}, \cite{Srikant}, and \cite{mung}). NUM problems
are characterized by a fixed network and a set of sources, which
send information over the network along predetermined routes. Each
source has a local utility function over the rate at which it sends
information. The goal is to determine the source rates that maximize
the sum of utilities subject to link capacity constraints. The
standard approach for solving NUM problems relies on using dual
decomposition and subgradient (or first-order) methods, which
through a price exchange mechanism among the sources and the links
yields algorithms that can operate on the basis of local
information.\footnote{The price exchange mechanism involves
destinations (end nodes of a route) sending route prices (aggregated
over the links along the route) to sources, sources updating their
rates based on these prices and finally links updating prices based
on new rates sent over the network.} One major shortcoming of this
approach is the slow rate of convergence.

In this paper, we propose a novel Newton-type second-order method
for solving the NUM problem in a distributed manner, which leads to
significantly faster convergence. Our approach involves transforming
the inequality constrained NUM problem to an equality-constrained
one through introducing slack variables and logarithmic
barrier functions, and using an equality-constrained Newton method
for the reformulated problem. There are two challenges in
implementing this method in a distributed manner. First challenge is
the computation of the Newton direction. This computation involves a
matrix inversion, which is costly and requires global information.
We solve this problem by using an iterative scheme based on a novel
matrix splitting technique. Since the objective function of the
(equality-constrained) NUM problem is {\it separable}, i.e., it is
the sum of functions over each of the variables, this splitting
enables computation of the Newton direction using decentralized
algorithms based on limited ``scalar" information exchange between
sources and links. This exchange involves destinations iteratively
sending route prices (aggregated link prices or dual variables along
a route) to the sources, and sources sending the route price scaled
by the Hessian to the links along its route. Therefore, our
algorithm has \textit{comparable level of information exchange} with the
first-order methods applied to the NUM problem.

The second challenge is related to the computation of a stepsize
rule that can guarantee local superlinear convergence of the primal iterations. Instead of
the iterative backtracking rules typically used with Newton methods,
we propose a stepsize rule which is inversely proportional to the
inexact Newton decrement (where the inexactness arises due to errors
in the computation of the Newton direction) if this decrement is
above a certain threshold and takes the form of a pure Newton step
otherwise. Computation of the inexact Newton decrement involves aggregating local information from the sources and links in the network. We propose a novel distributed procedure for computing the inexact Newton decrement in finite number of steps using again the same information exchange mechanism employed by first order methods.

Since our method uses iterative schemes to compute the Newton
direction, exact computation is not feasible.
Another major contribution of our work is to consider a truncated
version of this scheme, allow error in stepsize computation and present convergence rate analysis of
the constrained Newton method when the stepsize and the Newton
direction are estimated with some error. We show that when these
errors are sufficiently small, the value of the objective function
converges superlinearly in terms of primal iterations to a neighborhood of the optimal objective
function value, whose size is explicitly quantified as a function of
the errors and bounds on them.

Our work contributes to the growing literature on distributed
optimization and control of multi-agent networked systems. There are
two standard approaches for designing distributed algorithms for
such problems. The first approach, as mentioned above, uses dual
decomposition and subgradient methods, which for some problems
including NUM problems lead to iterative distributed algorithms (see
\cite{KellyMaTan}, \cite{LowLapsley}). Subsequent work by Athuraliya
and Low in \cite{Low} use diagonal scaling to approximate Newton
steps to speed up the subgradient algorithm while maintaining their
distributed nature. Despite improvements in speed over the
first-order methods, as we shall see, the performance of this
modified algorithm does not achieve the rate gains obtained by
second-order methods.

The second approach involves considering \textit{consensus-based}
schemes, in which agents exchange local estimates with their
neighbors with the goal of aggregating information over an exogenous
(fixed or time-varying) network topology (see \cite{johnthes},
\cite{Blondel}, \cite{ConsensusDelay}, \cite{distasyn}, \cite{ali},
\cite{murray}, \cite{AsuNewton} and \cite{Consensus2}). It has been
shown that under some mild assumption on the connectivity of the
graph and updating rules, the distance from the vector formed by current estimates to consensus diminishes linearly. Consensus schemes can be used to
compute the average of local values or more generally as a building
block for developing distributed optimization algorithms with
linear/sublinear rate of convergence (\cite{NedicSubgradientConsensus}). The stepsize for the distributed Newton method can be computed using consensus type of algorithms. However, the distributed Newton method achieves quadratic rate of convergence for the primal iterations, using consensus results in prohibitively slow stepsize computation at each iteration, and is hence avoided in our method.

Other than the papers cited above, our paper is also related to
\cite{BetGaf83}, \cite{Klincewicz83}, \cite{BoydBP} and
\cite{AsuNewton}. In \cite{BetGaf83}, Bertsekas and Gafni studied a
projected Newton method for optimization problems with twice
differentiable objective functions and simplex constraints. They
proposed finding the Newton direction (exactly or approximately)
using a conjugate gradient method. This work showed that when
applied to multi-commodity network flow problems, the conjugate
gradient iterations can be obtained using simple graph operations,
however did not investigate distributed implementations. Similarly,
in \cite{Klincewicz83}, Klincewicz proposed a Newton method for
network flow problems that computes the dual variables at each step
using an iterative conjugate gradient algorithm. He showed that
conjugate gradient iterations can be implemented using a
``distributed" scheme that involves simple operations and information
exchange along a spanning tree. Spanning tree based computations
involve passing all information to a centralized node and may
therefore be restrictive for NUM problems which are characterized by
decentralized (potentially autonomous) sources.

In \cite{BoydBP}, the authors have developed a distributed
Newton-type method for the NUM problem using a belief propagation
algorithm. Belief propagation algorithms, while performing well in
practice, lack systematic convergence guarantees. Another recent
paper \cite{AsuNewton} studied a Newton method for
equality-constrained network optimization problems and presented a
convergence analysis under Lipschitz assumptions. In this paper, we
focus on an inequality-constrained problem, which is reformulated as
an equality-constrained problem using barrier functions. Therefore,
this problem does not satisfy Lipschitz assumptions. Instead, we
assume that the utility functions are self-concordant and present a
novel convergence analysis using properties of self-concordant
functions.

Our analysis for the convergence of the algorithm also relates to
work on convergence rate analysis of inexact Newton methods (see
\cite{dembo}, \cite{inexactKelley}). These works focus on providing
conditions on the amount of error at each iteration relative to the
norm of the gradient of the current iterate that ensures superlinear
convergence to the {\it exact optimal solution} (essentially
requiring the error to vanish in the limit). Even though these
analyses can provide superlinear rate of convergence, the vanishing
error requirement can be too restrictive for practical
implementations. Another novel feature of our analysis is the
consideration of \textit{convergence to an approximate neighborhood of the
optimal solution}. In particular, we allow a fixed error level to be
maintained at each step of the Newton direction computation and show
that superlinear convergence is achieved by the primal iterates to
an error neighborhood, whose size can be controlled by tuning the
parameters of the algorithm. Hence, our work also contributes to the
literature on error analysis for inexact Newton methods.

The rest of the paper is organized as follows: Section
\ref{sec:model} defines the problem formulation and related
transformations. Section \ref{sec:alg} describes the exact
constrained primal-dual Newton method for this problem. Section
\ref{sec:inex} presents a distributed iterative scheme for computing
the dual Newton step and the distributed inexact Newton-type
algorithm. Section \ref{sec:analysis} contains the rate of
convergence analysis for our algorithm. Section \ref{sec:sim}
presents simulation results to demonstrate convergence speed
improvement of our algorithm to the existing methods with linear
convergence rates. Section \ref{sec:conclu} contains our concluding
remarks.

\vskip 0.2pc

\noindent\textbf{Basic Notation and Notions:}

A vector is viewed as a column vector, unless clearly stated
otherwise. We write $\mathbb{R}_+$ to denote the set of nonnegative
real numbers, i.e., $\mathbb{R}_+ = [0,\infty)$. We use subscripts
to denote the components of a vector and superscripts to index a
sequence, i.e., $x_i$ is the $i^{th}$ component of vector $x$ and
$x^k$ is the $k$th element of a sequence. When $x_{i}\geq 0$ for all
components $i$ of a vector $x$, we write $x\geq 0$.

For a matrix $A$, we write $A_{ij}$ to denote the matrix entry in
the $i^{th}$ row and $j^{th}$ column, and $[A]_i$ to denote the
$i^{th}$ column of the matrix $A$, and $[A]^j$ to denote the
$j^{th}$ row of the matrix $A$. We write $I(n)$ to denote the
identity matrix of dimension $n\times n$. We use $x'$ and $A'$ to
denote the transpose of a vector $x$ and a matrix $A$ respectively.
For a real-valued function $f:X\rightarrow \mathbb{R}$, where $X$ is
a subset of $\mathbb{R}^n$, the gradient vector and the Hessian
matrix of $f$ at $x$ in $X$ are denoted by $\nabla f(x)$ and
$\nabla^2 f({x})$ respectively. We use the vector $e$ to denote the
vector of all ones.

A real-valued convex function $ g: X\rightarrow \mathbb{R}$, where
$X$ is a subset of $\mathbb{R}$, is \textit{self-concordant} if it is three times continuously differentiable and
$|{g}'''(x)| \leq 2{g}''(x)^\frac{ 3}{2}$ for all $x$ in its
domain.\footnote{Self-concordant functions are defined through the
following more general definition: a real-valued three times continuously differentiable convex function $
g: X\rightarrow \mathbb{R}$, where $X$ is a subset of $\mathbb{R}$,
is \textit{self-concordant}, if there exists a constant $a>0$, such
that $|{g}'''(x)| \leq 2a^{-\frac{1}{2}}{g}''(x)^\frac{3}{2}$ for
all $x$ in its domain \cite{InteriorBook}, \cite{Jarre}. Here we
focus on the case $a=1$ for notational simplification in the
analysis.} For real-valued functions in
$\mathbb{R}^n$, a convex function $g: X\rightarrow \mathbb{R}$,
where $X$ is a subset of $\mathbb{R}^n$, is self-concordant if it is
self-concordant along every direction in its domain, i.e., if the
function $\tilde g(t) = g(x+tv)$ is self-concordant in $t$ for all
$x$ and $v$. Operations that preserve self-concordance property
include summing, scaling by a factor $\alpha\geq 1$, and composition
with affine transformation (see \cite{Boyd} Chapter $9$ for more
details).

\section{Network Utility Maximization Problem}\label{sec:model}
We consider a network represented by a set $\mathcal{L}= \{1, ...,
L\}$ of (directed) links of finite nonzero capacity given by
$c=[c_l]_{l\in\mathcal{L}}$ with $c>0$. The network is shared by a set
$\mathcal{S} = \{1, ..., S\}$ of sources, each of which transmits
information along a predetermined route. For each link $l$, let
$S(l)$ denote the set of sources use it. For each source $i$, let
$L(i)$ denote the set of links it uses. We also denote the
nonnegative source rate vector by $s =[s_i]_{i\in\mathcal{S}}$. The
capacity constraint at the links can be compactly expressed as
\alignShort{Rs \leq c,} where $R$ is the \textit{routing
matrix}\footnote{This is also referred to as the \textit{link-source
incidence matrix} in the literature. Without loss of generality, we assume that each
source flow traverses at least one link, each link is used by at least one source and the links form a connected graph.} of dimension  $L\times S$,
i.e.,
 \begin{equation}\label{eq:Rmatrix}
 R_{ij} = \left\{ \begin{array}{rl}
    1 & \mbox{if link $i$ is on the route of source $j$},\\
    0 & \mbox{otherwise}.
    \end{array}
    \right.
 \end{equation}

We associate a utility function $U_i:\mathbb{R}_+\to\mathbb{R}$ with
each source $i$, i.e., $U_i(s_i)$ denotes the utility of source $i$
as a function of the source rate $s_i$. We assume the utility
functions are additive, such that the overall utility of the network
is given by $\sum_{i =1}^{S} U_i(s_i)$. Thus the Network Utility
Maximization(NUM) problem can be formulated as
\begin{align}\label{ineqFormulation}
 \mbox{maximize} \quad &\sum_{i =1}^{S} U_i(s_i)\\\nonumber
 \mbox{subject to} \quad &Rs \leq c,\\\nonumber
 & s \geq 0.
\end{align}
We adopt the following assumption.
\begin{assumption}\label{asmp:utility}
The utility functions $U_i:\mathbb{R}_+\to\mathbb{R}$ are strictly concave, monotonically nondecreasing
 on $(0, \infty)$. The functions
$-U_i:\mathbb{R}_+\to\mathbb{R}$ are self-concordant on
$(0,\infty)$.
\end{assumption}

The self-concordance assumption is satisfied by standard utility functions considered in the literature, for instance logarithmic, i.e., weighted proportional fair utility functions \cite{Srikant},  and concave quadratic utility functions, and is adopted here to allow a self-concordant analysis in establishing local quadratic convergence. We use $h(x)$ to denote the (negative of the)
objective function of problem (\ref{ineqFormulation}), i.e., $h(x) =
-\sum_{i =1}^{S} U_i({x_i})$, and $h^*$ to denote the (negative of
the) optimal value of this problem.\footnote{We consider the
negative of the objective function value to work with a minimization
problem.} Since $h(x)$ is continuous and the feasible set of problem
(\ref{ineqFormulation}) is compact, it follows that problem
(\ref{ineqFormulation}) has an optimal solution, and therefore $h^*$
is finite. Moreover, the interior of the feasible set is nonempty,
i.e., there exists a feasible solution $x$ with $x_i =
\frac{\underline{c}}{S+1}$ for all $i\in\mathcal{S}$ with
$\underline{c}>0$.\footnote{One possible value for $\underline{c}$
is $\underline{c}= \min_l\{c_l\}$.}

To facilitate the development of a distributed Newton-type method,
we consider a related equality-constrained problem by introducing
nonnegative slack variables $[y_l]_{l\in\mathcal{L}}$ for the
capacity constraints, defined by
\be\label{eq:slack}
\sum_{j=1}^{S}R_{lj}s_{j}+{y}_{l} = c_l \quad \textrm {for }l = 1, 2
\ldots L, \ee and logarithmic barrier functions for the
nonnegativity constraints (which can be done since the feasible set
of (\ref{ineqFormulation}) has a nonempty interior).\footnote{We
adopt the convention that $\log(x)=-\infty$ for $x\leq 0$.} We
denote the new decision vector by $x =
([s_i]'_{i\in\mathcal{S}},[y_l]'_{l\in\mathcal{L}})'$. This problem
can be written as
\begin{align}\label{eqFormulation}
 \mbox{minimize} \quad &-\sum_{i =1}^{S} U_i({x_i})-\mu\sum_{i =1}^{S+L} \log{({x_i})}\\\nonumber
 \mbox{subject to} \quad &A{x} = c,
\end{align}
where $A$ is the $L\times (S+L)$-dimensional matrix given by
\be\label{defA} A = [R \quad I(L)],\ee and $\mu$ is a nonnegative
barrier function coefficient. We use $f(x)$ to denote the objective
function of problem (\ref{eqFormulation}), i.e., \be\label{objfunc-eqformulation} f( x) =
-\sum_{i =1}^{S} U_i({x_i})-\mu\sum_{i =1}^{S+L}
\log{({x}_i)},\ee and $f^*$ to denote
the optimal value of this problem, which is finite for positive $\mu$.\footnote{This problem has a feasible solution, hence $f^*$ is upper bounded. Each of the variable $x_i$ is upper bounded by $\bar{c}$, where $\bar{c}=\max_l\{c_l\}$, hence by monotonicity of utility and logarithm functions, the optimal objective function value is lower bounded. Note that in the optimal solution of problem (\ref{eqFormulation}) $x_i\neq 0$ for all $i$, due to the logarithmic barrier functions.}

By Assumption \ref{asmp:utility}, the function $f(x)$ is separable,
strictly convex, and has a
positive definite diagonal Hessian matrix on the positive orthant. The function $f(x)$ is also self-concordant for $\mu\geq1$, since both summing and
scaling by a factor $\mu\geq 1$ preserve self-concordance property.

We write the optimal solution of problem (\ref{eqFormulation}) for a
fixed barrier function coefficient $\mu$ as $x(\mu)$. One can show
that as the barrier function coefficient $\mu$ approaches 0, the
optimal solution of problem (\ref{eqFormulation}) approaches that of
problem (\ref{ineqFormulation}), when the constraint set in
(\ref{ineqFormulation}) has nonempty interior and is convex
\cite{ConvexBertsekas}, \cite{oldInt}. Hence by continuity from
Assumption \ref{asmp:utility}, $h(x(\mu))$ approaches $h^*$.
Therefore, in the rest of this paper, unless clearly stated
otherwise, we study \textit{iterative distributed methods} for
solving problem (\ref{eqFormulation}) for a given $\mu$. In order to
preserve the self-concordance property of the function $f$, which
will be used in our convergence analysis, we first develop a
Newton-type algorithm for $\mu\geq 1$. In Section \ref{subsec:mu},
we show that problem (\ref{eqFormulation}) for any $\mu>0$ can be
tackled by solving two instances of problem (\ref{eqFormulation})
with different coefficients $\mu\ge 1$, leading to a solution
$x(\mu)$ that satisfies $\frac{h(x(\mu))-h^*}{h^*}\leq a$ for any
positive scalar $a$.

\section{Exact Newton Method}\label{sec:alg}

For each fixed $\mu$, problem (\ref{eqFormulation}) is feasible and has a convex
objective function, affine constraints, and a finite optimal value
$f^*$. Therefore, we can use a strong duality theorem to show that,
for problem (\ref{eqFormulation}), there is no duality gap  and
there exists a dual optimal solution (see \cite{Asubook}). Moreover,
since matrix $A$ has full row rank,

we can use a (feasible start)
equality-constrained Newton method to solve problem
(\ref{eqFormulation})(see \cite{Boyd} Chapter 10).

\subsection{Feasible Initialization}\label{sec:init}

We initialize the algorithm with some feasible and strictly positive
vector $x^0$. For example, one such initial vector is given by
\begin{align}\label{init}
x_i^0 &= \frac{\underline{c}}{S+1}\quad \textrm{for } i = 1, 2
\ldots S, \\ \nonumber
     \quad    x_{l+S}^0 &= c_l - \sum_{j=1}^{S}R_{lj} \frac{\underline{c}}{S+1}
     \quad\textrm{for } l = 1, 2 \ldots L,
\end{align}
where $c_l$ is the finite capacity for link $l$, $\underline{c}$ is
the minimum (nonzero) link capacity, $S$ is the total number of
sources in the network, and $R$ is routing matrix [cf. Eq.
(\ref{eq:Rmatrix})].

\subsection{Iterative Update Rule}
Given an initial feasible vector $x^0$, the algorithm generates the iterates by
\begin{align}
 x^{k+1} =  x^k + d^k \Delta   x^k,\label{eq:primalUpdate}
\end{align}
where $d^k$ is a positive stepsize,  $\Delta x^k$ is the (primal)
Newton direction given as the solution of the following system of
linear equations:\footnote{This is a primal-dual method with the
vectors $\Delta x^k$ and $w^k$ acting as primal direction and dual
variables respectively}.
\begin{align}\label{eq:newtonUpdate}
\left( \begin{array}{cc}
\nabla^2 f({x}^k)& A' \\
A & 0
\end{array}
\right)
\left(\begin{array}{c}
\Delta  x^k\\
w^k
\end{array}
\right) =
-\left(\begin{array}{c}
\nabla f({x}^k)\\
0
\end{array}
\right).
\end{align}
We will refer to $x^k$ as the {\it primal vector} and $w^k$ as the
{\it dual vector} (and their components as primal and dual variables
respectively). We also refer to $w^k$ as the {\it price vector}
since the dual variables $[w_l^k]_{l\in\mathcal{L}}$ associated with
the link capacity constraints can be viewed as prices for using
links. For notational convenience, we will use $H_k = \nabla^2
f({x}^k)$ to denote the Hessian matrix in the rest of the paper.

Solving for $\Delta x^k$ and $w^k$ in the preceding system yields
\begin{align}
&\Delta   x^k = -H_k^{-1}(\nabla f(  x^k) + A'w^k), \label{eq:primalDirection}\\
&(AH_k^{-1}A')w^k = -AH_k^{-1}\nabla f(  x^k).\label{eq:dualUpdateBackground}
\end{align}

This system has a unique solution for all $k$. To see this, note
that the matrix $H_k$ is a diagonal matrix with entries
\begin{align}\label{eq:Hdiag}
(H_k)_{ii}= \left\{
\begin{array}{ccc}
-\frac{\partial^2 U_i( x_i^k)}{\partial  x_i^2}+\frac{\mu}{ (x_i^k)^2} \quad&  1 \leq i \leq S,\\
\frac{\mu}{ (x_i^k)^2}  \quad&  S+1 \leq i \leq S+L.
\end{array}
\right.
\end{align}
By Assumption \ref{asmp:utility}, the functions $U_i$ are strictly
concave, which implies $\frac{\partial^2 U_i( x_i^k)}{\partial
x_i^2}\leq 0$. Moreover, the primal vector $x^k$ is bounded (since
the method maintains feasibility) and, as we shall see in Section
\ref{sec:stepsize}, can be guaranteed to remain strictly positive by
proper choice of stepsize. Therefore, the entries $(H_k)_{ii}>0$ and
are well-defined for all $i$, implying that the Hessian matrix $H_k$ is
invertible. Due to the structure of $A$ [cf.\ Eq.\ (\ref{defA})],
the column span of $A$ is the entire space $\mathbb{R}^L$, and hence
the matrix $AH_k^{-1}A'$ is also invertible.\footnote{If for some
$x\in \mathbb{R}^L$, we have $AH_k^{-1}A'x=0$, then
$x'AH_k^{-1}A'x=\norm{H_k^{-\frac{1}{2}}A'x}_2=0$, which implies
$\norm{A'x}_2=0$, because the matrix $H$ is invertible. The rows of
the matrix $A'$ span $\mathbb{R}^L$, therefore we have $x=0$. This
shows that the matrix $AH_k^{-1}A'$ is invertible.} This shows that
the preceding system of linear equations can be solved uniquely for
all $k$.

The objective function $f$ is separable in $x_i$, therefore given
the vector $w^k_l$ for $l$ in $L(i)$, the Newton direction $\Delta
x^k_i$ can be computed by each source $i$  using local information
available to that source. However, the computation of the vector
$w^k$ at a given primal solution $x^k$ cannot be implemented in a
decentralized manner since the evaluation of the matrix inverse
$(AH_k^{-1}A')^{-1}$ requires global information. The following
section provides a distributed inexact Newton method, based on
computing the vector $w^k$ using a decentralized iterative scheme.

\section{Distributed Inexact Newton Method}\label{sec:inex}

In this section, we introduce a distributed Newton method using
ideas from matrix splitting in order to compute the dual vector
$w^k$ at each $k$ using an iterative scheme. Before proceeding to
present the details of the algorithm, we first introduce some
preliminaries on matrix splitting.

\subsection{Preliminaries on Matrix Splitting}\label{sec:matrixSplitting}
Matrix splitting can be used to solve a system of linear equations
given by \alignShort{Gy=a,} where $G$ is an $n\times n$ matrix and
$a$ is an $n$-dimensional vector. Suppose that the matrix $G$ can be
expressed as the sum of an invertible matrix $M$ and a matrix $N$,
i.e., \be\label{eq:matrixsum}G = M+N.\ee Let $y_0$ be an arbitrary
$n$-dimensional vector. A sequence $\{y^k\}$ can be generated by the
following iteration: \be\label{eq:matrixSplitting} y^{k+1} =
-M^{-1}Ny^k+M^{-1}a . \ee It can be seen that the sequence $\{y^k\}$
converges as $k\to\infty$ if and only if the spectral radius of the
matrix $M^{-1}N$ is strictly bounded above by 1. When the sequence
$\{y^k\}$ converges, its limit $y^*$ solves the original linear
system, i.e., $Gy^* =a$ (see \cite{Berman} and \cite{Cottle} for
more details). Hence, the key to solving the linear equation via
matrix splitting is the bound on the spectral radius of the matrix
$M^{-1}N$. Such a bound can be obtained using the following result
(see Theorem 2.5.3 from \cite{Cottle}).
\begin{theorem}\label{thm:splitting} Let $G$ be a real symmetric matrix.
Let $M$ and $N$ be matrices such that $G=M+N$ and assume that $M$ is
invertible and both matrices $M+N$ and $M-N$ are positive definite.
Then the spectral radius of $M^{-1}N$, denoted by $\rho(M^{-1}N)$,
satisfies $\rho(M^{-1}N)<1$.
\end{theorem}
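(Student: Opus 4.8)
The plan is to diagonalize the similar matrix and reduce the spectral radius bound to a statement about eigenvalues of a congruence-transformed operator. Since $M$ is invertible and $G = M+N$ is symmetric positive definite, the matrix $M^{-1}N$ is similar to $M^{-1/2} N M^{-1/2}$ is not quite right because $M$ need not be symmetric; instead I would work with the fact that $M^{-1}N = M^{-1}(G - M) = M^{-1}G - I$, so every eigenvalue $\lambda$ of $M^{-1}N$ has the form $\lambda = \nu - 1$ where $\nu$ is an eigenvalue of $M^{-1}G$. Hence $\rho(M^{-1}N) < 1$ is equivalent to showing that every eigenvalue $\nu$ of $M^{-1}G$ satisfies $0 < \nu < 2$.

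First I would establish that the eigenvalues of $M^{-1}G$ are real and positive. Although $M^{-1}G$ is not symmetric, for an eigenpair $M^{-1}G v = \nu v$ with $v \neq 0$ we have $Gv = \nu M v$, a generalized eigenvalue problem for the pair $(G, M)$. I would like $M$ to be symmetric so that this is a symmetric-definite pencil; since $G = M+N$ is symmetric and $M-N$ is symmetric (both hypothesized positive definite), their average $\tfrac12\big((M+N)+(M-N)\big) = M$ is symmetric, and consequently $N$ is symmetric too. Now $M$ symmetric positive definite? Not assumed directly — but note $M = \tfrac12\big((M+N)+(M-N)\big)$ is the average of two positive definite matrices, hence $M \succ 0$. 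So the pencil $(G,M)$ is symmetric with $M \succ 0$, which guarantees all generalized eigenvalues $\nu$ are real; taking $v^\top G v = \nu\, v^\top M v$ with $v^\top M v > 0$ and $v^\top G v > 0$ (as $G \succ 0$) forces $\nu > 0$.

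Next I would get the upper bound $\nu < 2$. From $Gv = \nu M v$ write $Nv = Gv - Mv = (\nu-1)Mv$, so $(M-N)v = Mv - Nv = Mv - (\nu-1)Mv = (2-\nu)Mv$. Pairing with $v$: $v^\top (M-N) v = (2-\nu)\, v^\top M v$. Since $M - N \succ 0$ the left side is strictly positive, and since $v^\top M v > 0$, we conclude $2 - \nu > 0$, i.e., $\nu < 2$. Combining the two bounds, $0 < \nu < 2$, hence $|\lambda| = |\nu - 1| < 1$ for every eigenvalue $\lambda$ of $M^{-1}N$, which is exactly $\rho(M^{-1}N) < 1$.

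The only subtle point — and the step I would be most careful about — is the reduction to a symmetric-definite pencil: one must not assume $M$ is symmetric a priori, but derive symmetry of $M$ (and hence of $N$) from the symmetry of $G=M+N$ together with the positive definiteness of $M+N$ and $M-N$ (positive definite matrices being symmetric by the convention in use), and then observe $M = \tfrac12[(M+N)+(M-N)] \succ 0$. Once that is in place, the real-ness of the eigenvalues and the two Rayleigh-quotient inequalities are routine. A minor alternative, avoiding the pencil language entirely, is to substitute $v = M^{-1/2} u$ (legitimate now that $M \succ 0$) and argue directly that $M^{-1/2} N M^{-1/2}$ is symmetric with all eigenvalues in $(-1,1)$ because $I \pm M^{-1/2} N M^{-1/2} = M^{-1/2}(M \pm N) M^{-1/2} \succ 0$; this is perhaps the cleanest route and is the one I would ultimately write up.
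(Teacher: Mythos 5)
Your proof is correct. Note that the paper does not actually prove this statement---it cites it directly as Theorem~2.5.3 of Cottle, Pang, and Stone---so there is no in-paper proof to compare against; your argument supplies the standard one. You correctly flag the one genuinely delicate point: $M$ is not hypothesized symmetric, but symmetry (and positive definiteness) of $M$ follows from $M = \tfrac12[(M+N)+(M-N)]$ once one takes ``positive definite'' to include symmetry, as the paper clearly does. With that in hand, your closing observation is the cleanest formulation and is what most references give: $M^{-1}N$ is similar to the symmetric matrix $M^{-1/2}NM^{-1/2}$, and
$I \pm M^{-1/2}NM^{-1/2} = M^{-1/2}(M\pm N)M^{-1/2} \succ 0$
pins every eigenvalue of $M^{-1/2}NM^{-1/2}$ in the open interval $(-1,1)$, giving $\rho(M^{-1}N)<1$. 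The generalized-eigenvalue/pencil detour in the middle of your write-up is correct but redundant once you have the conjugation argument; I would present only the latter.
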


By the above theorem, if $G$ is a real, symmetric, positive
definite matrix and $M$ is a nonsingular matrix, then one sufficient condition for the iteration
(\ref{eq:matrixSplitting}) to converge is that the matrix $M-N$ is
positive definite. This can be guaranteed using Gershgorin Circle
Theorem, which we introduce next (see \cite{Gershgorin} for more
details).

\begin{theorem}(Gershgorin Circle Theorem)\label{thm:gershgorin}
Let $G$ be an $n\times n$ matrix, and define $r_i(G) = \sum_{j \neq
i} |G_{ij}|$. Then, each eigenvalue of $G$ lies in one of the
Gershgorin sets $\{\Gamma_i\}$, with $\Gamma_i$ defined as disks in
the complex plane, i.e., \be \Gamma_i = \{z\in \mathbb{C} \mid |z-G_{ii}|
\leq r_i(G)\}. \nonumber \ee
\end{theorem}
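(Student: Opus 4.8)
The plan is to argue directly from the eigenvalue equation, exploiting the coordinate of an eigenvector that attains the largest modulus. Let $\lambda\in\mathbb{C}$ be an arbitrary eigenvalue of $G$ and let $v\in\mathbb{C}^n$, $v\neq 0$, be a corresponding eigenvector, so that $Gv=\lambda v$. Since $v$ is nonzero, I would fix an index $i\in\{1,\dots,n\}$ with $|v_i|=\max_{1\le j\le n}|v_j|>0$; this selection of the ``dominant'' coordinate is the one idea the proof really rests on.

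First I would read off the $i$-th scalar equation of $Gv=\lambda v$, namely $\sum_{j=1}^n G_{ij}v_j=\lambda v_i$, and move the diagonal term to the other side to obtain $(\lambda-G_{ii})v_i=\sum_{j\neq i}G_{ij}v_j$. Taking absolute values and applying the triangle inequality gives $|\lambda-G_{ii}|\,|v_i|\le\sum_{j\neq i}|G_{ij}|\,|v_j|$. Then I would invoke the maximality of $|v_i|$, i.e.\ $|v_j|\le|v_i|$ for all $j$, to bound the right-hand side by $\big(\sum_{j\neq i}|G_{ij}|\big)|v_i|=r_i(G)\,|v_i|$.

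Finally, since $|v_i|>0$, I would divide through by $|v_i|$ to conclude $|\lambda-G_{ii}|\le r_i(G)$, that is, $\lambda\in\Gamma_i$. As $\lambda$ was an arbitrary eigenvalue, every eigenvalue of $G$ lies in some Gershgorin disk $\Gamma_i$, which is the claimed statement. The hard part will be essentially nonexistent: the only ``trick'' is choosing the index at which the eigenvector attains its maximum modulus, after which the argument is just the triangle inequality; in particular the proof uses neither symmetry of $G$ nor any positivity, and it goes through verbatim for complex matrices.
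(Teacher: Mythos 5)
Your proof is correct and is the classical argument for Gershgorin's theorem: pick the coordinate of an eigenvector with maximal modulus, isolate the diagonal term in the corresponding row of $Gv=\lambda v$, and apply the triangle inequality. The paper itself does not prove this result---it only states it and cites \cite{Gershgorin}---so there is no in-paper proof to compare against; your self-contained argument is exactly the standard one found in that reference.
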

One corollary of the above theorem is that if a matrix G is strictly
diagonally dominant, i.e., $|G_{ii}|> \sum_{j \neq i} |G_{ij}|$, and
$G_{ii}>0$ for all $i$, then the real parts of all the eigenvalues
lie in the positive half of the real line, and thus the matrix is
positive definite. Hence a sufficient condition for the matrix $M-N$
to be positive definite is that $M-N$ is strictly diagonally
dominant with strictly positive diagonal entries.

\subsection{Distributed Computation of the Dual Vector}\label{sec:dual}
We use the matrix splitting scheme introduced in the preceding
section to compute the dual vector $w^k$ in Eq.\
(\ref{eq:dualUpdateBackground}) in a distributed manner. Let $D_k$
be a diagonal matrix, with diagonal entries \be\label{eq:defD}
(D_k)_{ll}=(AH_k^{-1}A')_{ll},\ee and matrix $B_k$ be given by
\be\label{eq:defB} B_k = AH_k^{-1}A' - D_k.\ee Let matrix
$\bar{B}_k$ be a diagonal matrix, with diagonal entries
\be\label{eq:defBbar} (\bar{B}_k)_{ii} = \sum_{j=1}^{L}
(B_k)_{ij}.\ee By splitting the matrix $AH_k^{-1}A'$ as the sum of
$D_k+\bar{B}_k$ and $B_k-\bar{B}_k$, we obtain the following result.

\begin{theorem}\label{thm:dualSplitting}
For a given $k>0$, let $D_k$, $B_k$, $\bar B_k$ be the
matrices defined in Eqs.\ (\ref{eq:defD}), (\ref{eq:defB}) and
(\ref{eq:defBbar}). Let $w(0)$ be an arbitrary initial vector and
consider the sequence $\{w(t)\}$ generated by the iteration
\be\label{eq:dualInnerIteration} w(t+1) =
(D_k+\bar{B}_k)^{-1}(\bar{B}_k-B_k)w(t)+(D_k+\bar{B}_k)^{-1}(-AH_k^{-1}\nabla
f(x^k)), \ee for all $t\geq0$. Then the spectral radius of the
matrix $(D_k+\bar{B}_k)^{-1}(B_k-\bar{B}_k)$ is strictly bounded
above by $1$ and the sequence $\{w(t)\}$ converges as $t\to\infty$,
and its limit is the solution to Eq.\
(\ref{eq:dualUpdateBackground}).
\end{theorem}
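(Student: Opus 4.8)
The plan is to apply Theorem \ref{thm:splitting} with the identification $G = AH_k^{-1}A'$, $M = D_k + \bar B_k$, and $N = B_k - \bar B_k$. First I would check the decomposition: since $B_k = AH_k^{-1}A' - D_k$ by \eqref{eq:defB}, we have $M + N = (D_k + \bar B_k) + (B_k - \bar B_k) = D_k + B_k = AH_k^{-1}A' = G$, so the splitting is valid. The matrix $G$ is real symmetric (it is $A H_k^{-1} A'$ with $H_k$ diagonal and positive definite) and, as already established in the excerpt, positive definite and invertible; hence $M + N = G$ is positive definite. It remains to show (i) that $M = D_k + \bar B_k$ is invertible, and (ii) that $M - N = (D_k + \bar B_k) - (B_k - \bar B_k) = D_k + 2\bar B_k - B_k$ is positive definite. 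Given (i) and (ii), Theorem \ref{thm:splitting} yields $\rho(M^{-1}N) < 1$, and then the general matrix-splitting convergence fact quoted around \eqref{eq:matrixSplitting} gives that $\{w(t)\}$ from \eqref{eq:dualInnerIteration} converges to the unique solution of $G w = -AH_k^{-1}\nabla f(x^k)$, which is exactly \eqref{eq:dualUpdateBackground}.

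For (ii) I would use the Gershgorin-based sufficient condition highlighted after Theorem \ref{thm:gershgorin}: it suffices that $M - N$ is strictly (or at least weakly, with the extra structure) diagonally dominant with positive diagonal entries. The $i$-th diagonal entry of $M - N$ is $(D_k)_{ii} + 2(\bar B_k)_{ii} - (B_k)_{ii}$; since $B_k$ has zero diagonal (it is $AH_k^{-1}A' - D_k$), this equals $(D_k)_{ii} + 2(\bar B_k)_{ii} = (D_k)_{ii} + 2\sum_{j}(B_k)_{ij}$. The off-diagonal entries of $M - N$ are $-(B_k)_{ij}$ for $j \neq i$. The key structural observations are that $(D_k)_{ll} = (AH_k^{-1}A')_{ll} > 0$ since $H_k^{-1}$ is positive definite and $A$ has full row rank, and that the off-diagonal entries $(B_k)_{ij} = (AH_k^{-1}A')_{ij} = \sum_{m} A_{im} (H_k^{-1})_{mm} A_{jm}$ are nonnegative because all entries of $A = [R\ \ I(L)]$ are in $\{0,1\}$ and $H_k$ is diagonal with positive entries. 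Hence $(\bar B_k)_{ii} = \sum_j (B_k)_{ij} \geq 0$ and $|(B_k)_{ij}| = (B_k)_{ij}$. The row sum of absolute values of off-diagonal entries of $M-N$ is then $\sum_{j \neq i}(B_k)_{ij} = (\bar B_k)_{ii}$, while the diagonal entry is $(D_k)_{ii} + 2(\bar B_k)_{ii} \geq (D_k)_{ii} + (\bar B_k)_{ii} > (\bar B_k)_{ii}$ because $(D_k)_{ii} > 0$. So $M - N$ is strictly diagonally dominant with strictly positive diagonal, hence positive definite. For (i), the same computation shows $M = D_k + \bar B_k$ is diagonal (both $D_k$ and $\bar B_k$ are diagonal) with strictly positive diagonal entries $(D_k)_{ii} + (\bar B_k)_{ii} > 0$, so $M$ is trivially invertible; this also makes the iteration \eqref{eq:dualInnerIteration} well-defined and cheap, since $M^{-1}$ is just entrywise reciprocal.

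The main obstacle, and the point that needs the most care, is verifying nonnegativity of the off-diagonal entries of $AH_k^{-1}A'$ and hence that $\bar B_k \geq 0$ and $|(B_k)_{ij}| = (B_k)_{ij}$ — everything downstream (the Gershgorin argument, the identification of $|G_{ij}|$ with $G_{ij}$) hinges on the sign structure coming from $A$ being a $0$–$1$ matrix and $H_k$ being diagonal with positive entries. Once that sign structure is pinned down, the diagonal dominance of $M - N$ is immediate because the coefficient $2$ in front of $\bar B_k$ leaves a strict surplus of $(D_k)_{ii} + (\bar B_k)_{ii}$ over the off-diagonal row sum $(\bar B_k)_{ii}$. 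A secondary, more cosmetic point is to confirm that the iteration written in \eqref{eq:dualInnerIteration} is indeed the matrix-splitting iteration \eqref{eq:matrixSplitting} for this $M$, $N$: expanding, $w(t+1) = -M^{-1}N w(t) + M^{-1}a$ with $N = B_k - \bar B_k$ gives $-M^{-1}(B_k - \bar B_k) = M^{-1}(\bar B_k - B_k)$ and $a = -AH_k^{-1}\nabla f(x^k)$, matching \eqref{eq:dualInnerIteration} exactly; then the convergence-and-limit conclusion is the standard consequence of $\rho(M^{-1}N) < 1$ together with the fact (already recalled in the excerpt) that the limit solves $Gw = a$.
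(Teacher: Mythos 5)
Your proof is correct and follows essentially the same route as the paper's: split $AH_k^{-1}A'$ as $(D_k+\bar B_k)+(B_k-\bar B_k)$, verify $M-N=D_k+2\bar B_k-B_k$ is strictly diagonally dominant with positive diagonal (hence positive definite by Gershgorin), and invoke Theorem \ref{thm:splitting} to get $\rho(M^{-1}N)<1$. You fill in some details the paper leaves implicit—notably the entrywise nonnegativity of $AH_k^{-1}A'$ coming from $A$ being $0$--$1$ and $H_k^{-1}$ being positive diagonal—but the argument is the same.
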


\begin{proof}
We split the matrix $AH_k^{-1}A'$ as \be\label{eq:splitting}
(AH_k^{-1}A') = (D_k+\bar{B}_k) + (B_k-\bar{B}_k) \ee and use the
iterative scheme presented in Eqs.\ (\ref{eq:matrixsum}) and
(\ref{eq:matrixSplitting}) to solve Eq.\
(\ref{eq:dualUpdateBackground}). For all $k$, both the real matrix
$H_k$ and its inverse, $H_k^{-1}$, are positive definite and
diagonal. The matrix $A$ has full row rank and is element-wise
nonnegative. Therefore the product $AH_k^{-1}A'$ is real, symmetric,
element-wise nonnegative and positive definite. We let
\be\label{eq:Qmatrix}Q_k= (D_k+\bar{B}_k) - (B_k-\bar{B}_k) =
D_k+2\bar{B}_k - B_k\ee denote the difference matrix. By definition
of $\bar B_k$ [cf.\ Eq.\ (\ref{eq:defBbar})], the matrix
$2\bar{B}_k-B_k$ is diagonally dominant, with nonnegative diagonal
entries. Moreover, due to strict positivity of the second
derivatives of the logarithmic barrier functions, we have
$(D_k)_{ii}>0$ for all $i$. Therefore the matrix $Q_k$ is strictly
diagonally dominant. By Theorem \ref{thm:gershgorin}, such matrices
are positive definite. Therefore, by Theorem \ref{thm:splitting},
the spectral radius of the matrix
$(D_k+\bar{B}_k)^{-1}(B_k-\bar{B}_k)$ is strictly bounded above by
$1$. Hence the splitting scheme (\ref{eq:splitting}) guarantees the
sequence $\{w(t)\}$ generated by iteration
(\ref{eq:dualInnerIteration}) to converge to the solution of Eq.\
(\ref{eq:dualUpdateBackground}).
\end{proof}

This provides an iterative scheme to compute the dual vector $w^k$
at each primal iteration $k$ using an iterative scheme. We will
refer to the iterative scheme defined in Eq.\
(\ref{eq:dualInnerIteration}) as the {\it dual iteration}.

There are many ways to split the matrix $AH_k^{-1}A'$. The
particular one in Eq.\ (\ref{eq:splitting}) is chosen here due to
three desirable features. First it guarantees that the difference
matrix $Q_k$ [cf. Eq.\ (\ref{eq:Qmatrix})] is strictly diagonally
dominant, and hence ensures convergence of the sequence $\{w(t)\}$.
Second, with this splitting scheme, the matrix $D_k+\bar{B}_k$ is
diagonal, which eliminates the need for global information when
calculating its inverse. The third feature enables us to study
convergence rate of iteration (\ref{eq:dualInnerIteration}) in terms
of a dual (routing) graph which we introduce next.

\begin{definition}\label{def:dualGraph}
Consider a network $\mathcal{G}=\{\mathcal{L},\mathcal{S}\}$,
represented by a set $\mathcal{L}= \{1, ..., L\}$ of (directed)
links, and a set $\mathcal{S} = \{1, ..., S\}$ of sources. The links
form a strongly connected graph, and each source sends information
along a predetermined route. The {\it weighted dual (routing) graph}
$\tilde{\mathcal{G}} = \{\tilde{\mathcal{N}},\tilde{\mathcal{L}}\}$,
where $\tilde{\mathcal{N}}$ is the set of nodes, and
$\tilde{\mathcal{L}}$ is the set of (directed) links defined by:\\
\textrm{A.} $\tilde{\mathcal{N}} = \mathcal{L}$;\\
\textrm{B.} A link is present between node $L_i$ to $L_j$ in $\tilde{\mathcal{G}}$ if and only if
there is some common flow between $L_i$ and $L_j$ in $\mathcal{G}$.\\
\textrm{C.} The weight $\tilde W_{ij}$ on the link from node $L_i$
to $L_j$ is given by
\[\tilde W_{ij}=(D_k+\bar{B}_k)^{-1}_{ii}(B_k)_{ij}=(D_k+\bar{B}_k)^{-1}_{ii}(AH_k^{-1}A')_{ij}=(D_k+\bar{B}_k)^{-1}_{ii}\sum_{s\in
S(i)\cap S(j)}H_{ss}^{-1},\] where the matrices $D_k$, $B_k$, and
$\bar{B}_k$ are defined in Eqs.\ (\ref{eq:defD}), (\ref{eq:defB})
and (\ref{eq:defBbar}).
\end{definition}

One example of a network and its dual graph are presented in Figures
\ref{fig:withFlow} and \ref{fig:dual}. Note that the unweighted
indegree and outdegree of a node are the same in the dual graph,
however the weights are different depending on the direction of the
links. The splitting scheme in Eq.\ (\ref{eq:splitting}) involves the
matrix $(D_k+\bar{B}_k)^{-1}(\bar{B}_k-B_k)$, which is the weighted
Laplacian matrix of the dual graph.\footnote{We adopt the following
definition for the weighted Laplacian matrix of a graph. Consider a
weighted directed graph $\mathcal{G}$ with weight $W_{ij}$
associated with the link from node $i$ to $j$. We let $W_{ij}= 0$
whenever the link is not present. These weights form a
\textit{weighted adjacency matrix }$W$. The \textit{weighted
out-degree matrix} $D$ is defined as a diagonal matrix with
$D_{ii}=\sum_{j}W_{ij}$ and the \textit{weighted Laplacian matrix}
$L$ is defined as $L = D-W$. See \cite{graphTheory},
\cite{ChungGraphBook} for more details on graph Laplacian matrices.}
The weighted out-degree of node $i$ in the dual graph, i.e., the
diagonal entry $(D_k+\bar{B}_k)^{-1}_{ii}\bar B_{ii}$ of the
Laplacian matrix, can be viewed as a measure of the {\it congestion
level} of a link in the original network since the neighbors in the
dual graph represent links that share flows in the original network.
We show in Section \ref{sec:ConvDual1} that the spectral properties
of the Laplacian matrix of the dual graph dictate the convergence
speed of dual iteration (\ref{eq:dualInnerIteration}).

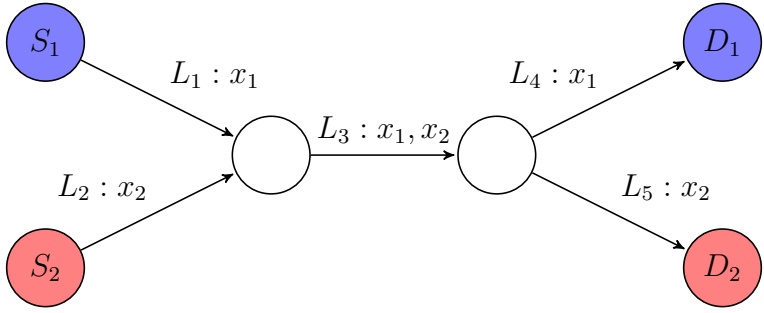
\begin{figure}
\begin{center}\vspace{-1cm}
\begin{tikzpicture}[-,>=stealth',shorten >=1pt,auto,node distance=2.0cm, semithick, scale = 0.75]
    \tikzstyle{every state}=[draw=black,text=black]
    \tikzstyle{LabelStyle}=[fill=white,sloped]
    \tikzstyle{EdgeStyle}=[bend left]
    \node[state][fill=blue!50]      at(0,0) (1) {$S_1$};
    \node[state][fill=red!50] at(0,-4) (2) {$S_2$};
    \node[state][fill=white] at(4,-2)(3){} ;
    \node[state][fill=white] at (8,-2) (4) {};
    \node[state][fill=blue!50] at (12,0) (5) {$D_1$};
    \node[state][fill = red!50] at (12,-4) (6) {$D_2$};
    \path
    (1) edge [->] node {$L_1: x_1$} (3)
    (2) edge [->] node {$L_2: x_2$} (3)
    (3) edge [->] node {$L_3: x_1,x_2$} (4)
    (4) edge[->] node  {$L_4: x_1$} (5)
    (4) edge [->] node {$L_5: x_2$} (6)
    ;
\end{tikzpicture}
\caption{A sample network. Each source-destination pair is displayed with the same color. We use $x_i$ to denote the flow corresponding to the $i^{th}$ source-destination pair and $L_i$ to denote the $i^{th}$ link.}\label{fig:withFlow}
\end{center}
\end{figure}

\begin{figure}\begin{center}\vspace{-1cm}
\begin{tikzpicture}[-,>=stealth',shorten >=1pt,auto,node distance=2.0cm, semithick,scale = 0.65]
    \tikzstyle{every state}=[draw=black,text=black]
    \tikzstyle{LabelStyle}=[fill=white,sloped]
    \node[state][fill=white] at(-8,0) (1) {$L_1$};
    \node[state][fill=white] at(-8,-8) (2) {$L_2$};
    \node[state][fill=white] at(0,-4)(3){$L_3$} ;
    \node[state][fill=white] at (8,0) (4) {$L_4$};
    \node[state][fill=white] at (8,-8) (5) {$L_5$};
    \path
    (1) edge [->,bend left=10] node {$x_1$} (3)
    (2) edge [->,bend left=10] node {$x_2$} (3)
    (3) edge [->,bend left=10] node{$x_1$} (4)
    (3) edge[->,bend left=10] node {$x_2$} (5)
    (1) edge [->,bend left=10] node {$x_1$} (4)
    (2) edge[->,bend left=10]node {$x_2$}(5)
    (1) edge [<-,bend right=10] node {$x_1$} (3)
    (2) edge [<-,bend right=10] node {$x_2$} (3)
    (3) edge [<-,bend right=10] node{$x_1$} (4)
    (3) edge[<-,bend right=10] node {$x_2$} (5)
    (1) edge [<-,bend right=10] node {$x_1$} (4)
    (2) edge[<-,bend right=10]node {$x_2$}(5)

    ;
\end{tikzpicture}\end{center}
\caption{Dual graph for the network in Figure \ref{fig:withFlow}, each link in this graph corresponds to the flows shared between the links in the original network. }\label{fig:dual}
\end{figure}
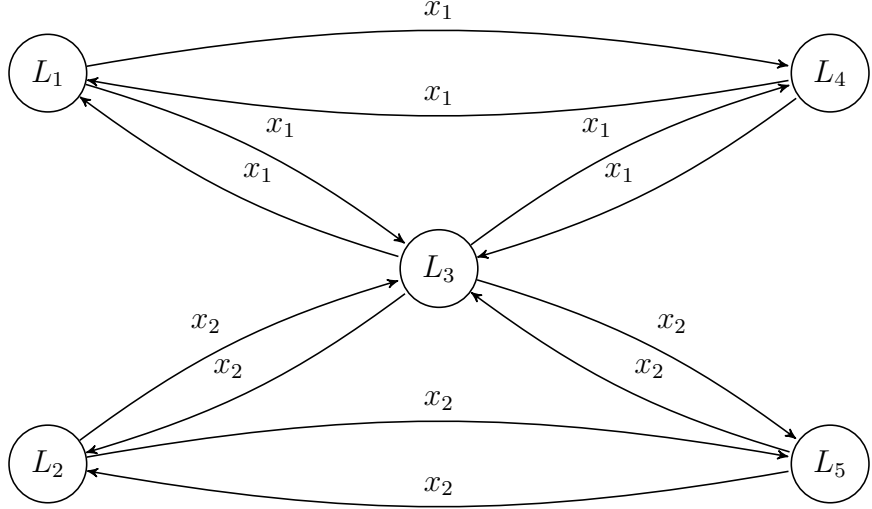

We next rewrite iteration (\ref{eq:dualInnerIteration}), analyze the
information exchange required to implement it and develop a
distributed computation procedure to calculate the dual vector. For
notational convenience, we define the \textit{price of the route}
for source $i$, $\pi_i(t)$, as the sum of the dual variables
associated with links used by source $i$ at the $t^{th}$ dual
iteration, i.e., $\pi_i (t) = \sum _{l\in L(i)} w_l(t)$. Similarly,
we define the \textit{weighted price of the route} for source $i$,
$\Pi_i(t)$, as the price of the route for source $i$ weighted by the
$i$th diagonal element of the inverse Hessian matrix, i.e.,
$\Pi_i(t) = (H_k^{-1})_{ii}\sum _{l\in L(i)} w_l(t)$.

\begin{lemma}\label{lemma:dualCorrect}
For each primal iteration $k$, the dual iteration
(\ref{eq:dualInnerIteration}) can be written as
\begin{align}  \label{eq:wIteAlg}
w_l(t+1) =& \frac{1}{  (H_k)^{-1}_{(S+l)(S+l)}+\sum_{i \in S(l)}
\Pi_i(0)}\Big(\Big(\sum_{i \in S(l)} \Pi_i(0) - \sum_{i\in S(l)}
(H_k)_{ii}^{-1}\Big)w_l(t)-\sum_{i \in S(l)} \Pi_i(t)\\ \nonumber &
+ \sum_{i\in S(l)} (H_k)_{ii}^{-1}w_l(t)
    -\sum_{i\in S(l)} (H_k^{-1})_{ii}\nabla_i f(x^k) - (H_k^{-1})_{(S+l)(S+l)}\nabla_{S+l} f(x^k)\Big),\end{align}
    where $\Pi_i(0)$ is the weighted price of the route for source $i$ when $w(0)=[1,1 \ldots, 1]'$.
\end{lemma}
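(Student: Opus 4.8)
The plan is to expand the dual iteration (\ref{eq:dualInnerIteration}) one component at a time, exploiting the sparsity of $A=[R\ I(L)]$ and the fact that $H_k$, $D_k$ and $D_k+\bar B_k$ are all diagonal. Since $H_k$ is diagonal, for any links $l,j$ we have $(AH_k^{-1}A')_{lj}=\sum_{m=1}^{S+L}A_{lm}A_{jm}(H_k^{-1})_{mm}$; splitting this sum into the $R$-block ($m\le S$) and the identity block ($m=S+l'$) gives $(AH_k^{-1}A')_{ll}=(H_k^{-1})_{(S+l)(S+l)}+\sum_{i\in S(l)}(H_k^{-1})_{ii}$ and, for $l\neq j$, $(AH_k^{-1}A')_{lj}=\sum_{i\in S(l)\cap S(j)}(H_k^{-1})_{ii}$. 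This identifies $(D_k)_{ll}$ and the off-diagonal entries $(B_k)_{lj}$ [cf.\ Eqs.\ (\ref{eq:defD}), (\ref{eq:defB})]; summing the latter over $j$ and swapping the order of summation yields $(\bar B_k)_{ll}=\sum_{i\in S(l)}(|L(i)|-1)(H_k^{-1})_{ii}$ [cf.\ Eq.\ (\ref{eq:defBbar})], because each source $i\in S(l)$ contributes $(H_k^{-1})_{ii}$ once for every \emph{other} link on its route.

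Next I would assemble the right-hand side of (\ref{eq:dualInnerIteration}) componentwise. Adding the previous two expressions gives $(D_k+\bar B_k)_{ll}=(H_k^{-1})_{(S+l)(S+l)}+\sum_{i\in S(l)}|L(i)|(H_k^{-1})_{ii}$, and since $\Pi_i(0)=|L(i)|(H_k^{-1})_{ii}$ when $w(0)=e$, this is exactly the denominator appearing in (\ref{eq:wIteAlg}); thus the prefactor equals $(D_k+\bar B_k)^{-1}_{ll}$. For the term $(\bar B_k-B_k)w(t)$, its $l$-th entry is $(\bar B_k)_{ll}w_l(t)-\sum_{j\neq l}(B_k)_{lj}w_j(t)$, and rewriting $\sum_{j\neq l}\sum_{i\in S(l)\cap S(j)}(H_k^{-1})_{ii}w_j(t)=\sum_{i\in S(l)}(H_k^{-1})_{ii}\big(\pi_i(t)-w_l(t)\big)$ and collecting the $w_l(t)$ coefficients turns it into $\sum_{i\in S(l)}\Pi_i(0)\,w_l(t)-\sum_{i\in S(l)}\Pi_i(t)$, where $\pi_i(t)$ and $\Pi_i(t)$ are the route and weighted-route prices defined just before the lemma. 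Finally, the block decomposition of $A$ gives $[-AH_k^{-1}\nabla f(x^k)]_l=-\sum_{i\in S(l)}(H_k^{-1})_{ii}\nabla_i f(x^k)-(H_k^{-1})_{(S+l)(S+l)}\nabla_{S+l}f(x^k)$.

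Combining these three pieces and dividing by $(D_k+\bar B_k)_{ll}$ reproduces (\ref{eq:wIteAlg}): the extra pair of terms $+\sum_{i\in S(l)}(H_k)^{-1}_{ii}w_l(t)$ and $-\sum_{i\in S(l)}(H_k)^{-1}_{ii}w_l(t)$ inside the parentheses cancels (they are written separately only because it is convenient for the distributed/price-update interpretation in the following subsection), so the whole verification reduces to matching the coefficient $\sum_{i\in S(l)}\Pi_i(0)$ of $w_l(t)$ on both sides. I do not expect a real obstacle in this argument; the only place that needs care is the combinatorial bookkeeping---establishing $(\bar B_k)_{ll}=\sum_{i\in S(l)}(|L(i)|-1)(H_k^{-1})_{ii}$ and reindexing $\sum_{j\neq l}(B_k)_{lj}w_j(t)$ as a sum of route prices---so that every source--link incidence is counted exactly once and the $-w_l(t)$ correction from excluding $j=l$ is tracked consistently.
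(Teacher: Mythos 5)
Your proof is correct and follows essentially the same componentwise expansion as the paper: exploit the block structure $A=[R\ \ I(L)]$ and the diagonality of $H_k$ to reduce each entry of iteration (\ref{eq:dualInnerIteration}) to sums over $S(l)$ of the route-price quantities $\pi_i(t)$, $\Pi_i(t)$, and then match the resulting coefficients against (\ref{eq:wIteAlg}). The only organizational difference is that you obtain $(\bar B_k)_{ll}=\sum_{i\in S(l)}(|L(i)|-1)(H_k^{-1})_{ii}$ by explicitly swapping the $j$- and $i$-sums in $\sum_{j\neq l}(B_k)_{lj}$, whereas the paper extracts the same quantity by plugging $w(0)=e$ into $(AH_k^{-1}A'w(0))_l-(D_k)_{ll}$; both immediately reduce to $\sum_{i\in S(l)}\Pi_i(0)-\sum_{i\in S(l)}(H_k)^{-1}_{ii}$, so the arguments coincide.
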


\begin{proof}
Recall the definition of matrix $A$, i.e., $A_{li}=1$ for $i=1, 2
\ldots S$ if source $i$ uses link $l$, i.e., $i \in S(l)$, and
$A_{li}=0$ otherwise. Therefore, we can write the price of the route
for source $i$ as, $\pi_i(t) = \sum_{l=1}^L A_{li}w(t)_l =
[A']^iw(t)$. Similarly, since the Hessian matrix $H_k$ is diagonal,
the weighted price can be written as \be\label{eq:bigPiDef}{\Pi_i(t)
= (H_k)_{ii}^{-1}[A']^iw(t) = [H_k^{-1}A']^iw(t).} \ee On the other
hand, since $A = [R$ $I(L)]$, where $R$ is the routing matrix, we
have
\begin{align*}
(AH^{-1}_kA'w(t))_l&= \sum_{i=1}^{S} ([A]_i [H_k^{-1}A']^iw(t))_l + (H_k^{-1})_{(S+l)(S+l)}w_l(t)\\
&=  \sum_{i=1}^{S} A_{li} ( [H_k^{-1}A']^iw(t))+ (H_k^{-1})_{(S+l)(S+l)}w_l(t).
\end{align*}
Using the definition of the matrix $A$ one more time, this implies
\begin{align}\label{eq:usePi}(AH^{-1}_kA'w(t))_l&=\sum_{i \in S(l)}[H_k^{-1}A']^iw(t)+(H_k^{-1})_{(S+l)(S+l)}w_l(t)\\ \nonumber
&= \sum_{i \in S(l)} \Pi_i(t)
+(H_k^{-1})_{(S+l)(S+l)}w_l(t),\end{align} where the last equality
follows from Eq.\ (\ref{eq:bigPiDef}).

Using Eq.\ (\ref{eq:defB}), the above relation implies that
$((B_k+D_k)w(t))_l=\sum_{i \in S(l)} \Pi_i(t)
+(H_k^{-1})_{(S+l)(S+l)}w_l(t)$. We next rewrite $(\bar B_k)_{ll}$.
Using the fact that $w(0)=[1,1 \ldots, 1]'$, we have
\alignShort{(AH^{-1}_kA'w(0))_l=((B_k+D_k)w(0))_l = \sum_{j=1}^L
(B_k)_{lj}+(D_k)_{ll}.} Using the definition of $\bar B_k$ [cf.\
Eq.\ (\ref{eq:defBbar})], this implies \alignShort{ (\bar B_k)_{ll}
&= \sum_{j=1}^{L} (B_k)_{lj} = (AH^{-1}_kA'w(0))_l -(D_k)_{ll} \\&=
\sum_{i \in S(l)} \Pi_i(0) +(H_k^{-1})_{(S+l)(S+l)}- (D_k)_{ll}.}
This calculation can further be simplified using
\be\label{eq:defDDis} (D_k)_{ll}=(AH_k^{-1}A')_{ll} = \sum_{i\in
S(l)} (H_k)_{ii}^{-1} + (H_k)^{-1}_{(S+l)(S+l)}, \ee [cf.\ Eq.\
(\ref{eq:defD})], yielding \be\label{eq:bbarDist}(\bar
B_k)_{ll}=\sum_{i \in S(l)} \Pi_i(0) - \sum_{i\in S(l)}
(H_k)_{ii}^{-1}.\ee

Following the same argument, the value $(B_kw(t))_l$ for all $t$ can
be written as
\begin{align*}(B_kw(t))_l &= (AH^{-1}_kA'w(t))_l-(D_kw(t))_l \\ \nonumber
&= \sum_{i=1}^{S} \Pi_i(t) +(H_k^{-1})_{(S+l)(S+l)}w_l(t)- (D_k)_{ll}w_l(t) \\ \nonumber
&=  \sum_{i=1}^{S} \Pi_i(t) -\sum_{i\in S(l)} (H_k)_{ii}^{-1}w_l(t),\end{align*}
where the first equality follows from Eq.\ (\ref{eq:defBbar}), the second equality follows from Eq.\ (\ref{eq:usePi}), and the last equality follows from Eq.\ (\ref{eq:defDDis}).

Finally, we can write $(AH_k^{-1}\nabla f(x^k))_l$ as
\als{(AH_k^{-1}\nabla f(x^k))_l = \sum_{i\in S(l)}
(H_k^{-1})_{ii}\nabla_i f(x^k) + (H_k^{-1})_{(S+l)(S+l)}\nabla_{S+l}
f(x^k).} Substituting the preceding into
(\ref{eq:dualInnerIteration}), we obtain the desired iteration
(\ref{eq:wIteAlg}).
\end{proof}

We next analyze the information exchange required to implement
iteration (\ref{eq:wIteAlg}) among sources and links in the network.
We first observe the local information available to sources and
links. Each source $i$ knows the $i$th diagonal entry of the Hessian
$(H_k)_{ii}$ and the $i$th component of the gradient $\nabla_i
f(x^k)$. Similarly, each link $l$ knows the ($S+l$)th diagonal entry
of the Hessian $(H_k)_{S+l,S+l}$ and the ($S+l$)th component of the
gradient $\nabla_{S+l} f(x^k)$. In addition to the locally available
information, each link $l$, when executing iteration
(\ref{eq:wIteAlg}), needs to compute the terms:
\[\sum_{i\in S(l)} (H_k)^{-1}_{ii},
\qquad \sum_{i\in S(l)} (H_k^{-1})_{ii}\nabla_i f(x^k),\qquad
\sum_{i\in S(l)} \Pi_i(0),\qquad \sum_{i\in S(l)} \Pi_i(t).\] The
first two terms can be computed by link $l$ if each source sends its
local information to the links along its route ``once" in primal
iteration $k$. The third term can be computed by link $l$ again once
for every $k$ if the route price $\pi_i(0)$ (aggregated along the
links of a route when link prices are all equal to 1) are sent by
the destination to source $i$, which then evaluates and sends the
weighted price $\Pi_i(0)$ to the links along its route. The fourth
term can be computed with a similar feedback mechanism, however the
computation of this term needs to be repeated for every dual
iteration $t$.

The preceding information exchange suggests the following
distributed implementation of (\ref{eq:wIteAlg}) (at each primal
iteration $k$) among the sources and the links, where each source or link is viewed as a processor, information available at source $i$ can be passed to the links it traverses, i.e., $l\in L(i)$, and information about the links along a route can be aggregated and sent back to the corresponding source using a feedback mechanism:

\begin{itemize}
\item[1.] {\bf Initialization.}
 \begin{itemize}
\item[1.a] Each source $i$ sends its local information $(H_k)_{ii}$ and $\nabla_i f(x^k)$
to the links along its route, $l \in L(i)$. Each link $l$ computes
$(H_k)^{-1}_{(S+l)(S+l)}$, $\sum_{i\in S(l)}
(H_k)_{ii}^{-1}$,
$(H_k^{-1})_{(S+l)(S+l)}\nabla_{S+l}f(x^k) $ and $\sum_{i\in S(l)} (H_k^{-1})_{ii}\nabla_i
f(x^k)$.
\item[1.b] Each link $l$ starts with price $w_l(0)=1$. The link
prices $w_l(0)$ are aggregated along route $i$ to compute
$\pi(0)=\sum _{l\in L(i)} w_l(0)$ at the destination. This
information is sent back to source $i$.
\item[1.c] Each source computes the weighted price
$\Pi_i(0) = (H_k^{-1})_{ii}\sum _{l\in L(i)} w_l(0)$ and sends it to
the links along its route, $l\in L(i)$.
\item[1.d] Each link $l$ then initializes with arbitrary price $w_l(1)$.
\end{itemize}
\item[2.] {\bf Dual Iteration.}
\begin{itemize}
\item[2.a] The link prices $w_l(t)$ are updated using (\ref{eq:wIteAlg}) and aggregated along route $i$ to compute
$\pi(t)$ at the destination. This information is sent back to source
$i$.
\item[2.b] Each source computes the weighted price
$\Pi_i(t) $ and sends it to the links along its route, $l\in L(i)$.
\end{itemize}
\end{itemize}

The direction of information flow can be seen in Figures
\ref{fig:step1} and \ref{fig:step2}.

\begin{figure}
\begin{minipage}[b]{0.5\linewidth} 
\centering
\includegraphics[width=6cm]{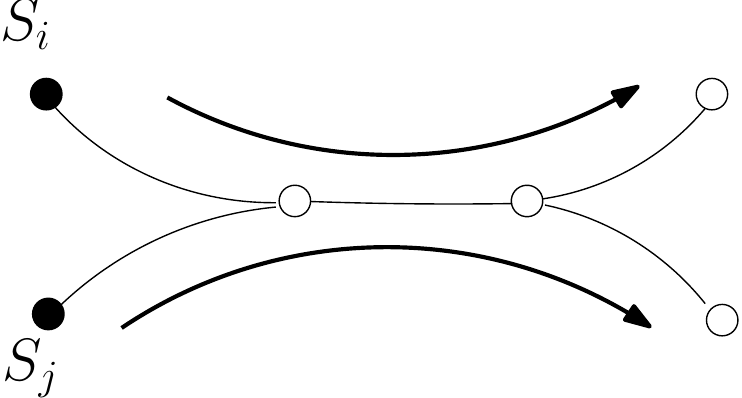}
\caption{Direction of information flow for the steps 1.a, 1.c and 2.b, from sources to the links they use.}\label{fig:step1}
\end{minipage}
\hspace{0.5cm} 
\begin{minipage}[b]{0.5\linewidth}
\centering
\includegraphics[width=6cm]{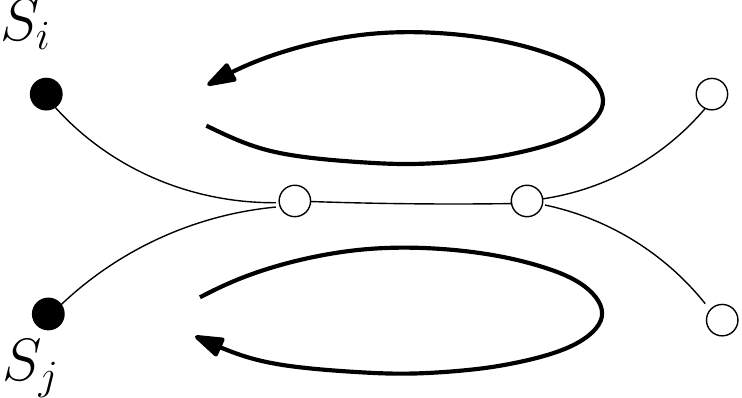}
\caption{Direction of flow for the steps 1.b and 2.a, from links to the sources using them.}\label{fig:step2}
\end{minipage}
\end{figure}

Note that the sources need to send their Hessian and gradient
information once per primal iteration since these values do not
change in the dual iterations. Moreover, this algorithm has
comparable level of information exchange with the subgradient based
algorithms applied to the NUM problem (\ref{ineqFormulation}) (see
\cite{Low}, \cite{Kelly}, \cite{LowLapsley}, \cite{AsuChapter} for
more details). In both types of algorithms, only the sum of prices
of links along a route is fed back to the source, and the links
update prices based on scalar information sent from sources using
that link. The computation here is slightly more involved since it
requires scaling by Hessian matrix entries, however all operations
are scalar-based, hence does not impose degradation on the
performance of the algorithm.

\subsection{Distributed Computation of the Primal Newton Direction}\label{sec:primal}
Once the dual variables are computed, the primal Newton direction
can be obtained according to Eq.\ (\ref{eq:primalDirection}) as
\be\label{eq:deltax} (\Delta  x^k)_i = -(H_k)_{ii}^{-1}(\nabla_i f(
x^k) + (A'w^k)_i) =  -(H_k)_{ii}^{-1}\nabla_i f(  x^k) + \Pi_i,\ee
where $\Pi_i$ is the weighted price of the route for source $i$
computed at termination of the dual iteration. Hence, the primal
Newton direction can be computed using local information by each
source. However, because the dual variable computation involves an
iterative scheme, the exact value for $w^k$ is not available.
Therefore, the direction $\Delta x^k$ computed using Eq.\
(\ref{eq:deltax}) may violate the equality constraints in problems
(\ref{eqFormulation}). To maintain feasibility of the generated
primal vectors, the calculation of the {\it inexact Newton
direction} at a primal vector $x^k$, which we denote by $\Delta
\tilde x^k$, is separated into two stages.

In the first stage, the first $S$ components of $\Delta \tilde x^k$, denoted by $\Delta \tilde s^k$,
is computed via Eq.\ (\ref{eq:deltax}) using the dual variables
obtained via the iterative scheme, i.e., \be\label{eq:ds}\Delta \tilde s_i^k =  -(H_k)_{ii}^{-1}(\nabla_i f(
x^k)+[R']^iw^k).\ee In the second stage, the last $L$
components of $\Delta \tilde x^k$ (corresponding to the slack
variables) are computed to ensure that the condition
$A\Delta \tilde x^k = 0$ is satisfied, i.e. \be\label{eq:dxPrimal}\Delta \tilde x^k = \left(\begin{array}{c}\Delta\tilde s^k\\-R\Delta\tilde s^k\end{array}\right).\ee This calculation
involves each link computing the slack introduced by the first $S$
components of $\Delta \tilde x^k$.

The algorithm presented generates the primal vectors as follows: Let
$x^0$ be an initial strictly positive feasible primal vector (see
Eq.\ (\ref{init}) for one possible choice). For any $k\ge 0$, we
have \be\label{eq:inexaAlgo}x^{k+1} = x^k + d^k \Delta \tilde
x^k,\ee where $d^k$ is a positive stepsize and $\Delta \tilde x^k$
is the inexact Newton direction at primal vector $x^k$ (obtained
through an iterative dual variable computation scheme and a
two-stage primal direction computation that maintains feasibility).
We will refer to this algorithm as the {\it (distributed) inexact
Newton method}.

\subsection{Stepsize Rule}\label{sec:stepsize}

We next describe a stepsize rule that can be computed in a
distributed manner while achieving local superlinear convergence
rate (to an error neighborhood) for the primal iterations. This rule will further guarantee
that the primal vectors $x^k$ generated by the algorithm remain
strictly positive for all $k$, hence ensuring that the Hessian
matrix is well-defined at all iterates (see Eq.\ (\ref{eq:Hdiag})
and Theorem \ref{theorem:tildeFeasible}).

Our stepsize rule will be based on an inexact version of the Newton
decrement. At a given primal vector $x^k$ (with Hessian matrix
$H_k$), we define the \textit{exact Newton direction}, denoted by
$\Delta x^k$, as the exact solution of the system of equations
(\ref{eq:newtonUpdate}). The \textit{exact Newton decrement}
${\lambda}( x^k)$ is defined as
\be\label{eq:lambdaExactAl}{\lambda}(x^k) = \sqrt{(\Delta {
x}^k)'H_k\Delta {x}^k}.\ee Similarly, the \textit{inexact Newton
decrement} ${\tilde\lambda}(x^k)$ is given by
\be\label{eq:lambdaInexact}\tilde{\lambda}(x^k) = \sqrt{(\Delta
{\tilde x}^k)'H_k\Delta {\tilde x}^k},\ee where  $\Delta {\tilde
x}^k$ is the inexact Newton direction at primal vector $x^k$. Note
that both $\lambda (x^k)$ and $\tilde{\lambda}(x^k)$ are nonnegative
and well-defined due to the fact that the matrix $\nabla^2 f(x^k)$
is positive definite.

Our stepsize rule involves the inexact Newton decrement
$\tilde \lambda (x^k)$, we use $\theta^k$ to denote the approximate value of
$\tilde \lambda (x^k)$ obtained through some distributed computation procedure.
One possible such procedure with finite termination yielding $\theta^k = \tilde \lambda (x^k)$ exactly is described in Appendix \ref{app:stepsizeComp}. However, other estimates $\theta^k$ can be used, which can potentially be obtained by exploiting the diagonal structure of the
Hessian matrix, writing the inexact Newton decrement as
\[\tilde \lambda (x^k) = \sqrt{\sum_{i\in \mathcal{L}\bigcup\mathcal{S}}
(\Delta \tilde x^k)_i^2(H_k)_{ii}} = \sqrt{(L+S)\bar y},\] where
$\bar y=\frac{1}{S+L}\sum_{i\in \mathcal{S}\bigcup\mathcal{L}}
(\Delta \tilde x^k)_i^2(H_k)_{ii}$ and using consensus type of algorithms.

Given the scalar $\theta^k$, an approximation to the inexact Newton decrement $\tilde \lambda (x^k)$, at each iteration $k$, we choose the stepsize $d^k$ as follows: Let $V$
be some positive scalar with $0<V<0.267$. We have
\begin{align}\label{eq:stepsize}
d^k = \left\{\begin{array}{ccc}
& \frac{b}{\theta^k+1} \quad &\textrm{if} \quad\theta^k \geq V \textrm{ for all previous $k$}, \\
& 1 \quad & \textrm{otherwise},
\end{array}
\right.
\end{align}
where $b\in (0,1)$.  The upper bound on $V$ will be used in analysis
of the quadratic convergence phase of our algorithm [cf.\ Assumption
\ref{ass:01}]. This bound will also ensure the strict positivity of
the generated primal vectors [cf.\ Theorem
\ref{theorem:tildeFeasible}].

There can be two sources of error in the execution of the algorithm.
The first is in the computation of the inexact Newton direction,
which arises due to iterative computation of the dual vector $w^k$
and the modification we use to maintain feasibility. Second source
of error is in the stepsize rule, which is a function
of $\theta^k$, an approximation to the inexact Newton decrement
$\tilde \lambda(x^k)$. We next
state two assumptions that quantify the bounds on these errors.

\begin{assumption}\label{ass:errorBoundEps}
Let $\{x^k\}$ denote the sequence of primal vectors generated by the
distributed inexact Newton method. Let $\Delta x^k$ and $\Delta
{\tilde x}^k$  denote the exact and inexact Newton directions at
$x^k$, and $\gamma^k$ denote the error in the Newton direction
computation, i.e.,
\begin{equation}\label{eq:defGamma}
    \Delta x^k = \Delta {\tilde x}^k+ \gamma^k.
\end{equation}
For all $k$, $\gamma^k$ satisfies
\begin{equation}\label{ineq:errorEps}
   |(\gamma^k)' \nabla^2 f( x^k) \gamma^k|  \leq    p^2(\Delta {\tilde x}^k)'\nabla^  2 f( x^k) \Delta {\tilde x}^k +\epsilon.
\end{equation}
for some positive scalars $p<1$ and $\epsilon$.
\end{assumption}

This assumption imposes a bound on the weighted norm of the Newton
direction error $\gamma^k$ as a function of the weighted norm of
$\Delta \tilde x^k$ and a constant $\epsilon$. Note that without the
constant $\epsilon$, we would require this error to vanish when
$x^k$ is close to the optimal solution, $i.e.$, when $\Delta \tilde
x^k$ is small, which is impractical for implementation purposes.
Given $p$ and $\epsilon$, one can devise distributed schemes for
determining the number of dual iterations needed so that the
resulting error $\gamma^k$ satisfies this Assumption (see Appendix
\ref{app:errorbounds}).

We bound the error in the inexact Newton decrement calculation as follows.

\begin{assumption}\label{ass:stepsize}
Let $\tau^k$ denote the error in the Newton decrement calculation,
i.e., \be\label{eq:defTau}\tau^k = \tilde \lambda (x^k) -
\theta^k.\ee For all $k$, $\tau^k$ satisfies
\alignShort{|\tau^k|\leq \left(\frac{1}{b}-1\right)(1+V).}
\end{assumption}

This assumption will be used in establishing the strict positivity
of the generated primal vectors $x^k$. Given $b$ and $V$, using
convergence rate results for average consensus schemes (see
\cite{Consensus2},\cite{NedicSubgradientConsensus}), one can provide
a lower bound on the number of average consensus steps needed so
that the error $\tau^k$ satisfies this assumption. When the method presented in Appendix \ref{app:stepsizeComp} is used to compute $\theta^k$, then we have $\tau^k=0$ for all $k$ and the preceding assumption is satisfied clearly. Throughout the
rest of the paper, we assume the conditions in Assumptions
\ref{asmp:utility}-\ref{ass:stepsize} hold.

We next show that the stepsize choice in (\ref{eq:stepsize}) will
guarantee strict positivity of the primal vector $x^k$ generated by
our algorithm. This is important since it ensures that the Hessian
$H^k$ and therefore the (inexact) Newton direction is well-defined
at each iteration. We proceed by first establishing a bound on the
error in the stepsize calculation.

\begin{lemma}\label{lemma:stepSize}
Let $\theta^k$ be an approximation of the inexact Newton decrement
$\tilde{\lambda}(x^k)$ defined in (\ref{eq:lambdaInexact}). For
$\theta^k \geq V$, we have \be \label{eq:stepsizeBound}
(2b-1)/(\tilde{\lambda}( x^k)+1)\leq \frac{b}{\theta^k+1}\leq
1/(\tilde{\lambda}( x^k)+1), \ee where $b\in (0,1)$ is the constant
used in stepsize choice (\ref{eq:stepsize}).
\end{lemma}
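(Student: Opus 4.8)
The idea is to reduce both inequalities in \eqref{eq:stepsizeBound} to a single elementary estimate on the decrement error $\tau^k$, and then to read that estimate off from Assumption \ref{ass:stepsize}. First I would use the definition $\tau^k = \tilde\lambda(x^k) - \theta^k$ from \eqref{eq:defTau} to write $\tilde\lambda(x^k) = \theta^k + \tau^k$, so that the quantities $\tilde\lambda(x^k)+1$ and $\theta^k+1$ appearing in \eqref{eq:stepsizeBound} are both expressed through $\theta^k$ and $\tau^k$. Since we are in the case $\theta^k \geq V$ with $V>0$, and $\tilde\lambda(x^k)\geq 0$, both denominators are strictly positive, so each rational inequality in \eqref{eq:stepsizeBound} is equivalent to the polynomial inequality obtained by cross-multiplying.

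The estimate that drives the whole argument is the following: because $\theta^k\geq V$ and $b\in(0,1)$, multiplying the bound of Assumption \ref{ass:stepsize} by $b$ gives
\[ b\,|\tau^k| \;\leq\; b\Big(\tfrac1b-1\Big)(1+V) \;=\; (1-b)(1+V) \;\leq\; (1-b)(\theta^k+1). \]
For the right-hand inequality of \eqref{eq:stepsizeBound}, namely $\tfrac{b}{\theta^k+1}\leq\tfrac{1}{\tilde\lambda(x^k)+1}$, cross-multiplying and substituting $\tilde\lambda(x^k)=\theta^k+\tau^k$ turns the claim into $b\tau^k \leq (1-b)(\theta^k+1)$, which is immediate from the displayed estimate together with $\tau^k\leq|\tau^k|$. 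For the left-hand inequality $\tfrac{2b-1}{\tilde\lambda(x^k)+1}\leq\tfrac{b}{\theta^k+1}$, I would split on the sign of $2b-1$: if $2b-1\leq 0$ the left-hand side is nonpositive while the middle term $\tfrac{b}{\theta^k+1}$ is positive, so there is nothing to prove; if $2b-1>0$, the same cross-multiply-and-substitute step reduces the claim to $-(1-b)(\theta^k+1)\leq b\tau^k$, which holds because $\tau^k\geq-|\tau^k|$ and the displayed estimate.

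I do not anticipate a genuine obstacle: the entire argument consists of unwinding the definition of $\tau^k$ and applying Assumption \ref{ass:stepsize} once. The only points that require a moment's care are checking the strict positivity of the denominators before cross-multiplying — which is exactly where the hypothesis $\theta^k\geq V>0$ is used — and the case split on the sign of $2b-1$ in the lower bound, since $b$ is only assumed to lie in $(0,1)$ and so $2b-1$ may be negative.
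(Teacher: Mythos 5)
Your proof is correct and follows essentially the same route as the paper: both derive the two bounds by applying Assumption \ref{ass:stepsize} together with $\theta^k\geq V$ to obtain $b|\tilde\lambda(x^k)-\theta^k|\leq(1-b)(\theta^k+1)$ and then rearranging. The only difference is your extra case split on the sign of $2b-1$, which is actually unnecessary — since $\theta^k+1$ and $\tilde\lambda(x^k)+1$ are both strictly positive, cross-multiplying the rational inequality is valid regardless of the sign of the numerators, exactly as the paper exploits by deriving the polynomial inequality $(2b-1)(\theta^k+1)\leq b(\tilde\lambda(x^k)+1)$ first and then dividing.
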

\begin{proof}
By Assumption \ref{ass:stepsize} and the fact $\theta^k\geq V$, we
have \be\label{ineq:abs}{|\tilde{\lambda}(
x^k)-\theta^k|\leq\left(\frac{1}{b}-1\right)(1+V)
\leq\left(\frac{1}{b}-1\right)(1+\theta^k).}\ee By multiplying both
sides by the positive scalar $b$, the above relation implies
\alignShort{b\theta^k-b\tilde{\lambda}( x^k)\leq (1-b)(1+\theta^k),}
which  yields\alignShort{(2b-1)\theta^k+(2b-1)\leq b\tilde{\lambda}(
x^k)+b.} By dividing both sides of the above relation by the
positive scalar $(\theta^k+1)(\tilde\lambda (x^k)+1)$, we obtain the
first inequality in Eq.\ (\ref{eq:stepsizeBound}).

Similarly, using Eq.\ (\ref{ineq:abs}) we can  establish
\alignShort{b\tilde \lambda (x^k)-b\theta^k\leq (1-b)(1+\theta^k),}
which can be rewritten as \alignShort{b\tilde \lambda (x^k)+b\leq
\theta^k+1.} After dividing both sides of the preceding relation by
the positive scalar $(\theta^k+1)(\tilde\lambda (x^k)+1)$, we obtain
the second inequality in Eq.\ (\ref{eq:stepsizeBound}).
\end{proof}

With this bound on the stepsize error, we can show that starting
with a strictly positive feasible solution, the primal vectors $x^k$
generated by our algorithm remain positive for all $k$.

\begin{theorem}\label{theorem:tildeFeasible}
Given a strictly positive feasible primal vector $x^0$, let
$\{x^k\}$ be the sequence generated by the inexact distributed
Newton method (\ref{eq:inexaAlgo}). Assume that the stepsize $d^k$
is selected according to Eq.\ (\ref{eq:stepsize}) and the constant
$b$ satisfies $\frac{V+1}{2V+1}<b<1$. Then, the primal vector $x^k$
is strictly positive for all $k$.
\end{theorem}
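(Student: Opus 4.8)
The goal is to show that the primal iterates $x^k$ stay strictly positive under the stepsize rule (\ref{eq:stepsize}). The natural approach is induction on $k$: assume $x^k>0$ (so that $H_k$, $\Delta\tilde x^k$, $\tilde\lambda(x^k)$ are all well-defined) and show $x^{k+1}=x^k+d^k\Delta\tilde x^k>0$. The key quantity to control is, for each coordinate $i$, the ratio $(\Delta\tilde x^k)_i / x_i^k$; if $d^k$ times this ratio is bounded below by $-1$ componentwise, positivity is preserved. I would bound this ratio using the inexact Newton decrement: since $f$ contains the barrier term $-\mu\sum\log x_i$ with $\mu\ge1$, the Hessian satisfies $(H_k)_{ii}\ge \mu/(x_i^k)^2\ge 1/(x_i^k)^2$, hence
\[
\frac{(\Delta\tilde x^k)_i^2}{(x_i^k)^2}\le (H_k)_{ii}(\Delta\tilde x^k)_i^2\le (\Delta\tilde x^k)'H_k\Delta\tilde x^k=\tilde\lambda(x^k)^2,
\]
so $|(\Delta\tilde x^k)_i|/x_i^k\le\tilde\lambda(x^k)$ for every $i$.

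**Carrying it out.** With that bound in hand, I split into the two cases of the stepsize rule. In the "otherwise" branch $d^k=1$; but this branch is only taken when $\theta^k<V$ at the current step, and combined with Assumption \ref{ass:stepsize} this forces $\tilde\lambda(x^k)\le\theta^k+|\tau^k|<V+(\frac1b-1)(1+V)$; the hypothesis $\frac{V+1}{2V+1}<b$ is exactly what makes this quantity strictly less than $1$, so $|d^k(\Delta\tilde x^k)_i|/x_i^k=|(\Delta\tilde x^k)_i|/x_i^k\le\tilde\lambda(x^k)<1$ and positivity holds. In the branch $d^k=b/(\theta^k+1)$ with $\theta^k\ge V$, Lemma \ref{lemma:stepSize} gives $d^k\le 1/(\tilde\lambda(x^k)+1)$, hence
\[
d^k\,\frac{|(\Delta\tilde x^k)_i|}{x_i^k}\le\frac{\tilde\lambda(x^k)}{\tilde\lambda(x^k)+1}<1,
\]
so again $x_i^{k+1}=x_i^k+d^k(\Delta\tilde x^k)_i\ge x_i^k\bigl(1-d^k|(\Delta\tilde x^k)_i|/x_i^k\bigr)>0$. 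The base case $k=0$ is the assumed strict positivity of $x^0$.

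**Main obstacle.** The routine parts are the algebra in the two cases; the one genuinely delicate point is handling the logical structure of the stepsize rule — the $d^k=b/(\theta^k+1)$ branch requires "$\theta^j\ge V$ for all previous $j$", and I must make sure the induction is set up so that whenever I invoke Lemma \ref{lemma:stepSize} at step $k$ the condition $\theta^k\ge V$ genuinely holds, while in the complementary branch I correctly extract $\theta^k<V$ at that step. I would also double-check that the estimate $(H_k)_{ii}\ge 1/(x_i^k)^2$ only uses $\mu\ge1$ together with $\frac{\partial^2U_i}{\partial x_i^2}\le0$ from Assumption \ref{asmp:utility} (for $i\le S$) and is immediate (for $i>S$), and that $x_i^k>0$ from the inductive hypothesis is what makes $(\Delta\tilde x^k)_i/x_i^k$ meaningful in the first place. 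Beyond that, no serious difficulty is expected.
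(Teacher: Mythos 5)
Your proof is correct and takes essentially the same route as the paper: the same Hessian diagonal bound $(H_k)_{ii}\geq\mu/(x_i^k)^2$ with $\mu\geq 1$ to get $|(\Delta\tilde x^k)_i|/x_i^k\leq\tilde\lambda(x^k)$, the same appeal to Lemma \ref{lemma:stepSize} in the $\theta^k\geq V$ branch to get $d^k\leq 1/(1+\tilde\lambda(x^k))$, and the same use of Assumption \ref{ass:stepsize} together with $b>(V+1)/(2V+1)$ and $V<0.267$ in the $\theta^k<V$ branch to get $\tilde\lambda(x^k)<2V<1$. The ``main obstacle'' you flag regarding the logical structure of the stepsize rule is real but is handled identically (i.e.\ not made fully explicit) in the paper: the paper's induction also splits on $\theta^k\geq V$ versus $\theta^k<V$ and implicitly identifies the former with the $d^k=b/(\theta^k+1)$ branch, glossing over iterations $k$ beyond the transition index $\bar k$ where $d^k=1$ but $\theta^k$ is no longer computed; so on this point you are neither better nor worse off than the reference proof.
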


\begin{proof}
We will prove this claim by induction. The base case of  $x^0>0$
holds by the assumption of the theorem. Since the $U_i$ are strictly
concave [cf.\ Assumption \ref{asmp:utility}], for any $x^k$, we have
$-\frac{\partial^2 U_i}{\partial x_i^2}( x_i^k)\geq 0$. Given the
form of the Hessian matrix [cf.\ Eq.\ (\ref{eq:Hdiag})], this
implies $(H_k)_{ii}\geq \frac{\mu}{ (x_i^k)^2}$ for all $i$, and
therefore \alignShort{\tilde \lambda(x^k) =
\left(\sum_{i=1}^{S+L}(\Delta \tilde
x^k_i)^2(H_k)_{ii}\right)^{\frac{1}{2}} \geq
\left(\sum_{i=1}^{S+L}\mu\left(\frac{\Delta \tilde
x^k_i}{x_i^k}\right)^2\right)^{\frac{1}{2}} \geq
\mbox{max}_i\left|\frac{\sqrt{\mu}\Delta\tilde x^k_i}{x_i^k}\right|,
} where the last inequality follows from the nonnegativity of the
terms $\mu\left(\frac{\Delta\tilde x^k_i}{x_i^k}\right)^2$. By
taking the reciprocal on both sides, the above relation implies
\begin{align} \label{ineq:stepFeasible}
\frac{1}{\tilde \lambda(x^k)} &\leq \frac{1}{\mbox{max}_i\left|\frac{\sqrt{\mu}\Delta\tilde x^k_i}{x_i^k}\right|} =\frac{1}{\sqrt{\mu}}\mbox{min}_i \left|\frac{x_i^k}{\Delta\tilde x^k_i}\right|
\leq \mbox{min}_i \left|\frac{x_i^k}{\Delta\tilde x^k_i}\right|,
\end{align}
where the last inequality follows from the fact that $\mu\geq 1$.

We show the inductive step by considering two cases.
\begin{itemize}
\item{Case i: $\theta^k\geq V$}\\
By Lemma \ref{lemma:stepSize}, the stepsize $d^k$ satisfies
\[d^k\leq {1}/(1+\tilde \lambda(x^k))< {1}/\tilde \lambda(x^k).\]
Using Eq.\ (\ref{ineq:stepFeasible}), this implies $d^k<
\mbox{min}_i \left|\frac{x_i^k}{\Delta\tilde x^k_i}\right|$. Hence
if $x^k>0$, then $x^{k+1} = x^k + d^k\Delta\tilde x^k>0$.
\item{Case ii: $\theta^k<V$}\\
By Assumption \ref{ass:stepsize}, we have $\tilde
\lambda(x^k)<V+\left(\frac{1}{b}-1\right)(1+V)$. Using the fact that
$b>\frac{V+1}{2V+1}$, we obtain \alignShort{\tilde \lambda(x^k)<
V+\left(\frac{1}{b}-1\right)(1+V)<
V+\left(\frac{2V+1}{V+1}-1\right)(1+V)=2V\leq 1,} where the last
inequality follows from the fact that $V<0.267$. Hence we have $d^k
= 1< \frac{1}{\tilde \lambda(x^k)}\leq \mbox{min}_i
|\frac{x_i^k}{\Delta \tilde x^k_i}|$, where the last inequality
follows from Eq.\ (\ref{ineq:stepFeasible}). Once again, if $x^k>0$,
then $x^{k+1} = x^k + d^k\Delta \tilde x^k>0$.
\end{itemize}
In both cases we have $x^{k+1} = x^k + d^k\Delta \tilde x^k>0$,
which completes the induction proof.
\end{proof}

In the rest of the paper, we will assume that the constant $b$ used
in the definition of the stepsize satisfies $\frac{V+1}{2V+1}<b<1$.

\section{Convergence Analysis}\label{sec:analysis}
We next present our convergence analysis for both primal and dual
iterations. We first establish convergence for dual iterations.

\subsection{Convergence in Dual Iterations}\label{sec:ConvDual1}
We characterize the rate of convergence of the dual iteration
(\ref{eq:dualInnerIteration}). We will use the following lemma
\cite{VargaMatrixIte}.

\begin{lemma}\label{lemma:eig}
Let $M$ be an $n\times n$ matrix, and assume that its spectral
radius, denoted by $\rho(M)$, satisfies $\rho(M)<1$. Let
$\{\lambda_i\}_{i=1,\ldots,n}$ denote the set of eigenvalues of $M$,
with $1>|\lambda_1|\geq|\lambda_2|\geq\ldots\geq|\lambda_n|$ and let
$v_i$ denote the set of corresponding unit length right
eigenvectors. Assume the matrix has $n$ linearly independent
eigenvectors.\footnote{An alternative assumption is that the
algebraic multiplicity of each $\lambda_i$ is equal to its
corresponding geometric multiplicity, since eigenvectors associated
with different eigenvalues are independent \cite{Lay}.} Then for the
sequence $w(t)$ generated by the following iteration
\be\label{iteLemma} w(t+1)=Mw(t),\ee we have
\be\label{speedConvLemma} \norm{w(t)-w^*}_2 \leq |\lambda_1|^t
\alpha ,\ee for some positive scalar $\alpha$, where $w^*$ is the
limit of iteration (\ref{iteLemma}) as $t\to \infty$.
\end{lemma}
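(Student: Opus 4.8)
The plan is to use the two hypotheses directly: diagonalizability lets us write the iteration explicitly in an eigenbasis, and $\rho(M)<1$ forces geometric decay. First I would identify the limit. Since $M$ has $n$ linearly independent right eigenvectors $v_1,\dots,v_n$, the matrix $V=[v_1\ \cdots\ v_n]$ is invertible and $MV=V\Lambda$ with $\Lambda=\mathrm{diag}(\lambda_1,\dots,\lambda_n)$. Expanding the initial point as $w(0)=\sum_{i=1}^n c_i v_i$, where $(c_1,\dots,c_n)'=V^{-1}w(0)$, and iterating $w(t+1)=Mw(t)$ gives the closed form $w(t)=M^t w(0)=\sum_{i=1}^n c_i\lambda_i^t v_i$ for every $t\geq 0$. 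Because $|\lambda_i|\le\rho(M)<1$ for all $i$, each term $c_i\lambda_i^t v_i\to 0$, so the sequence converges and its limit is $w^*=0$; hence $\norm{w(t)-w^*}_2=\norm{w(t)}_2$.

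Next I would bound $\norm{w(t)}_2$. Applying the triangle inequality to the closed form and using $\norm{v_i}_2=1$ yields $\norm{w(t)}_2\le\sum_{i=1}^n|c_i|\,|\lambda_i|^t$. Since $t\ge 0$ and $|\lambda_i|\le|\lambda_1|$ for all $i$ by the assumed ordering, we have $|\lambda_i|^t\le|\lambda_1|^t$, so $\norm{w(t)}_2\le|\lambda_1|^t\sum_{i=1}^n|c_i|$. Thus the claim holds with $\alpha=\sum_{i=1}^n|c_i|=\norm{V^{-1}w(0)}_1$, which is strictly positive unless $w(0)=0$, in which case $w(t)\equiv 0$ and the inequality holds with any positive $\alpha$.

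There is no substantive obstacle here; it is a routine spectral-decomposition estimate. The only points needing a little care are (i) confirming that the limit $w^*$ is genuinely the zero vector, so that the quantity $\norm{w(t)-w^*}_2$ in the statement is exactly $\norm{w(t)}_2$, and (ii) keeping the argument valid when some $\lambda_i$ (hence $v_i$, $c_i$) are complex: the eigen-expansion and every inequality above are read over $\mathbb{C}$, and for real $w(0)$ the complex terms occur in conjugate pairs so $w(t)$ remains real, without affecting the modulus bound. If one wishes to avoid the diagonalizability hypothesis, the footnote's alternative assumption (equal algebraic and geometric multiplicities of each $\lambda_i$) gives the same block-diagonal decomposition and the same estimate.
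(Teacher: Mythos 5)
Your proof is correct and complete. The paper does not actually prove this lemma; it is stated with a citation to Varga's \emph{Matrix Iterative Analysis} (\cite{VargaMatrixIte}) and used as a known fact, so there is no in-paper argument to compare against. Your proof is the standard one: diagonalize $M$, write $w(t)=\sum_i c_i\lambda_i^t v_i$, observe that $\rho(M)<1$ forces $w^*=0$, and bound $\norm{w(t)}_2\le |\lambda_1|^t\sum_i|c_i|$ using the unit normalization of the $v_i$. You also correctly flag the two small edge cases: that the expansion and modulus bound remain valid over $\mathbb{C}$ when eigenvalues are complex, and that $\alpha=\sum_i|c_i|$ degenerates to zero only when $w(0)=0$, in which case the bound is trivial. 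This fills in exactly the argument the paper delegates to the reference.
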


We use $M$ to denote the $L\times L$ matrix, $M =
(D_k+\bar{B}_k)^{-1}(\bar{B}_k-B_k)$, and $z$ to denote the vector
$z = (D_k+\bar{B}_k)^{-1}(-AH_k^{-1}\nabla f(x^k))$. We can rewrite
iteration (\ref{eq:dualInnerIteration})  as $w(t+1) =Mw(t)+z$, which
implies \als{w(t+q)&=M^qw(t)+\sum_{i=0}^{q-1}M^iz =
M^qw(t)+(I-M^{q})(I-M)^{-1}z.} This alternative representation is
possible since $\rho(M)<1$, which follows from Theorem
\ref{thm:dualSplitting}. After rearranging the terms, we obtain
\als{w(t+q)&= M^q(w(t)-(I-M)^{-1}z)+(I-M)^{-1}z.} Therefore starting
from some arbitrary initial vector $w(0)$, the convergence speed of
the sequence $w(t)$ coincides with the sequence $u(t)$, generated by
$u(t+q)= M^qu(0)$, where $u(0) =w(0)-M(I-M)^{-1}z$.

We next show that the matrix $M$ has $L$ linearly independent
eigenvectors in order to apply the preceding lemma. We first note that since the nonnegative matrix $A$
has full row rank and the Hessian matrix $H$ has positive diagonal
elements, the product matrix $AH_k^{-1}A'$ has positive diagonal
elements and nonnegative entries. This shows that  the matrix $D_k$
[cf.\ Eq.\ (\ref{eq:defD})] has positive diagonal elements and the
matrix $\bar{B}$ [cf.\ Eq.\ (\ref{eq:defBbar})] has nonnegative
entries. Therefore the matrix $(D_k+\bar{B}_k)^{-\frac{1}{2}}$ is
diagonal and nonsingular. Hence, using the relation $\tilde M =
(D_k+\bar{B}_k)^{\frac{1}{2}}M(D_k+\bar{B}_k)^{-\frac{1}{2}}$, we
see that the matrix $M=(D_k+\bar{B}_k)^{-1}(\bar{B}_k-B_k)$ is
similar to the matrix $\tilde M
=(D_k+\bar{B}_k)^{-\frac{1}{2}}(\bar{B}_k-B_k)(D_k+\bar{B}_k)^{-\frac{1}{2}}$.
From the definition of $B_k$ [cf.\ Eq.\ (\ref{eq:defB})] and the
symmetry of the matrix $AH_k^{-1}A'$, we conclude that the matrix
$B$ is symmetric. This shows that the matrix $\tilde M$ is symmetric
and hence diagonalizable, which implies that the matrix $M$ is also
diagonalizable, and therefore it has $L$ linearly independent
eigenvectors.\footnote{If a square matrix $A$ of size $n\times n$ is
symmetric, then $A$ has $n$ linearly independent eigenvectors. If a
square matrix $B$ of size $n\times n$ is similar to a symmetric
matrix, then $B$ has $n$ linearly independent eigenvectors
\cite{Horn}.} We can use Lemma \ref{lemma:eig} to infer that
\als{\norm{w(t)-w^*}_2= \norm{u(t)-u^*}_2 \leq |\lambda_1|^t\alpha,}
where $\lambda_1$ is the eigenvalue of $M$ with largest magnitude,
and $\alpha$ is a constant that depends on the initial vector $u(0)
=w(0)-(I-M)^{-1}z$. Hence $\lambda_1$ determines the speed of convergence of the dual iteration.

We next analyze the relationship between $\lambda_1$ and the dual graph topology.
First note that the matrix $M=
(D_k+\bar{B}_k)^{-1}(\bar{B}_k-B_k)$ is the weighted Laplacian
matrix of the dual graph [cf.\ Section \ref{sec:dual}], and is therefore positive semidefinite \cite{ChungGraphBook}. We then have $\rho(M) = |\lambda_1| = \lambda_1\geq 0$. From graph theory \cite{graphLaplacian}, Theorem \ref{thm:dualSplitting} and the above analysis, we have
\be\label{ineq:eigBound}\frac{4\mbox{mc}(M)}{L}\leq \lambda_1 \leq \min\left\{2\max_{l \in L} \left[(D_k+\bar{B}_k)^{-1}\bar{B}_k\right]_{ll}, 1\right\} ,\ee
where mc$(M)$ is the weighted maximum cut of the dual graph, i.e., \[\mbox{mc}(M)=\max_{S\subset \tilde{\mathcal{N}}}\left\{\sum_{i\in S, j \not\in S}\tilde W_{ij}+\sum_{i\in S, j \not\in S} \tilde W_{ji}\right\},\]  where $\tilde W_{ij}$ is the weight associated with the link from node $i$ to $j$. The above relation suggests that a large maximal cut of the dual graph provides a large lower bound on $\lambda_1$, implying the dual iteration cannot finish with very few iterates. When the maximum weighted out-degree, i.e., $\max_{l \in L} \left[(D_k+\bar{B}_k)^{-1}\bar{B}_k\right]_{ll}$, in the dual graph is small, the above relation provides a small upper bound on $\lambda_1$ and hence suggesting that the dual iteration converges fast.

We finally illustrate the relationship between the dual graph
topology and the underlying network properties by means of two
simple examples that highlight how different network structures can
affect the dual graph and hence the convergence rate of the dual
iteration. In particular, we show that the dual iteration converges
slower for a network with a more congested link. Consider two
networks given in Figures \ref{fig:Ex1} and \ref{fig:Ex2}, whose
corresponding dual graphs are presented in Figures \ref{fig:dualEx1}
and \ref{fig:dualEx2} respectively. Both of these networks have $3$
source-destination pairs and $7$ links. However, in Figure
\ref{fig:Ex1} all three flows use the same link, i.e., $L_4$,
whereas in Figure \ref{fig:Ex2} at most two flows share the same
link. This difference in the network topology results in different
degree distributions in the dual graphs as shown in Figures
\ref{fig:dualEx1} and \ref{fig:dualEx2}. To be more concrete, let
$U_i(s_i)=15\log(s_i)$ for all sources $i$ in both graphs and link
capacity $c_l = 35$ for all links $l$. We apply our distributed
Newton algorithm to both problems, for the primal iteration when all
the source rates are $10$, the largest weighted out-degree in the
dual graphs of the two examples are $0.46$ for Figure
\ref{fig:dualEx1} and $0.095$ for Figure \ref{fig:dualEx2}, which
implies the upper bounds for $\lambda_1$ of the corresponding dual
iterations are $0.92$ and $0.19$ respectively [cf.\ Eq.\
(\ref{ineq:eigBound})]. The weighted maximum cut for Figure
\ref{fig:dualEx1} is obtained by isolating the node corresponding to
$L_4$, with weighted maximum cut value of 0.52. The maximum cut for
Figure \ref{fig:dualEx2} is formed by isolating the set $\{L_4,
L_6\}$, with weighted maximum cut value of $0.17$. Based on
(\ref{ineq:eigBound}) these graph cuts generate lower bounds for
$\lambda_1$ of $0.30$ and $0.096$ respectively. By combining the
upper and lower bounds, we obtain intervals for $\lambda_1$ as
$[0.30, 0.92]$ and $[0.096, 0.19]$ respectively. Recall that a large
spectral radius corresponds to slow convergence in the dual
iteration [cf.\ Eq.\ (\ref{speedConvLemma})], therefore these bounds
guarantee that the dual iteration for the network in Figure
\ref{fig:Ex2}, which is less congested, converges faster than for
the one in Figure \ref{fig:Ex1}. Numerical results suggest the
actual largest eigenvalues are $0.47$ and $0.12$ respectively, which
confirm with the prediction.

\begin{figure}
\begin{center}
\hspace{-2cm}
\includegraphics[trim=0.5cm 5cm 3cm 10cm, clip, width = 14cm]{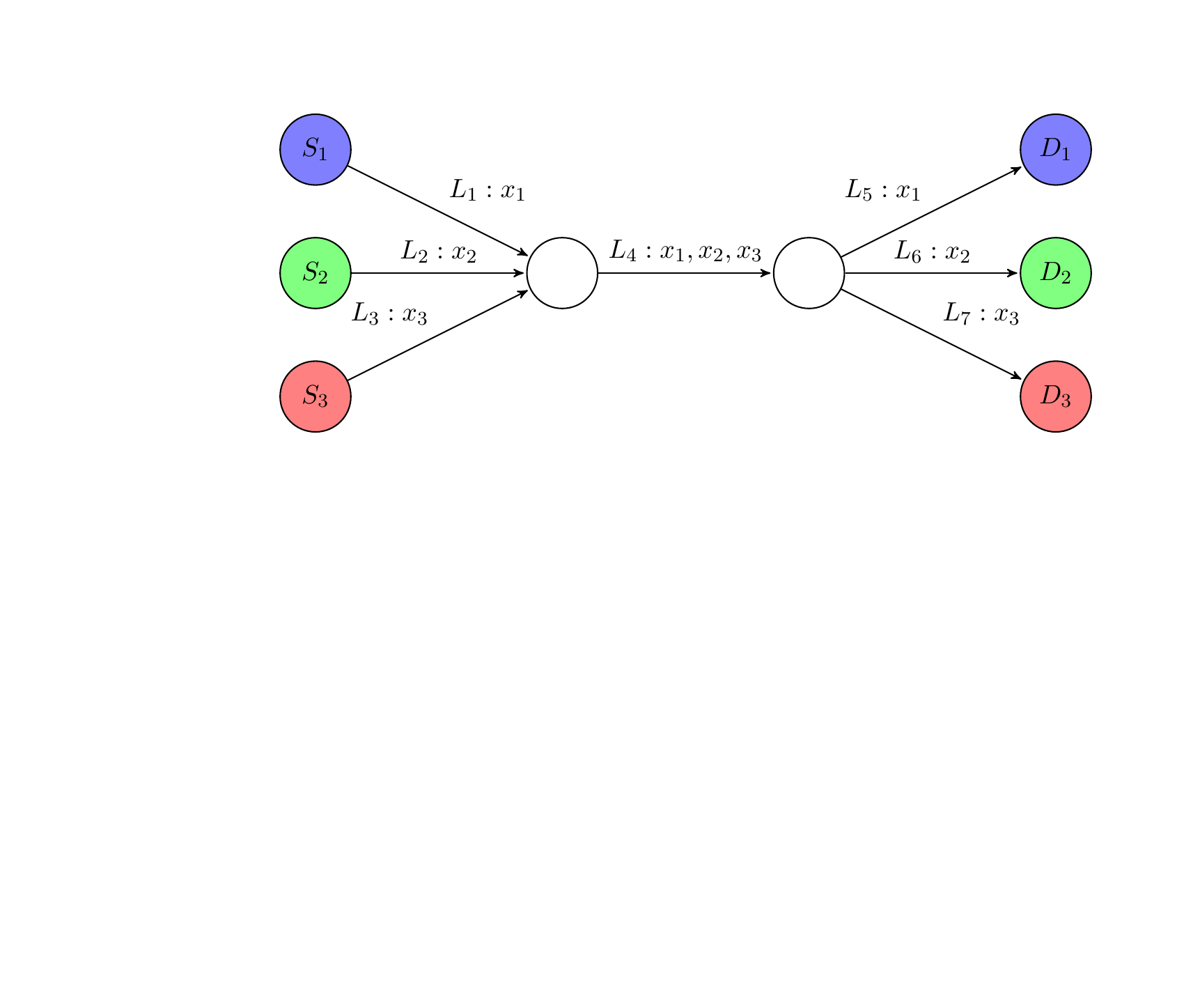}
\vspace{-7cm}
\caption{Each source-destination pair is displayed with the same color. We use $x_i$ to denote the flow corresponding to the $i^{th}$ source-destination pair and $L_i$ to denote the $i^{th}$ link. All 3 flows traverse link $L_4$.}\label{fig:Ex1}
\end{center}
\begin{center}\vspace{0cm}
\includegraphics[trim=2cm 10cm 3cm 8cm, clip, width = 14cm]{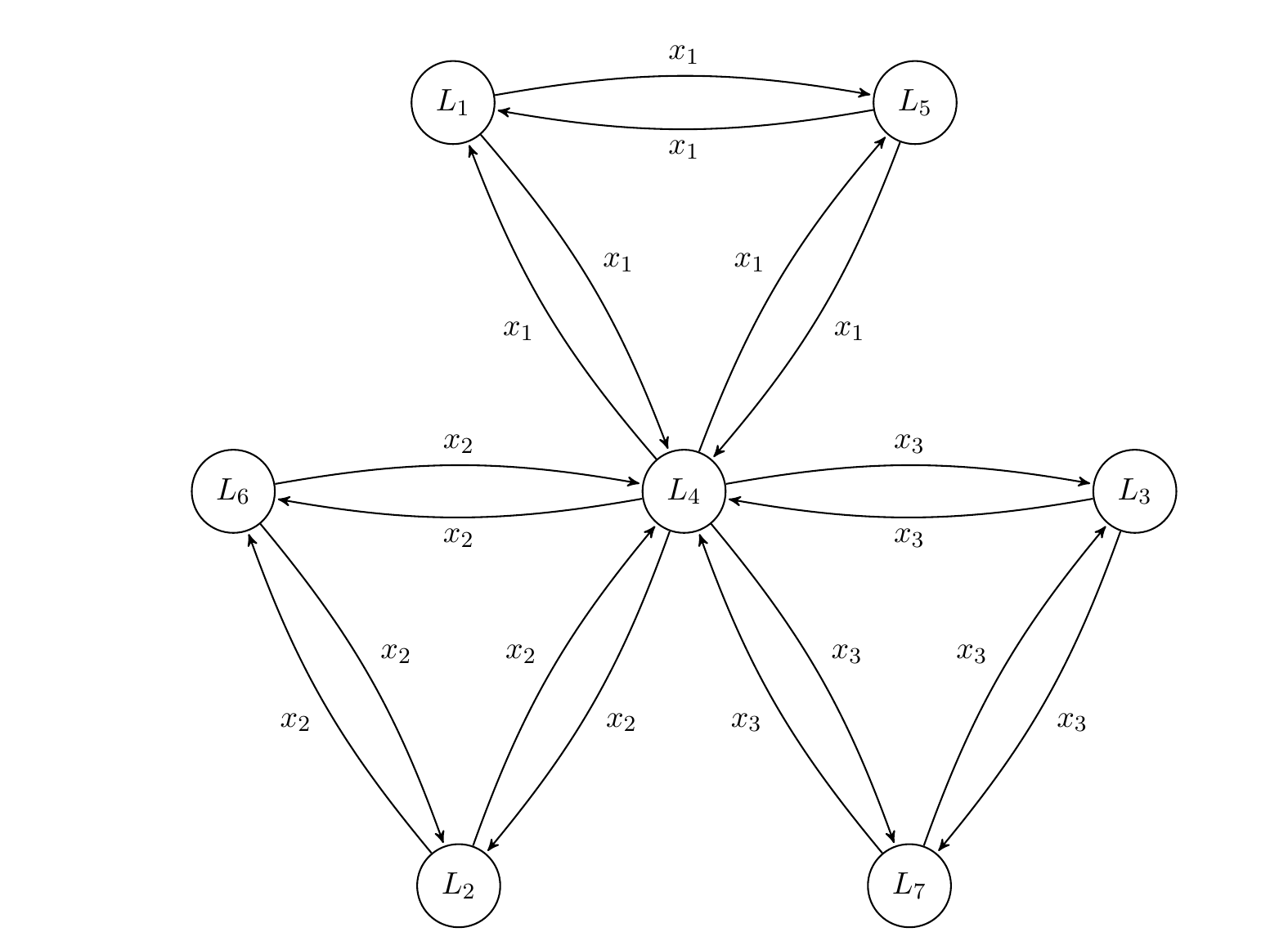}
\caption{Dual graph for the network in Figure \ref{fig:Ex1}, each link in this graph corresponds to the flows shared between the links in the original network. The node corresponding to link $L_4$ has high unweighted out-degree equal to $6$. }\label{fig:dualEx1}
\end{center}
\end{figure}

\begin{figure}
\begin{center}
\includegraphics[trim=2cm 13cm 3cm 10cm, clip, width = 14cm]{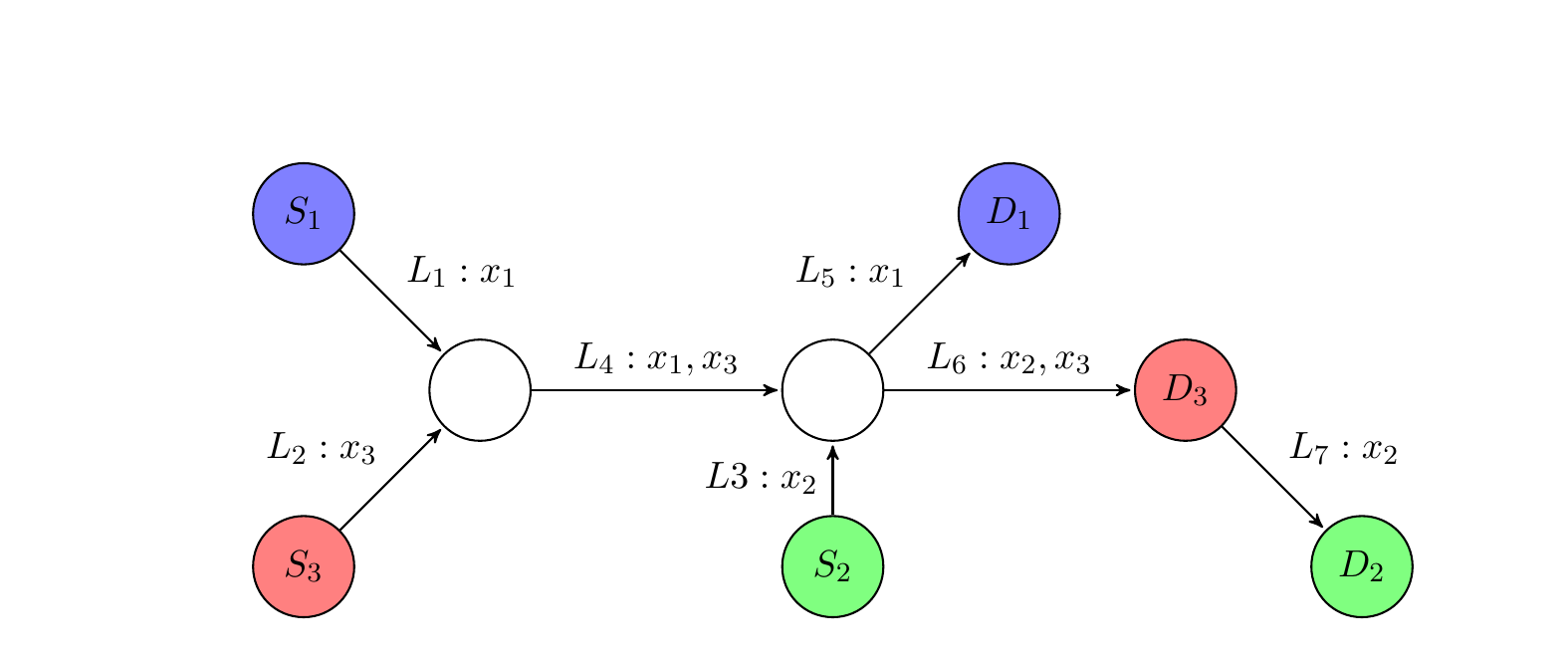}
\caption{Each source-destination pair is displayed with the same color. We use $x_i$ to denote the flow corresponding to the $i^{th}$ source-destination pair and $L_i$ to denote the $i^{th}$ link. Each link has at most $2$ flows traversing it.}\label{fig:Ex2}
\end{center}
\begin{center}
\includegraphics[trim=2cm 11cm 3cm 9cm, clip, width = 14cm]{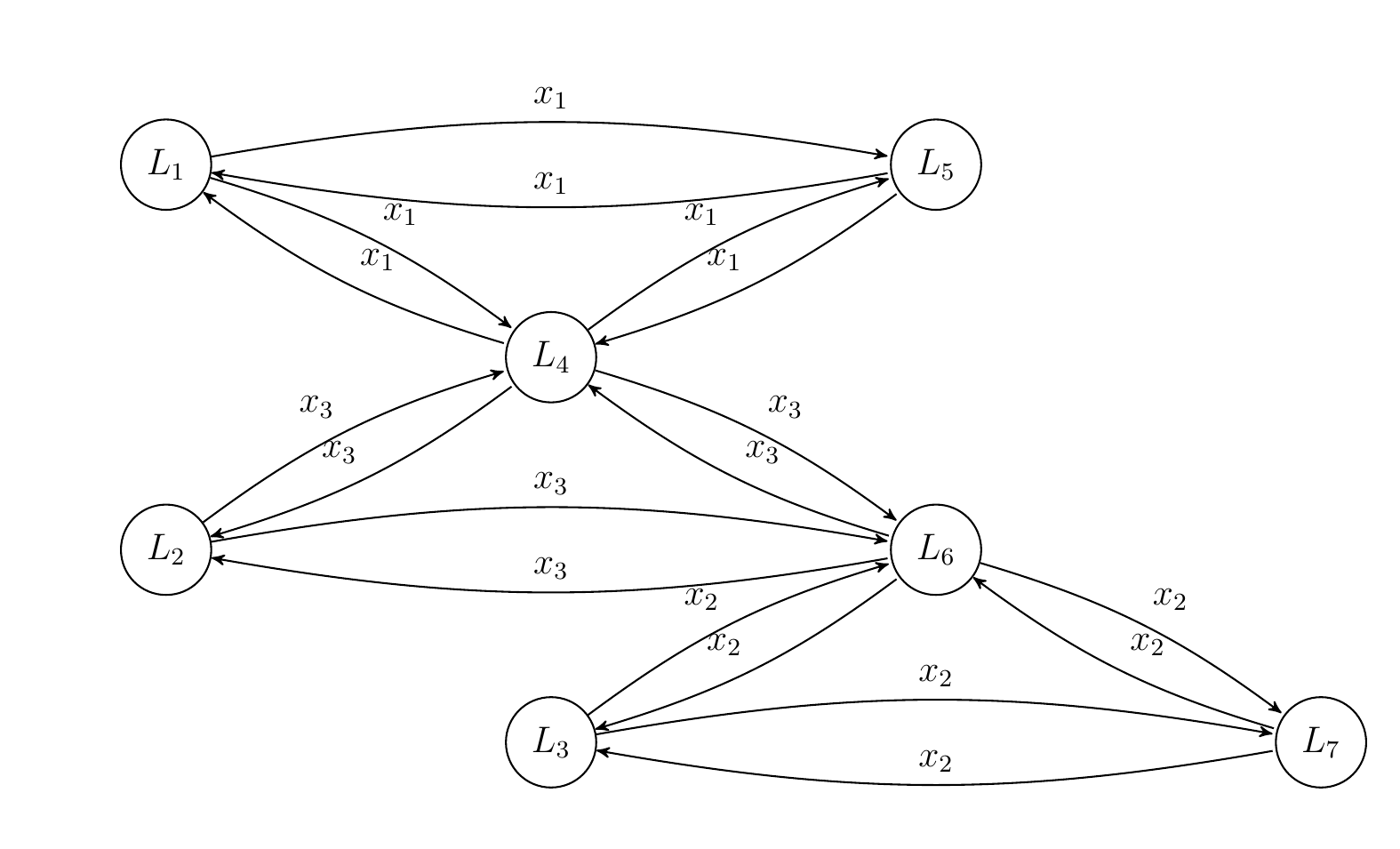}
\caption{Dual graph for the network in Figure \ref{fig:dualEx2}, each link in this graph corresponds to the flows shared between the links in the original network. Both nodes corresponding to links $L_4$ and $L_6$ has relatively high out-degree equal to $4$. }\label{fig:dualEx2}
\end{center}
\end{figure}

\subsection{Convergence in Primal Iterations}
We next present our convergence analysis for the primal sequence
$\{x^k\}$ generated by the inexact Newton method
(\ref{eq:inexaAlgo}). For the $k^{th}$ iteration, we define the
function $\tilde{f}_k:\mathbb{R}\rightarrow \mathbb{R}$ as
\be\label{eq:ftilde} \tilde{f}_k(t) = f(x^k+t\Delta \tilde x^k),\ee
which is self-concordant, because the objective function $f$ is
self-concordant. Note that the value  $\tilde f_k(0)$ and $\tilde
f_k(d^k)$ are the objective function values at $x^k$ and $x^{k+1}$
respectively. Therefore $\tilde f_k(d^k)-\tilde f_k(0)$ measures the
decrease in the objective function value at the $k^{th}$ iteration.
We will refer to the function $\tilde f_k$ as the {\it objective
function along the Newton direction}.

Before proceeding further, we first introduce some properties of
self-concordant functions and the Newton decrement, which will be
used in our convergence analysis.\footnote{We use the same notation
in these lemmas as in
(\ref{eqFormulation})-(\ref{objfunc-eqformulation}) since these
relations will be used in the convergence analysis of the inexact
Newton method applied to problem (\ref{eqFormulation}).}

\subsubsection{Preliminaries}\label{sec:preli}

Using the definition of a self-concordant function, we have the
following result (see \cite{Boyd} for the proof).
\begin{lemma}\label{lemma:selfConcord}
Let $\tilde f:\mathbb{R}\rightarrow \mathbb{R}$ be a self-concordant function. Then for all $t\geq0$ in the domain of the function $\tilde f$ with $t\tilde f''(0)^{\frac{1}{2}}<1$, the following inequality holds:
\begin{equation}\label{eq:selfConIneq}
    \tilde{f}(t) \leq \tilde{f}(0) + t\tilde{f}'(0) - t{\tilde{f}}''(0)^{\frac{1}{2}} - \log(1-t\tilde f''(0)^{\frac{1}{2}}).
\end{equation}
\end{lemma}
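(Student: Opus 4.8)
The plan is to reduce the multivariate-looking statement to the standard one-variable self-concordant estimate and then integrate twice. Since $\tilde f$ is self-concordant on $\mathbb{R}$, by definition $|\tilde f'''(t)| \leq 2\tilde f''(t)^{3/2}$ wherever $\tilde f''(t)>0$. The first step is to control $\tilde f''(t)$ in terms of $\tilde f''(0)$. Writing $\phi(t) = \tilde f''(t)^{-1/2}$ (valid near $0$ since $\tilde f$ is strictly convex along the Newton direction, as $H_k \succ 0$), a short computation gives $|\phi'(t)| = \tfrac12 |\tilde f'''(t)|\,\tilde f''(t)^{-3/2} \leq 1$. Hence $\phi(t) \geq \phi(0) - t$, i.e. $\tilde f''(t)^{-1/2} \geq \tilde f''(0)^{-1/2} - t$, which rearranges to
\begin{equation*}
\tilde f''(t) \leq \frac{\tilde f''(0)}{\bigl(1 - t\,\tilde f''(0)^{1/2}\bigr)^2}
\end{equation*}
for all $t\geq 0$ with $t\,\tilde f''(0)^{1/2} < 1$. (One must check this interval is contained in the domain and that $\tilde f''$ stays positive there; this follows because $\phi$ cannot reach $0$ before $t = \tilde f''(0)^{-1/2}$ given $|\phi'|\leq 1$.)

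The second step is to integrate this bound twice from $0$ to $t$. Integrating once,
\begin{equation*}
\tilde f'(t) = \tilde f'(0) + \int_0^t \tilde f''(s)\,ds \leq \tilde f'(0) + \int_0^t \frac{\tilde f''(0)}{(1-s\,\tilde f''(0)^{1/2})^2}\,ds = \tilde f'(0) + \frac{t\,\tilde f''(0)^{1/2}}{1 - t\,\tilde f''(0)^{1/2}} \cdot \tilde f''(0)^{1/2}.
\end{equation*}
Integrating a second time from $0$ to $t$ and evaluating the elementary integral $\int_0^t \frac{s\,\tilde f''(0)}{1 - s\,\tilde f''(0)^{1/2}}\,ds$ yields, after simplification, the claimed right-hand side $\tilde f(0) + t\tilde f'(0) - t\,\tilde f''(0)^{1/2} - \log(1 - t\,\tilde f''(0)^{1/2})$. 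The bookkeeping here is routine: it is the standard computation that produces the $-x - \log(1-x)$ barrier-type term with $x = t\,\tilde f''(0)^{1/2}$.

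The only genuine obstacle is the first step — justifying that $\tilde f''$ remains strictly positive on the whole interval $[0, t]$ with $t\,\tilde f''(0)^{1/2}<1$, so that $\phi$ is well-defined and differentiable there and the differential inequality $|\phi'| \leq 1$ can be integrated. This is handled by a continuity/maximality argument: let $t^*$ be the supremum of $t$ for which $\tilde f'' > 0$ on $[0,t]$; on $[0,t^*)$ we have $\phi(s) \geq \phi(0) - s > 0$ as long as $s < \tilde f''(0)^{-1/2}$, so $\tilde f''$ cannot vanish before $\tilde f''(0)^{-1/2}$, giving $t^* \geq \tilde f''(0)^{-1/2}$ and covering exactly the range of $t$ in the statement. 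Everything else is a double integration, so I expect no further difficulty. (This is the classical argument from \cite{Boyd}, Chapter 9; the statement is quoted here without reproving it in full.)
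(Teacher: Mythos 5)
Your proposal is correct, and it is exactly the standard argument the paper is implicitly invoking: the paper does not prove this lemma itself but cites \cite{Boyd} (Chapter 9), and what you have written out is that classical proof — bound $\phi(t)=\tilde f''(t)^{-1/2}$ by $|\phi'|\le 1$, deduce $\tilde f''(t)\le \tilde f''(0)/(1-t\tilde f''(0)^{1/2})^2$, then integrate twice. Your attention to the continuity argument ensuring $\tilde f''>0$ on $[0,t]$ (so that $\phi$ is well-defined there) is the right technical point to flag, and it holds precisely on the interval $t<\tilde f''(0)^{-1/2}$ specified in the hypothesis. No gaps.
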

We will use the preceding lemma to prove a key relation in analyzing
convergence properties of our algorithm [see Lemma
\ref{lemma:tildeF}]. The next lemma will be used to relate the
weighted norms of a vector $z$, with weights $\nabla^2f(x)$ and
$\nabla^2f(y)$ for some $x$ and $y$. This lemma plays an essential
role in establishing properties for the Newton decrement (see
\cite{Jarre}, \cite{InteriorBook} for more details).

\begin{lemma}\label{lemmaMatrix}
Let $f:\mathbb{R}^n\rightarrow \mathbb{R}$ be a self-concordant
function. Suppose vectors $x$ and $y$ are in the domain of $f$ and $
\tilde\lambda = ((x-y)'\nabla^2f(x)(x-y))^\frac{1}{2} < 1$, then for
any $z\in \mathbb{R}^n$, the following inequality holds:
\begin{equation}\label{eq:lemmaMatrix}
    (1-\tilde\lambda)^2z'\nabla^2 f(x)z\leq z'\nabla^2f(y)z\leq\frac{1}{(1-\tilde\lambda)^2}z'\nabla^2f(x)z.
\end{equation}
\end{lemma}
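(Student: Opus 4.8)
The plan is to reduce the $n$-dimensional inequality to a one-dimensional differential inequality along the segment joining $x$ and $y$, and then integrate. First I would fix an arbitrary $z\in\mathbb{R}^n$, set $h=y-x$, and note that the whole segment $\{x+th:t\in[0,1]\}$ lies in the domain of $f$ (in our setting this domain is the positive orthant, so this is just convexity). I then introduce the two scalar functions on $[0,1]$,
\[
u(t) = h'\nabla^2 f(x+th)\,h, \qquad \phi(t) = z'\nabla^2 f(x+th)\,z,
\]
so that $u(0)=\tilde\lambda^2$, $\phi(0)=z'\nabla^2 f(x)z$ and $\phi(1)=z'\nabla^2 f(y)z$. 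Since $f$ (the objective of problem (\ref{eqFormulation})) has positive definite Hessian, $u$ and $\phi$ are strictly positive on $[0,1]$, so the logarithms used below are well defined.

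Next I would establish the two differential inequalities $|u'(t)|\le 2u(t)^{3/2}$ and $|\phi'(t)|\le 2\sqrt{u(t)}\,\phi(t)$ for $t\in[0,1]$. The first follows immediately from the definition of self-concordance applied to the line restriction $s\mapsto f(x+th+sh)$ at $s=0$, whose second and third derivatives in $s$ are exactly $u(t)$ and $u'(t)$. The second is the polarized form of self-concordance: writing $w=x+th$, it amounts to $|D^3 f(w)[h,z,z]|\le 2\,(h'\nabla^2 f(w)h)^{1/2}\,(z'\nabla^2 f(w)z)$, a standard consequence of the one-dimensional definition (see \cite{Jarre}, \cite{InteriorBook}).

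Then I would integrate. From $|u'|\le 2u^{3/2}$ one gets $\big|\tfrac{d}{dt}u(t)^{-1/2}\big|\le 1$, hence $u(t)^{-1/2}\ge u(0)^{-1/2}-t=\tfrac{1}{\tilde\lambda}-t>0$ on $[0,1]$ (using $\tilde\lambda<1$), so $\sqrt{u(t)}\le \tilde\lambda/(1-t\tilde\lambda)$. Substituting this into the second inequality gives $\big|\tfrac{d}{dt}\log\phi(t)\big|\le 2\tilde\lambda/(1-t\tilde\lambda)$, and integrating from $0$ to $1$ yields $|\log\phi(1)-\log\phi(0)|\le -2\log(1-\tilde\lambda)$, i.e.
\[
(1-\tilde\lambda)^2 \le \frac{z'\nabla^2 f(y)z}{z'\nabla^2 f(x)z} \le \frac{1}{(1-\tilde\lambda)^2}.
\]
Clearing denominators gives (\ref{eq:lemmaMatrix}), and since $z$ was arbitrary the proof is complete.

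The step I expect to be the main obstacle is the polarization inequality, i.e.\ passing from $|g'''(0)|\le 2g''(0)^{3/2}$ for line restrictions $g$ to the mixed bound $|D^3 f(w)[h,z,z]|\le 2(h'\nabla^2 f(w)h)^{1/2}(z'\nabla^2 f(w)z)$; this requires a genuine argument about cubic versus quadratic forms (reducing to the two-dimensional subspace $\mathrm{span}\{h,z\}$ and optimizing), and I would either reproduce it or simply quote it from \cite{Jarre}, \cite{InteriorBook} as those references do. The remaining steps are routine ODE manipulation; the only other point requiring care is confirming that all the quantities stay finite on $[0,1]$, which is precisely what the hypothesis $\tilde\lambda<1$ guarantees.
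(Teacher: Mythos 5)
The paper states Lemma \ref{lemmaMatrix} without proof, deferring entirely to \cite{Jarre} and \cite{InteriorBook}, so there is no in-paper argument to compare against; your proof is the standard textbook argument from those sources, and it is correct. The structure — set $u(t)=h'\nabla^2 f(x+th)h$ and $\phi(t)=z'\nabla^2 f(x+th)z$, derive $|u'|\le 2u^{3/2}$ from one-dimensional self-concordance and $|\phi'|\le 2\sqrt{u}\,\phi$ from the polarized (trilinear) form, integrate the first to get $\sqrt{u(t)}\le\tilde\lambda/(1-t\tilde\lambda)$, then integrate $\log\phi$ — is exactly how the references prove it. You rightly identify the polarization bound $|D^3f(w)[h,z,z]|\le 2(h'\nabla^2 f(w)h)^{1/2}(z'\nabla^2 f(w)z)$ as the one genuinely nontrivial step; quoting it is appropriate, but be aware it is not a trivial Cauchy--Schwarz consequence and does require the separate argument you mention. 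Two small housekeeping points: your integration uses $u(0)^{-1/2}=1/\tilde\lambda$, so the degenerate case $\tilde\lambda=0$ should be dispatched separately (positive definiteness of $\nabla^2 f(x)$ then forces $x=y$ and the conclusion is trivial); and it is worth stating that $u(t),\phi(t)>0$ for all $t\in[0,1]$ because $\nabla^2 f$ is positive definite on the whole (open convex) domain, which is what legitimizes taking $u^{-1/2}$ and $\log\phi$ along the segment.
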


The next two lemmas establish properties of the Newton decrement
generated by the equality-constrained Newton method. The first lemma
extends results in \cite{Jarre} and \cite{InteriorBook} to allow
inexactness in the Newton direction and reflects the effect of the
error in the current step on the Newton decrement in the next
step.\footnote{We use the same notation in the subsequent lemmas as
in problem formulation (\ref{eqFormulation}) despite the fact that
the results hold for general optimization problems with
self-concordant objective functions and linear equality
constraints.}

\begin{lemma}\label{lemma:nexStep}
Let $f:\mathbb{R}^n\to \mathbb{R}$ be a self-concordant function. Consider solving the equality
constrained optimization problem
\begin{align}\label{equalityExact}
 \mbox{minimize} \quad &f(x)\\\nonumber
 \mbox{subject to} \quad &Ax = c,
\end{align}
using an (exact) Newton method with feasible initialization, where the matrix $A$ is in $\mathbb{R}^{L\times (L+S)}$ and has full column rank, i.e.,   rank$(A) = L$.  Let $\Delta x$ be the exact Newton direction at $x$, i.e., $\Delta x$ solves the following system of
linear equations,
\begin{align}\label{eq:newtonUpdateLemma}
\left( \begin{array}{cc}
\nabla^2 f({x})& A' \\
A & 0
\end{array}
\right)
\left(\begin{array}{c}
\Delta  x\\
w
\end{array}
\right) =
-\left(\begin{array}{c}
\nabla f({x})\\
0
\end{array}
\right).
\end{align}

Let $ \Delta \tilde x$
denote any direction with $\gamma= \Delta x - \Delta \tilde x $, and
$x(t) = x + t\Delta \tilde x$ for $t \in [0,1]$. Let $z$ be the
exact Newton direction at $x+\Delta\tilde x$. If $\tilde\lambda =
\sqrt{\Delta \tilde x'\nabla^2f(x)\Delta \tilde x} <1$, then we have
\alignShort{z\nabla^2 f(x+\Delta \tilde x)'z
    & \leq \frac{\tilde\lambda^2}{1-\tilde\lambda}\sqrt{z' \nabla^2f(x) z} + |\gamma'\nabla ^2f(x)'z|.}
\end{lemma}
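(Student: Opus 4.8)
The plan is to rewrite the squared Newton decrement $z'\nabla^2 f(x+\Delta\tilde x)z$ at the point $y=x+\Delta\tilde x$ entirely in terms of quantities evaluated at $x$ together with the error $\gamma$, and then to bound the resulting curvature-mismatch term by means of self-concordance.

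First I would use the optimality conditions supplied by \req{eq:newtonUpdateLemma}. Since $z$ is the exact Newton direction at $y$, the second block of that system (written at $y$) gives $Az=0$, hence $z'A'=0$, while the first block gives $\nabla^2 f(y)z=-\nabla f(y)-A'\tilde w$ for the associated multiplier $\tilde w$; left-multiplying by $z'$ and using $z'A'=0$ gives $z'\nabla^2 f(y)z=-z'\nabla f(y)$. Next, along the segment $x(t)=x+t\Delta\tilde x$, $t\in[0,1]$, which lies in $\mathrm{dom}\,f$ because $\tilde\lambda<1$, the fundamental theorem of calculus gives $\nabla f(y)=\nabla f(x)+\int_0^1\nabla^2 f(x(t))\Delta\tilde x\,dt$. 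Finally, the first block of \req{eq:newtonUpdateLemma} at $x$ reads $\nabla f(x)=-\nabla^2 f(x)\Delta x-A'w=-\nabla^2 f(x)(\Delta\tilde x+\gamma)-A'w$. Substituting these two identities into $z'\nabla^2 f(y)z=-z'\nabla f(y)$, again cancelling the $A'$ terms against $z'A'=0$, and recognizing $z'\nabla^2 f(x)\Delta\tilde x=\int_0^1 z'\nabla^2 f(x)\Delta\tilde x\,dt$, I obtain
\[
z'\nabla^2 f(y)z \;=\; z'\nabla^2 f(x)\gamma \;+\; \int_0^1 z'\bigl(\nabla^2 f(x)-\nabla^2 f(x(t))\bigr)\Delta\tilde x\,dt .
\]

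It then remains to bound the integral by $\tfrac{\tilde\lambda^2}{1-\tilde\lambda}\sqrt{z'\nabla^2 f(x)z}$. For each fixed $t\in[0,1]$ I would apply Lemma~\ref{lemmaMatrix} to the pair $(x,x(t))$, for which the relevant quantity is $\bigl((x(t)-x)'\nabla^2 f(x)(x(t)-x)\bigr)^{1/2}=t\tilde\lambda<1$; this yields $(1-t\tilde\lambda)^2\nabla^2 f(x)\preceq \nabla^2 f(x(t))\preceq (1-t\tilde\lambda)^{-2}\nabla^2 f(x)$, so that with $H=\nabla^2 f(x)$ and $c(t)=(1-t\tilde\lambda)^{-2}-1$ we get $-c(t)H\preceq H-\nabla^2 f(x(t))\preceq c(t)H$ (the upper bound dominates the lower one in magnitude since $(1-t\tilde\lambda)^{-2}\ge 1$). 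Inserting $H^{1/2}H^{-1/2}$ and applying the Cauchy--Schwarz inequality together with the spectral bound $\|H^{-1/2}\bigl(H-\nabla^2 f(x(t))\bigr)H^{-1/2}\|_2\le c(t)$ gives $|z'\bigl(H-\nabla^2 f(x(t))\bigr)\Delta\tilde x|\le c(t)\sqrt{z'Hz}\,\sqrt{\Delta\tilde x'H\Delta\tilde x}=c(t)\,\tilde\lambda\,\sqrt{z'Hz}$. Integrating over $t$ and using $\int_0^1\bigl((1-t\tilde\lambda)^{-2}-1\bigr)\,dt=\tfrac{\tilde\lambda}{1-\tilde\lambda}$ bounds the integral term, and combining this with $|z'\nabla^2 f(x)\gamma|=|\gamma'\nabla^2 f(x)z|$ gives the claimed inequality.

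The hard part will be the curvature-mismatch estimate: one has to convert the two-sided operator inequality for $\nabla^2 f(x(t))$ coming from Lemma~\ref{lemmaMatrix} into a single spectral-norm bound that stays integrable and is tight enough in $t$, and the precise choice $c(t)=(1-t\tilde\lambda)^{-2}-1$ is exactly what makes $\int_0^1 c(t)\,dt$ collapse to $\tilde\lambda/(1-\tilde\lambda)$. Everything else -- the manipulations with the two KKT systems and the integral representation of $\nabla f$ -- is routine bookkeeping.
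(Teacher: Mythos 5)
Your proof is correct and takes a genuinely more direct route than the paper's. The paper proves the claim by first passing to the unconstrained reduced problem via the elimination matrix $K$ (range of $K$ equal to the null space of $A$), carefully establishing an isomorphism between the constrained and unconstrained Newton directions, proving the estimate in the reduced $y$-space using the auxiliary function $\kappa(t)=\nabla F(y(t))'z_y+(1-t)\Delta\tilde y'\nabla^2F(y)z_y$ whose derivative they bound and integrate, and then mapping the result back. You bypass all of that: you exploit the two KKT systems directly, observe that $Az=0$ makes every $A'w$ term vanish under $z'(\cdot)$, and use the fundamental theorem of calculus on $\nabla f$ to land immediately on the decomposition $z'\nabla^2f(y)z = z'\nabla^2f(x)\gamma + \int_0^1 z'\bigl(\nabla^2f(x)-\nabla^2f(x(t))\bigr)\Delta\tilde x\,dt$. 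The remaining work is the same curvature-mismatch estimate via Lemma~\ref{lemmaMatrix} and the integral $\int_0^1((1-t\tilde\lambda)^{-2}-1)\,dt=\tilde\lambda/(1-\tilde\lambda)$. Two further remarks. First, your application of the generalized Cauchy--Schwarz inequality is actually cleaner than the paper's: you convert the two-sided operator sandwich into a spectral-norm bound on $H^{-1/2}(H-\nabla^2f(x(t)))H^{-1/2}$ and then apply Cauchy--Schwarz in the $H$-inner product, whereas the paper applies Cauchy--Schwarz directly with the weight $\nabla^2F(y(t))-\nabla^2F(y)$, which need not be positive semidefinite, so your version is the one that is rigorously justified. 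Second, what the paper's elimination machinery buys is reuse: the same isomorphism is invoked again to transfer the unconstrained suboptimality bound (Lemma~\ref{lemma:quadConverg}) from \cite{Boyd} to the constrained problem, so setting it up once pays off later; your argument, while shorter here, would not by itself give that second lemma.
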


\begin{proof}
We first transform problem (\ref{equalityExact}) into an unconstrained one via elimination technique, establish equivalence in the Newton decrements and the Newton primal directions between the two problems following the lines in \cite{Boyd}, then derive the results for the unconstrained problem and lastly we map the result back to the original constrained problem.

Since the matrix $A$  has full column rank, i.e., rank$(A)=L$, in order to eliminate the equality constraints, we
let matrix $K \in \mathbb{R}^{(S+L)\times S}$ be any matrix whose
range is null space of A, with rank$(K)=S$, vector $\hat{x} \in
\mathbb{R}^{S+L}$ be a feasible solution for problem
(\ref{equalityExact}), i.e., $A\hat x = c$.  Then we have the
parametrization of the affine feasible set as
\[\lbrace x | Ax=c \rbrace = \lbrace Ky+\hat{x}|y \in \mathbb{R}^{S}\rbrace.\]
The eliminated equivalent optimization problem becomes
\begin{align}\label{reducedExact}
 \mbox{minimize}_{y\in\mathbb{R}^S}\quad F(y) = f(Ky+\hat{x}).
\end{align}

We next show the Newton primal direction for the constrained problem (\ref{equalityExact}) and unconstrained problem (\ref{reducedExact}) are isomorphic, where a feasible solution $x$ for problem (\ref{equalityExact}) is mapped to $y$ in problem (\ref{reducedExact}) with $Ky+\hat{x} = x$. We start by showing that each $\Delta y$ in the unconstrained problem corresponds uniquely to the Newton direction in the constrained problem.

For the unconstrained problem, the gradient and Hessian are given by
\be\label{eq:dfLemma}
    \nabla F(y) = K'\nabla f(Ky+\hat{x}), \quad \nabla^2 F(y) = K'\nabla ^2 f(Ky+\hat{x})K.
\ee
Note that the objective function $f$ is three times continuously differentiable, which implies its Hessian matrix $\nabla ^2 f(Ky+\hat{x})$ is symmetric, and therefore we have $\nabla^2 F(y)$ is symmetric, i.e., $\nabla^2 F(y)' = \nabla^2 F(y)$.

The Newton direction for problem (\ref{reducedExact}) is given by \be\label{eq:dyLemma}\Delta y = -\left(\nabla ^2 F(y)\right)^{-1} \nabla F(y)= -(K'\nabla^2 f(x)K)^{-1}K'\nabla f(x).\footnote{The matrix $K\nabla^2f(x)K$ is invertible. If for some
$y\in \mathbb{R}^S$, we have $K\nabla^2f(x)K'y=0$, then
$y'K\nabla^2f(x)K'y=\norm{(\nabla^2f(x))^{\frac{1}{2}}K'y}_2=0$, which implies
$\norm{K'x}_2=0$, because the matrix $\nabla^2f(x)$ is strictly positive for all $x$. The rows of
the matrix $K'$ span $\mathbb{R}^S$, therefore we have $y=0$. This
shows that the matrix $K\nabla^2f(x)K'$ is invertible.} \ee

We choose
\begin{equation}\label{eq:wLemma}
    w = -(AA')^{-1}A(\nabla f(x)+\nabla^2f(x)\Delta x),
\end{equation} and show that $(\Delta x, w)$ where \be\label{eq:dxLemma}\Delta x = K\Delta y\ee is the unique solution pair for the linear system (\ref{eq:newtonUpdateLemma}) for the constrained problem (\ref{equalityExact}). To establish the first equation, i.e., $\nabla^2 f(x)\Delta x+A'w=-\nabla f(x)$, we use the property that $\left( \begin{array}{c}K' \\A\end{array}\right)u=\left( \begin{array}{c}K'u \\Au\end{array}\right)=0$ for some $u\in \mathbb{R}^{S+L}$ implies $u=0$.\footnote{If $K'u=0$, then the vector $u$ is orthogonal to the row space of the matrix $K'$, and hence column space of the matrix $K$, i.e., null space of the matrix $A$. If $Au=0$, then $u$ is in the null space of the matrix $A$. Hence the vector $u$ belongs to the set nul$(A)\cap\left(\right.$nul$\left.(A)\right)^\bot$, which implies $u=0$.} We have
\begin{align*}
&\left( \begin{array}{c}
K' \\
A
\end{array}
\right)
\left(\begin{array}{c}
\nabla^2 f(x)\Delta x + A'w + \nabla f(x)
\end{array}
\right)\\
 =&
\left(\begin{array}{c}
K'\nabla^2f(x)K( -(K'\nabla^2 f(x)K)^{-1}K'\nabla f(x)) + K'A'w + K'\nabla f(x)\\
A\nabla^2f(x)\Delta x-A(\nabla f(x)+\nabla^2f(x)\Delta x)+ A\nabla f(x)
\end{array}
\right)\\
 =&
\left(\begin{array}{c}
0\\0\end{array}
\right),
\end{align*}
where the first equality follows from definition of $\Delta x$, $\Delta y$ and $w$ [cf.\ Eqs.\ (\ref{eq:dxLemma}), (\ref{eq:dyLemma}) and (\ref{eq:wLemma})] and the second equality follows the fact that $K'A'w=0$ for any $w$.\footnote{Let $K'A'w=u$, then we have $\norm{u}_2^2 = u'K'A'w = w'AKu$. Since the range of matrix $K$ is the null space of matrix $A$, we have $AKu=0$ for all $u$, hence $\norm{u}_2^2=0$, suggesting $u=0$.} Therefore we conclude that the first equation in (\ref{eq:newtonUpdateLemma}) holds. Since the range of matrix $K$ is the null space of matrix $A$, we have $AKy=0$ for all $y$, therefore the second equation in (\ref{eq:newtonUpdateLemma}) holds, i.e., $A\Delta x = 0$.

For the converse, given a Newton direction $\Delta x$ defined as solution to the system (\ref{eq:newtonUpdateLemma}) for the constrained problem (\ref{equalityExact}), we can uniquely recover a vector $\Delta y$, such that $K\Delta y = \Delta x$. This is because $A\Delta x=0$ from (\ref{eq:newtonUpdateLemma}), and hence $\Delta x$ is in the null space of the matrix $A$, i.e., column space of the matrix $K$. The matrix $K$ has full rank, thus there exists a unique $\Delta y$. Therefore the (primal) Newton directions for problems (\ref{reducedExact}) and (\ref{equalityExact}) are isomorphic under the mapping $K$. In what follows, we perform our analysis for the unconstrained problem (\ref{reducedExact}) and then use isomorphic transformations to show the result hold for the equality constrained problem (\ref{equalityExact}).

Consider the unconstrained problem (\ref{equalityExact}), let
$\Delta y$ denote the exact Newton direction at $y$ [cf.\ Eq.\
(\ref{eq:dfLemma})], vector $\Delta\tilde y$ denote any direction
in $\mathbb{R}^S$, $y(t) = y+t\Delta \tilde y$ and $\tilde\lambda =
\sqrt{\Delta \tilde y'\nabla^2 F(y)\Delta \tilde y}$. Note that with
the isomorphism established earlier, we have $\tilde\lambda =
\sqrt{\Delta \tilde y'\nabla^2 F(y)\Delta \tilde y}= \sqrt{\Delta
\tilde y'K'\nabla^2 f(Ky+\hat x)K\Delta \tilde y} =\sqrt{\Delta
\tilde x'\nabla^2 f(x)\Delta \tilde x}$, where $x=Ky+\hat x$ and
$\Delta \tilde x = K\Delta \tilde y$. From the assumption in the
theorem, we have $\tilde\lambda<1$. For any $t<1$,
$(y-y(t))'\nabla^2 F(y) (y-y(t)) = t^2\tilde\lambda^2<1$ and by
Lemma \ref{lemmaMatrix} for any $z_y$ in $\mathbb{R}^S$, we have
\begin{equation}
    (1-t\tilde\lambda)^2z_y'\nabla^2F(y)z_y\leq z_y'\nabla^2F(y(t))z_y \leq \frac{1}{(1-t\tilde\lambda)^2} z_y' \nabla^2 F(y) z_y \nonumber
\end{equation}
which implies
\begin{equation}\label{ineq:absineq1}
    z_y'(\nabla^2F(y(t))-\nabla^2F(y))z_y \leq \left(\frac{1}{(1-t\tilde\lambda)^2}-1\right)z_y'\nabla^2F(y)z_y,
\end{equation}
and
\begin{equation*}
    z_y'(\nabla^2F(y)-\nabla^2F(y(t)))z_y \leq \left(1-(1-t\tilde\lambda)^2\right)z_y'\nabla^2F(y)z_y.
\end{equation*}
Using the fact that $1-(1-t\tilde\lambda)^2 \leq \frac{1}{(1-t\tilde\lambda)^2} - 1$, the preceding relation can be rewritten as
\begin{equation}\label{ineq:absineq3}
    z_y'(\nabla^2F(y)-\nabla^2F(y(t)))z_y \leq \left(\frac{1}{(1-t\tilde\lambda)^2} - 1\right)z_y'\nabla^2F(y)z_y.
\end{equation}
Combining relations (\ref{ineq:absineq1}) and (\ref{ineq:absineq3}) yields
\begin{equation}\label{ineq:absineqMaster}
    \left|z_y'(\nabla^2F(y)-\nabla^2F(y(t)))z_y\right| \leq \left(\frac{1}{(1-t\tilde\lambda)^2} - 1\right)z_y'\nabla^2F(y)z_y.
\end{equation}

Since the function $F$ is convex, the Hessian matrix $\nabla^2 F(y)$ is positive semidefinite. We can therefore apply the generalized Cauchy-Schwarz inequality and obtain
\begin{align}\label{eq:absDerivative}
& \left|(\Delta \tilde y)'(\nabla^2F(y(t))-\nabla^2F(y))z_y\right|\\ \nonumber
&\leq \sqrt{(\Delta \tilde y)'(\nabla^2F(y(t))-\nabla^2F(y))\Delta \tilde y'} \sqrt{z_y'(\nabla^2F(y(t))-\nabla^2F(y))z_y}\\ \nonumber
&\leq \left(\frac{1}{(1-t\tilde\lambda)^2}-1\right)\sqrt{(\Delta \tilde y)' \nabla^2F(y) \Delta \tilde y}\sqrt{z_y' \nabla^2F(y) z_y} \\
&= \left(\frac{1}{(1-t\tilde\lambda)^2}-1\right)\tilde\lambda\sqrt{z_y' \nabla^2F(y) z_y},\nonumber
\end{align}
where the second inequality follows from relation (\ref{ineq:absineqMaster}), and the equality follows from definition of $\tilde\lambda$.

Define the function $\kappa:\mathbb{R}\to\mathbb{R}$, as $\kappa(t) = \nabla F(y(t))'z_y + (1-t)(\Delta \tilde  y)'\nabla ^2F(y)' z_y$, then
\begin{align*}
    \left|\frac{d}{dt}\kappa(t)\right| = \left|(\Delta\tilde y)'\nabla^2 F(y(t))'z_y-(\Delta \tilde y)'\nabla^2F(y)z_y \right|= \left|(\Delta \tilde y)'(\nabla^2F(y(t))-\nabla^2F(y))z_y\right|,
\end{align*}
which is the left hand side of (\ref{eq:absDerivative}).

Define $\gamma_y = \Delta y - \Delta \tilde y$, which by the isomorphism, implies $\gamma = \Delta x-\Delta \tilde x = K\gamma_y$. By rewriting $\Delta\tilde y = \Delta y - \gamma_y$ and observing the exact Newton direction $\Delta y$ satisfies $\Delta y = -\nabla^2 F(y)^{-1}\nabla F(y)$ [cf.\ Eq.\ (\ref{eq:dfLemma})] and hence by symmetry of the matrix $\nabla^2F(y)$, we have $\Delta y'\nabla^2F(y) = \Delta y'\nabla^2F(y)' = -\nabla F(y)'$, we obtain
\alignShort{\kappa(0)=\nabla F(y)'z_y + (\Delta \tilde  y)'\nabla ^2F(y)' z_y = \nabla F(y)'z_y - \nabla F(y)' z_y-\gamma_y'\nabla ^2F(y)z_y = -\gamma_y'\nabla ^2F(y)z_y.}

Hence by integration, we obtain the bound
\begin{align*}
    \left|\kappa(t) \right| &\leq \tilde\lambda\sqrt{z_y' \nabla^2F(y) z_y}{\mathlarger \int}_{0}^{t} \left(\frac{1}{(1-s\tilde\lambda)^2}-1\right) ds +|\gamma_y'\nabla ^2F(y)z_y|\\
    & = \frac{\tilde\lambda^2t^2}{1-\tilde\lambda t}\sqrt{z_y' \nabla^2F(y) z_y}+|\gamma_y'\nabla ^2F(y)z_y| .
\end{align*}
For $t=1$, $y(t) = y+\Delta\tilde y$, above equation implies
    \begin{eqnarray*}
    \left|\kappa(1) \right| &= \left|\nabla F(y+\Delta \tilde y)'z_y\right|
    & \leq\frac{\tilde\lambda^2}{1-\tilde\lambda}  \sqrt{z_y' \nabla^2F(y) z_y}+| \gamma_y'\nabla ^2F(y)z_y|.
\end{eqnarray*}
We now specify $z_y$ to be the exact Newton direction at $y+\Delta\tilde y$, then $z_y$ satisfies $z_y'\nabla^2 F(y+\Delta \tilde y)z_y = \left|\nabla F(y+\Delta \tilde y)'z_y\right|$, by using the definition of Newton direction at $y+\Delta\tilde y$ [cf.\ Eq.\ (\ref{eq:dyLemma})], which proves
\alignShort{z_y\nabla^2 F(y+\Delta \tilde y)z_y
    & \leq \frac{\tilde\lambda^2}{1-\tilde\lambda}\sqrt{z_y' \nabla^2F(y) z_y} + |\gamma_y'\nabla ^2F(y)'z_y|.}

We now use the isomorphism once more to transform the above relation to the equality constrained problem domain. We have $z = Kz_y$, the exact Newton direction at $x+\Delta \tilde x = \hat x + Ky + K\Delta\tilde y$. The left hand side becomes
\alignShort{z_y'\nabla^2 F(y+\Delta \tilde y)z_y = z_y'K'\nabla^2 f(x+\Delta\tilde x)Kz_y= z'\nabla^2 f(x+\Delta\tilde x)z.}
Similarly, we have the right hand sand satisfies
\alignShort{\frac{\tilde\lambda^2}{1-\tilde\lambda}\sqrt{z_y' \nabla^2F(y) z_y} + |\gamma_y'\nabla ^2F(y)'z_y| &= \frac{\tilde\lambda^2}{1-\tilde\lambda}\sqrt{z_y' K'\nabla^2f(x)K z_y} + |\gamma_y'K'\nabla ^2f(x)Kz_y| \\
&= \frac{\tilde\lambda^2}{1-\tilde\lambda}\sqrt{z' \nabla^2f(x) z} + |\gamma'\nabla ^2f(x)'z|.}
By combining the above two relations, we have established the desired relation.
\end{proof}

One possible matrix $K$ in the above proof for problem (\ref{eqFormulation}) is given by $K = \left(\begin{array}{c}I(S)\\-R\end{array}\right)$, whose corresponding unconstrained domain consists of the source rate variables. In the unconstrained domain, the source rates are updated and then the matrix $K$ adjusts the slack variables accordingly to maintain the feasibility, which coincides with our inexact distributed algorithm in the primal domain. The above lemma will be used to guarantee quadratic rate of
convergence for the distributed inexact Newton method
(\ref{eq:inexaAlgo})]. The next lemma plays a central role in
relating the suboptimality gap in the objective function value to
the exact Newton decrement (see \cite{Boyd} for more details).

\begin{lemma}
Let  $F:\mathbb{R}^n\to \mathbb{R}$ be a self-concordant function.
Consider solving the unconstrained optimization problem
\be\hbox{minimize}_{x\in \mathbb{R}^n}\
F(x),\label{uncons-newton-two}\ee using an (unconstrained) Newton
method. Let $\Delta x$ be the exact Newton direction at $x$, i.e.,
$\Delta x= -\nabla^2 F(x)^{-1}\nabla F(x)$.
 Let $\lambda(x)$ be the exact Newton decrement, i.e.,
$\lambda (x) = \sqrt{(\Delta { x})'\nabla^2 F(x)\Delta {x}}$. Let
$F^*$ denote the optimal value of problem (\ref{uncons-newton-two}).
If $\lambda(x)\leq 0.68$, then we have
\begin{equation}\label{eq:selfConOptimal}
    F^*\geq F(x) - \lambda(x)^2.
\end{equation}
\end{lemma}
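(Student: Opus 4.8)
The plan is to reduce the statement to a one-dimensional estimate along rays through $x$ via self-concordance, and then minimize an explicit scalar lower bound. First I would observe that it suffices to prove the sharper inequality $F^\ast \ge F(x) + \lambda(x) + \log(1-\lambda(x))$, because a short one-variable argument shows $-\lambda - \log(1-\lambda) \le \lambda^2$ for every $\lambda \in [0,0.68]$: the function $h(\lambda) = \lambda^2 + \lambda + \log(1-\lambda)$ vanishes at $\lambda = 0$, has derivative $h'(\lambda) = \lambda(1-2\lambda)/(1-\lambda)$ (so it increases on $[0,1/2]$ and decreases afterwards), and remains positive at $\lambda = 0.68$. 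This last numerical fact is precisely where the constant $0.68$ enters, and it forces $h \ge 0$ on $[0,0.68]$.

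For the sharper inequality, fix an arbitrary $y$ in the domain of $F$; the case $y = x$ is trivial since $\lambda(x) + \log(1-\lambda(x)) \le 0$, so assume $y \neq x$. Since $\nabla^2 F(x)$ is positive definite, set $t = \left((y-x)'\nabla^2 F(x)(y-x)\right)^{1/2} > 0$ and $v = (y-x)/t$, so that $v'\nabla^2 F(x) v = 1$ and $y = x + tv$. Consider the self-concordant function $\tilde g(s) = F(x+sv)$; then $\tilde g''(0) = 1$, and by the Cauchy--Schwarz inequality applied after inserting $\nabla^2 F(x)^{1/2}$ and $\nabla^2 F(x)^{-1/2}$, together with the identity $\lambda(x)^2 = \nabla F(x)'\nabla^2 F(x)^{-1}\nabla F(x)$ coming from the definition of $\Delta x$, we obtain $\tilde g'(0) = \nabla F(x)'v \ge -\lambda(x)$.

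The key step is the lower-bound companion to Lemma \ref{lemma:selfConcord}: for a self-concordant $\tilde g$ with $\tilde g''(0) > 0$ one has, for all $s \ge 0$ in its domain, $\tilde g(s) \ge \tilde g(0) + s\tilde g'(0) + s\tilde g''(0)^{1/2} - \log(1 + s\tilde g''(0)^{1/2})$; this follows by integrating $|\tilde g'''| \le 2(\tilde g'')^{3/2}$ twice, exactly as in the derivation of Lemma \ref{lemma:selfConcord} but keeping the opposite sign, and unlike the upper bound it carries no restriction on $s$. Substituting $\tilde g''(0) = 1$ and $\tilde g'(0) \ge -\lambda(x)$ gives $F(y) = \tilde g(t) \ge F(x) + t(1-\lambda(x)) - \log(1+t)$. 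Minimizing the right-hand side over $t \ge 0$ — valid since $\lambda(x) \le 0.68 < 1$, with minimizer $t^\ast = \lambda(x)/(1-\lambda(x)) \ge 0$ — yields $F(y) \ge F(x) + \lambda(x) + \log(1-\lambda(x))$. As $y$ was arbitrary in the domain, taking the infimum over $y$ gives $F^\ast \ge F(x) + \lambda(x) + \log(1-\lambda(x))$, and the scalar bound from the first paragraph completes the proof.

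I expect the main obstacle to be justifying the lower bound for self-concordant functions used in the third paragraph, since the excerpt records only the upper bound (Lemma \ref{lemma:selfConcord}); I would either rederive it by the same double-integration argument or cite \cite{Boyd}. The remaining points are routine: verifying that the numerical threshold $0.68$ is exactly what validates $-\lambda - \log(1-\lambda) \le \lambda^2$, and confirming that writing every $y$ as $x + tv$ with $v'\nabla^2 F(x) v = 1$ legitimately exhausts the domain so that passing to the infimum is valid.
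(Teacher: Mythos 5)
The paper does not prove this lemma---it simply cites \cite{Boyd} (see the sentence preceding the lemma in the source). Your proof is a correct and complete reconstruction of the standard argument from Boyd's \emph{Convex Optimization} (\S 9.6.3): the two-step integration of the self-concordance inequality to obtain the lower-bound companion of Lemma~\ref{lemma:selfConcord}, the Cauchy--Schwarz estimate $\tilde g'(0)\geq -\lambda(x)$, the one-dimensional minimization yielding $F^\ast\geq F(x)+\lambda(x)+\log(1-\lambda(x))$, and the numerical observation that $\lambda^2+\lambda+\log(1-\lambda)\geq 0$ on $[0,0.68]$ (with $h(0.68)\approx 0.003>0$) are all exactly as in that reference, and the claimed derivative $h'(\lambda)=\lambda(1-2\lambda)/(1-\lambda)$ and minimizer $t^\ast=\lambda/(1-\lambda)$ check out. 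Since the paper defers to this very source, your argument matches the approach the paper intends, and I see no gaps.
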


Using the same elimination technique and isomorphism established for
Lemma \ref{lemma:nexStep}, the next result follows immediately.

\begin{lemma}\label{lemma:quadConverg}
Let  $f:\mathbb{R}^n\to \mathbb{R}$ be a self-concordant function. Consider solving the equality
constrained optimization problem
\begin{align}\label{equalityExact-two}
 \mbox{minimize} \quad &f(x)\\\nonumber
 \mbox{subject to} \quad &Ax = c,
\end{align}using a constrained Newton
method with feasible initialization. Let $\Delta x$ be the exact (primal) Newton direction at $x$, i.e., $\Delta x$ solves the system
\begin{align*}
\left( \begin{array}{cc}
\nabla^2 f({x})& A' \\
A & 0
\end{array}
\right)
\left(\begin{array}{c}
\Delta  x\\
w
\end{array}
\right) =
-\left(\begin{array}{c}
\nabla f({x})\\
0
\end{array}
\right).
\end{align*}
Let $\lambda(x)$ be the exact Newton decrement, i.e.,
$\lambda (x) = \sqrt{(\Delta { x})'\nabla^2 f(x)\Delta {x}}$. Let
$f^*$ denote the optimal value of problem (\ref{equalityExact-two}).
If $\lambda(x)\leq 0.68$, then we have
\begin{equation}\label{eq:selfConOptimal}
    f^*\geq f(x) - \lambda(x)^2.
\end{equation}
\end{lemma}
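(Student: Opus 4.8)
The plan is to reduce Lemma \ref{lemma:quadConverg} to the immediately preceding (unconstrained) lemma by exactly the elimination-plus-isomorphism device already set up in the proof of Lemma \ref{lemma:nexStep}. Concretely, since $A$ has full row rank, I would pick a matrix $K$ whose range is the null space of $A$ with $\mathrm{rank}(K)=S$, fix a feasible point $\hat x$ with $A\hat x = c$, and parametrize the affine feasible set as $\{Ky+\hat x \mid y\in\mathbb{R}^S\}$. This transforms problem (\ref{equalityExact-two}) into the unconstrained problem $\min_{y\in\mathbb{R}^S} F(y)=f(Ky+\hat x)$, whose objective is self-concordant because $f$ is and self-concordance is preserved under affine composition.

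Next I would invoke the correspondence established in Lemma \ref{lemma:nexStep}: if $\Delta x$ is the exact Newton direction for the constrained problem at $x=Ky+\hat x$ and $\Delta y$ is the exact Newton direction for the unconstrained problem at $y$, then $\Delta x = K\Delta y$, and the Newton decrements coincide,
\[
\lambda(x)=\sqrt{(\Delta x)'\nabla^2 f(x)\Delta x}=\sqrt{(\Delta y)'K'\nabla^2 f(x)K\Delta y}=\sqrt{(\Delta y)'\nabla^2 F(y)\Delta y}=\lambda(y).
\]
Since the map $y\mapsto Ky+\hat x$ is a bijection between $\mathbb{R}^S$ and the feasible set of (\ref{equalityExact-two}) with $F(y)=f(Ky+\hat x)$, the optimal values also coincide, $f^*=F^*$, and the current objective value is $f(x)=F(y)$.

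Finally, I would apply the preceding unconstrained lemma to $F$ at the point $y$: the hypothesis $\lambda(x)\le 0.68$ is precisely $\lambda(y)\le 0.68$, so we get $F^*\ge F(y)-\lambda(y)^2$. Substituting $F^*=f^*$, $F(y)=f(x)$, and $\lambda(y)=\lambda(x)$ yields $f^*\ge f(x)-\lambda(x)^2$, as desired.

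I do not expect a genuine obstacle here — all the nontrivial work (invertibility of $K'\nabla^2 f(x)K$, the identification $\Delta x=K\Delta y$, equality of decrements, and the isomorphism of feasible sets) has already been carried out inside the proof of Lemma \ref{lemma:nexStep}, so the only thing to be careful about is to state the correspondence of the \emph{optimal values} explicitly, which follows immediately from the bijectivity of the parametrization; this is the one ingredient not literally reused verbatim from Lemma \ref{lemma:nexStep}, though it is elementary.
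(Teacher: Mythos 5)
Your proposal is correct and follows exactly the same route the paper takes: the paper dismisses the proof with ``Using the same elimination technique and isomorphism established for Lemma \ref{lemma:nexStep}, the next result follows immediately,'' which is precisely the reduction-to-unconstrained argument you spell out. The one point you flag as needing to be stated explicitly --- equality of the optimal values $f^* = F^*$ via bijectivity of the parametrization $y \mapsto Ky + \hat{x}$ --- is indeed the only ingredient not already written inside the proof of Lemma \ref{lemma:nexStep}, and you handle it correctly.
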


Note that the relation on the suboptimality gap in the preceding
lemma holds when the exact Newton decrement is sufficiently small
(provided by the numerical bound 0.68, see \cite{Boyd}). We will use
these lemmas in the subsequent sections for the convergence rate
analysis of the distributed inexact Newton method applied to problem
(\ref{eqFormulation}). Our analysis comprises of two parts: The
first part is the {\it damped convergent phase}, in which we provide
a lower bound on the improvement in the objective function value at
each step by a constant. The second part is the {\it quadratically
convergent phase}, in which the suboptimality in the objective
function value diminishes quadratically to an error level.

\subsubsection{Basic Relations}
We first introduce some key relations, which provides a bound on the
error in the Newton direction computation. This will be used for
both phases of the convergence analysis.

\begin{lemma}\label{lemma:gammaLambda}
Let $\{x^k\}$ be the primal sequence generated by the inexact Newton
method (\ref{eq:inexaAlgo}). Let $\tilde \lambda(x^k)$ be the
inexact Newton decrement at $x^k$ [cf.\ Eq.\ (\ref{eq:lambdaInexact})]. For
all $k$, we have
\begin{equation*}
|(\gamma^k)'\nabla^2 f(x^k) \Delta \tilde{x}^k|  \leq    p\tilde{\lambda}(
x^k)^2 + \tilde{\lambda}(x^k)\sqrt{\epsilon},
\end{equation*}
where $\gamma^k$, $p$, and $\epsilon$ are nonnegative scalars defined
in Assumption \ref{ass:errorBoundEps}.
\end{lemma}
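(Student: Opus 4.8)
The plan is to start from the bound on the weighted norm of $\gamma^k$ supplied by Assumption \ref{ass:errorBoundEps} and convert it into a bound on the bilinear (cross) term $(\gamma^k)'\nabla^2 f(x^k)\Delta\tilde x^k$ via a Cauchy--Schwarz step. Concretely, recall that $H_k=\nabla^2 f(x^k)$ is (symmetric) positive definite at every iterate — this was established in Section \ref{sec:alg} using Assumption \ref{asmp:utility} and the strict positivity of $x^k$ from Theorem \ref{theorem:tildeFeasible} — so $\langle u,v\rangle_{H_k}=u'H_k v$ is a genuine inner product. Applying the Cauchy--Schwarz inequality for this inner product gives
\[
|(\gamma^k)'\nabla^2 f(x^k)\Delta\tilde x^k|\ \leq\ \sqrt{(\gamma^k)'\nabla^2 f(x^k)\gamma^k}\;\sqrt{(\Delta\tilde x^k)'\nabla^2 f(x^k)\Delta\tilde x^k}\ =\ \sqrt{(\gamma^k)'\nabla^2 f(x^k)\gamma^k}\;\tilde\lambda(x^k),
\]
where the last equality is just the definition of the inexact Newton decrement in Eq.\ (\ref{eq:lambdaInexact}).

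Next I would bound the first square-root factor. By Assumption \ref{ass:errorBoundEps} and the definition of $\tilde\lambda(x^k)$ we have $(\gamma^k)'\nabla^2 f(x^k)\gamma^k\leq p^2\tilde\lambda(x^k)^2+\epsilon$, and then using subadditivity of the square root, i.e.\ $\sqrt{a+b}\leq\sqrt{a}+\sqrt{b}$ for nonnegative $a,b$, together with $p\geq 0$ and $\tilde\lambda(x^k)\geq 0$, this yields
\[
\sqrt{(\gamma^k)'\nabla^2 f(x^k)\gamma^k}\ \leq\ \sqrt{p^2\tilde\lambda(x^k)^2+\epsilon}\ \leq\ p\,\tilde\lambda(x^k)+\sqrt{\epsilon}.
\]
Substituting this into the Cauchy--Schwarz estimate above gives $|(\gamma^k)'\nabla^2 f(x^k)\Delta\tilde x^k|\leq\big(p\,\tilde\lambda(x^k)+\sqrt\epsilon\big)\tilde\lambda(x^k)=p\,\tilde\lambda(x^k)^2+\tilde\lambda(x^k)\sqrt{\epsilon}$, which is exactly the claimed inequality.

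There is essentially no serious obstacle here; the argument is a routine two-line chain of Cauchy--Schwarz followed by concavity/subadditivity of $\sqrt{\cdot}$. The only point that warrants a careful mention is the justification that $\nabla^2 f(x^k)$ is positive (semi)definite so that the Cauchy--Schwarz step is legitimate, which is immediate from the earlier structural observations on the Hessian (Eq.\ (\ref{eq:Hdiag})) and the feasibility/positivity result of Theorem \ref{theorem:tildeFeasible}.
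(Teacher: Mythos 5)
Your proposal is correct and follows essentially the same route as the paper: generalized Cauchy--Schwarz in the $H_k$-inner product, then Assumption \ref{ass:errorBoundEps}, then the elementary bound $\sqrt{p^2\tilde\lambda^2+\epsilon}\leq p\tilde\lambda+\sqrt\epsilon$. The paper phrases that last step by inserting the nonnegative cross term $2p\sqrt\epsilon\,\tilde\lambda(x^k)^3$ under the radical to complete a square, whereas you invoke subadditivity of $\sqrt{\cdot}$, but these are the same observation.
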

\begin{proof}
By Assumption \ref{asmp:utility}, the Hessian matrix $\nabla^2 f(x^k)$ is positive definite for all $x^k$. We therefore can apply the generalized Cauchy-Schwarz inequality and obtain
\begin{align}\label{ineq:basics}
 |(\gamma^k)'\nabla^2 f(x^k) \Delta \tilde{x}^k|&\leq \sqrt{((\gamma^k)' \nabla^2 f(x^k) \gamma^k)((\Delta \tilde{x}^k)'\nabla^2 f(x^k) \Delta \tilde{x}^k)}\\ \nonumber
&\leq \sqrt{(p^2 \tilde{\lambda}(x^k)^2+\epsilon) \tilde{\lambda}(x^k)^2}\\ \nonumber
&\leq \sqrt{(p^2 \tilde{\lambda}(x^k)^2+\epsilon+2p \tilde{\lambda}(x^k)\sqrt{\epsilon})  \tilde{\lambda}(x^k)^2},
\end{align}
where the second inequality follows from Assumption \ref{ass:errorBoundEps} and definition of $\tilde\lambda(x^k)$, and the third inequality follows by adding the nonnegative term $2p \sqrt{\epsilon}\tilde{\lambda}(x^k)^3$ to the right hand side.
By the nonnegativity of the inexact Newton decrement $\tilde{\lambda}(x^k)$, it can be seen that relation (\ref{ineq:basics}) implies
\be
|(\gamma^k)' \nabla^2 f(x^k) \Delta \tilde{x}^k| \leq  \tilde{\lambda}(x^k)(p \tilde{\lambda}(x^k)+\sqrt \epsilon ) = p\tilde{\lambda}(x^k)^2+ \tilde{\lambda}(x^k)\sqrt{\epsilon},\nonumber
\ee
which proves the desired relation.
\end{proof}

Using the preceding lemma, the following basic relation can be
established, which will be used to measure the improvement in the
objective function value.

\begin{lemma}\label{lemma:tildeF}
Let $\{x^k\}$ be the primal sequence generated by the inexact Newton
method (\ref{eq:inexaAlgo}). Let $\tilde{f}_k$ be the objective
function along the Newton direction and  $\tilde \lambda (x^k)$ be
the inexact Newton decrement [cf.\ Eqs.\ (\ref{eq:ftilde}) and
(\ref{eq:lambdaInexact})] at $x^k$ respectively. For all $k$ with $0\leq t <
1/{\tilde{\lambda}(x^k)}$, we have
\begin{align}\label{eq:selfConIneqInexactEps}
    \tilde{f}_k(t) \leq  \tilde{f}_k(0) - t(1-p)\tilde{\lambda}(x^k)^2 - (1-\sqrt\epsilon) t\tilde{\lambda}(x^k)-\log(1-t\tilde{\lambda}(x^k)),
\end{align}
where $p$, and $\epsilon$ are the nonnegative scalars defined in Assumption \ref{ass:errorBoundEps}.
\end{lemma}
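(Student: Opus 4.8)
The plan is to apply Lemma \ref{lemma:selfConcord} to the self-concordant function $\tilde{f}_k$ and then control the linear term $\tilde{f}_k'(0)$ using the error bound from Lemma \ref{lemma:gammaLambda}. First I would record the values $\tilde{f}_k''(0) = (\Delta\tilde x^k)'\nabla^2 f(x^k)\Delta\tilde x^k = \tilde\lambda(x^k)^2$, so that $\tilde{f}_k''(0)^{1/2} = \tilde\lambda(x^k)$; thus the hypothesis $t\tilde{f}_k''(0)^{1/2} < 1$ of Lemma \ref{lemma:selfConcord} is exactly the stated condition $0 \le t < 1/\tilde\lambda(x^k)$. Applying that lemma directly gives
\begin{equation*}
\tilde{f}_k(t) \le \tilde{f}_k(0) + t\tilde{f}_k'(0) - t\tilde\lambda(x^k) - \log\bigl(1 - t\tilde\lambda(x^k)\bigr),
\end{equation*}
so the whole task reduces to bounding $\tilde{f}_k'(0) = \nabla f(x^k)'\Delta\tilde x^k$ from above.

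Next I would relate $\nabla f(x^k)'\Delta\tilde x^k$ to $\tilde\lambda(x^k)^2$. The exact Newton system (\ref{eq:newtonUpdate}) gives $\nabla f(x^k) + H_k\Delta x^k + A'w^k = 0$, and since $A\Delta\tilde x^k = 0$ by the feasibility-preserving construction (\ref{eq:dxPrimal}), we have $(A'w^k)'\Delta\tilde x^k = (w^k)'A\Delta\tilde x^k = 0$. Hence $\nabla f(x^k)'\Delta\tilde x^k = -(\Delta x^k)'H_k\Delta\tilde x^k$. Writing $\Delta x^k = \Delta\tilde x^k + \gamma^k$ (Assumption \ref{ass:errorBoundEps}) yields
\begin{equation*}
\nabla f(x^k)'\Delta\tilde x^k = -(\Delta\tilde x^k)'H_k\Delta\tilde x^k - (\gamma^k)'H_k\Delta\tilde x^k = -\tilde\lambda(x^k)^2 - (\gamma^k)'H_k\Delta\tilde x^k.
\end{equation*}
Now I invoke Lemma \ref{lemma:gammaLambda}, which bounds $|(\gamma^k)'\nabla^2 f(x^k)\Delta\tilde x^k| \le p\tilde\lambda(x^k)^2 + \tilde\lambda(x^k)\sqrt{\epsilon}$, so $\nabla f(x^k)'\Delta\tilde x^k \le -\tilde\lambda(x^k)^2 + p\tilde\lambda(x^k)^2 + \tilde\lambda(x^k)\sqrt{\epsilon} = -(1-p)\tilde\lambda(x^k)^2 + \sqrt{\epsilon}\,\tilde\lambda(x^k)$.

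Finally I would substitute this bound on $\tilde{f}_k'(0)$ into the self-concordance inequality: the term $t\tilde{f}_k'(0)$ becomes $\le -t(1-p)\tilde\lambda(x^k)^2 + t\sqrt{\epsilon}\,\tilde\lambda(x^k)$, and combining the $t\sqrt{\epsilon}\,\tilde\lambda(x^k)$ contribution with the existing $-t\tilde\lambda(x^k)$ term gives $-(1-\sqrt{\epsilon})t\tilde\lambda(x^k)$, producing exactly (\ref{eq:selfConIneqInexactEps}). I do not anticipate a serious obstacle here; the one point requiring care is the sign bookkeeping in the identity $\nabla f(x^k)'\Delta\tilde x^k = -(\Delta x^k)'H_k\Delta\tilde x^k$ and the correct use of $A\Delta\tilde x^k = 0$ (which holds for the inexact direction by construction, not merely for the exact one) — everything downstream of that is routine substitution.
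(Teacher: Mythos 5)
Your proposal is correct and follows essentially the same approach as the paper's proof: both use the exact Newton system together with $A\Delta\tilde x^k = 0$ to express $\nabla f(x^k)'\Delta\tilde x^k$ in terms of $\tilde\lambda(x^k)^2$ and $(\gamma^k)'H_k\Delta\tilde x^k$, bound the latter via Lemma \ref{lemma:gammaLambda}, and then substitute into the self-concordance inequality of Lemma \ref{lemma:selfConcord}. The only difference is superficial ordering (you state the self-concordance inequality up front and then derive the gradient bound; the paper does the reverse), so no substantive comparison is needed.
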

\begin{proof}
Recall that $\Delta x^k$ is the exact Newton direction, which solves the system (\ref{eq:newtonUpdate}). Therefore for some $w^k$, the following equation is satisfied,
\begin{equation*}
    \nabla ^2 f(x^k) \Delta x^k + A' w^k = -\nabla f(x^k).
\end{equation*}
By left multiplying the above relation by $(\Delta \tilde{x}^k)'$, we obtain
\begin{equation}
        (\Delta \tilde{x}^k)'\nabla ^2 f(x^k) \Delta x^k + (\Delta \tilde{x}^k)' A' w^k = -(\Delta\tilde{x}^k)'\nabla f(x^k).\\ \nonumber
\end{equation}
Using the facts that $\Delta x^k = \Delta \tilde{x}^k+ \gamma^k$ from Assumption \ref{ass:errorBoundEps} and $A\Delta \tilde{x}^k = 0$ by the design of our algorithm, the above relation yields
\begin{equation*}
    (\Delta \tilde{x}^k)'\nabla ^2 f(x) \Delta \tilde{x}^k + (\Delta \tilde{x}^k)'\nabla ^2 f(x^k) \gamma^k = -(\Delta\tilde{x}^k)'\nabla f(x^k).
\end{equation*}
By Lemma \ref{lemma:gammaLambda}, we can bound $(\Delta \tilde{x}^k)'\nabla ^2 f(x^k) \gamma^k$ by,
\begin{equation}
     p\tilde{\lambda}(x^k)^2 + \tilde{\lambda}(x^k)\sqrt{\epsilon}\geq (\Delta \tilde{x}^k)'\nabla^2 f(x^k) \gamma^k \geq -p\tilde{\lambda}(x^k)^2 - \tilde{\lambda}(x^k)\sqrt{\epsilon}.\nonumber
\end{equation}
Using the definition of $\tilde{\lambda}(x^k)$ [cf.\ Eq.\ (\ref{eq:lambdaInexact})] and the preceding two relations, we obtain the following bounds on $(\Delta\tilde{x}^k)'\nabla f(x^k)$:
\begin{equation*}
-(1+p)\tilde{\lambda}(x^k)^2 - \tilde{\lambda}(x^k)\sqrt{\epsilon}\leq   (\Delta\tilde{x}^k)'\nabla f(x^k) \leq -(1-p)\tilde{\lambda}(x^k)^2 + \tilde{\lambda}(x^k)\sqrt{\epsilon}.
\end{equation*}
By differentiating the function $\tilde{f_k}(t)$, and using the preceding relation, this yields,
\begin{align}\label{ineq:dfInexactEps}
    \tilde{f}_k'(0) & = \nabla f(x^k)'\Delta \tilde{x}^k\\ \nonumber
    &\leq -(1-p)\tilde{\lambda}(x^k)^2 + \tilde{\lambda}(x^k)\sqrt{\epsilon}.
\end{align}
Moreover, we have
\begin{align}\label{eq:boundf''}
    \tilde{f}_k''(0) &= (\Delta \tilde{x}^k)' \nabla^2 f(x^k) \Delta \tilde{x}^k\\\nonumber
    & = \tilde{\lambda}(x^k)^2.
\end{align}
The function $\tilde f_k(t)$ is self-concordant for all $k$, therefore by Lemma \ref{lemma:selfConcord}, for  $0\leq t < 1/{\tilde{\lambda}(x^k)}$, the following relations hold:
\begin{align*}
    \tilde{f}_k(t) &\leq \tilde{f}_k(0) + t\tilde{f}_k'(0) - t\tilde{f}_k''(0)^{\frac{1}{2}} - \log(1-tf_k''(0)^{\frac{1}{2}})\\
&\leq \tilde{f}_k(0) - t(1-p)\tilde{\lambda}(x^k)^2 + t\tilde{\lambda}(x^k)\sqrt{\epsilon}-t\tilde{\lambda}(x^k)-\log(1-t\tilde{\lambda}(x^k))\nonumber\\
 &=  \tilde{f}_k(0) - t(1-p)\tilde{\lambda}(x^k)^2 - (1-\sqrt\epsilon) t\tilde{\lambda}(x^k)-\log(1-t\tilde{\lambda}(x^k)),
\end{align*}
where the second inequality follows by Eqs.\
(\ref{ineq:dfInexactEps}) and (\ref{eq:boundf''}). This proves Eq.\
(\ref{eq:selfConIneqInexactEps}).
\end{proof}

The preceding lemma shows that a careful choice of the stepsize $t$
can guarantee a constant lower bound on the improvement in the
objective function value at each iteration. We present the
convergence properties of our algorithm in the following two
sections.

\subsubsection{Damped Convergent Phase}\label{sec:damped}
In this section, we consider the case when $\theta^k \geq  V$ and
stepsize $d^k = \frac{b}{\theta^k+1}$ [cf.\ Eq.
(\ref{eq:stepsize})]. We will provide a constant lower bound on the
improvement in the objective function value in this case. To this
end, we first establish the improvement bound for the exact stepsize
choice of $t = 1/(\tilde{\lambda}(x^k)+1)$.

\begin{theorem}\label{thm:Damped}
Let $\{x^k\}$ be the primal sequence generated by the inexact Newton
method (\ref{eq:inexaAlgo}). Let $\tilde{f}_k$ be the objective
function along the Newton direction and  $\tilde \lambda (x^k)$ be
the inexact Newton decrement at $x^k$ [cf.\ Eqs.\ (\ref{eq:ftilde}) and
(\ref{eq:lambdaInexact})]. Consider the scalars $p$ and $\epsilon$
defined in Assumption \ref{ass:errorBoundEps} and assume that
$0<p<\frac{1}{2}$ and
$0<\epsilon<\left(\frac{\left(0.5-p\right)(2Vb-V+b-1)}{b}\right)^2$,
where $b$ is the constant used in the stepsize rule [cf.\ Eq.\
(\ref{eq:stepsize})]. For $\theta^k \geq  V$ and $t =
1/\left(\tilde{\lambda}(x^k)+1\right)$, there exists a scalar
$\alpha>0$ such that
\begin{align}\label{ineq:dampedProp}
    \tilde{f}_k(t) - \tilde{f}_k(0) &\leq -\alpha\left(1+p\right) \left(\frac{2Vb-V+b-1}{b}\right)^2
    \Big/\left(1+\frac{2Vb-V+b-1}{b}\right).
\end{align}

\end{theorem}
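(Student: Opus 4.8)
The plan is to start from the basic descent inequality of Lemma~\ref{lemma:tildeF}, specialize it to the stepsize $t = 1/\left(\tilde\lambda(x^k)+1\right)$, and then bound the logarithmic term so that only a negative quadratic in $\tilde\lambda(x^k)$ survives. Write $\lambda = \tilde\lambda(x^k)$ for brevity. Since $t = 1/(\lambda+1) < 1/\lambda$, Lemma~\ref{lemma:tildeF} applies and yields
\begin{align*}
\tilde f_k(t) - \tilde f_k(0) \le -t(1-p)\lambda^2 - (1-\sqrt\epsilon)t\lambda - \log(1-t\lambda).
\end{align*}
Using $1-t\lambda = 1/(1+\lambda)$ together with the elementary inequality $-x-\log(1-x) \le \frac{x^2}{2(1-x)}$, valid for $x\in[0,1)$ (since $-x-\log(1-x)=\sum_{n\ge 2}x^n/n$ and each coefficient $1/n\le 1/2$), applied with $x = t\lambda = \lambda/(1+\lambda)$, the right-hand side collapses to
\begin{align*}
\tilde f_k(t)-\tilde f_k(0) \le \frac{\lambda}{1+\lambda}\left(-\left(\frac{1}{2}-p\right)\lambda + \sqrt\epsilon\right).
\end{align*}
This is where the hypothesis $p<\frac{1}{2}$ enters, making the coefficient of $\lambda$ inside the parentheses strictly negative.

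Next I would convert the condition $\theta^k \ge V$ into a lower bound on $\lambda$. Since $\tau^k = \lambda - \theta^k$ and $|\tau^k| \le \left(\frac{1}{b}-1\right)(1+V)$ by Assumption~\ref{ass:stepsize}, we obtain
\begin{align*}
\lambda \ge V - \left(\frac{1}{b}-1\right)(1+V) = \frac{2bV-V+b-1}{b} =: \xi,
\end{align*}
and the standing assumption $b > \frac{V+1}{2V+1}$ (cf.\ Theorem~\ref{theorem:tildeFeasible}) guarantees $\xi>0$. Because $\lambda\mapsto -\left(\frac{1}{2}-p\right)\lambda + \sqrt\epsilon$ is decreasing and $\lambda\mapsto \lambda/(1+\lambda)$ is increasing and nonnegative, on the range $\lambda\ge\xi$ the preceding bound is at most $-\frac{\xi}{1+\xi}\left(\left(\frac{1}{2}-p\right)\xi - \sqrt\epsilon\right)$. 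The assumed bound $\epsilon < \left(\frac{(0.5-p)(2Vb-V+b-1)}{b}\right)^2 = \left(\left(\frac{1}{2}-p\right)\xi\right)^2$ is precisely what makes $\beta := \left(\frac{1}{2}-p\right)\xi - \sqrt\epsilon$ strictly positive, so $\tilde f_k(t)-\tilde f_k(0) \le -\beta\,\xi/(1+\xi)$.

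Finally, I would recast this in the form of the statement by setting $\alpha = \beta/\left((1+p)\xi\right)>0$, so that $\beta\,\xi/(1+\xi) = \alpha(1+p)\,\xi^2/(1+\xi)$, which is exactly (\ref{ineq:dampedProp}); note that $\alpha$ depends only on the fixed constants $p,\epsilon,V,b$ and not on $k$. The argument is essentially careful bookkeeping; the one genuine choice is which bound to use on $-x-\log(1-x)$: the quadratic bound $\frac{x^2}{2(1-x)}$ is what produces the clean factor $\frac{1}{2}-p$ and makes the hypothesis on $\epsilon$ exactly the threshold for negativity of the per-step decrement. The only other mild subtlety, translating the algorithmic quantity $\theta^k\ge V$ into the analytic lower bound $\lambda\ge\xi$ via Assumption~\ref{ass:stepsize}, is routine, and everything else reduces to monotonicity of scalar functions.
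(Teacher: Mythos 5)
Your proof is correct and follows the same essential route as the paper: Lemma \ref{lemma:tildeF} at $t = 1/(\tilde\lambda(x^k)+1)$, the same second-order Taylor bound on the logarithm (equivalent to $\log(1+y)\le y - \tfrac{y^2}{2(1+y)}$), the lower bound $\tilde\lambda(x^k)\geq (2Vb-V+b-1)/b$ from Assumption \ref{ass:stepsize}, and a concluding monotonicity step. The only differences are presentational --- you derive the decrement bound forward and then read off the explicit $\alpha = \bigl((\tfrac12-p)\xi - \sqrt\epsilon\bigr)/\bigl((1+p)\xi\bigr)$, which is exactly the upper endpoint of the admissible range the paper fixes in advance --- together with one small ordering caveat: your two-factor monotonicity step (decreasing times increasing-nonnegative) is valid only because $(\tfrac12-p)\xi - \sqrt\epsilon > 0$, so that sign observation, which you do state via the hypothesis on $\epsilon$, should logically precede rather than follow the claim that the bound is maximized at $\lambda=\xi$.
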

\begin{proof}
For notational simplicity, let $y = \tilde{\lambda}\left(x^k\right)$
in this proof. We will show that for any positive scalar $\alpha$
with $0<\alpha\leq\left(\frac{1}{2}-p-\frac{\sqrt{\epsilon}
b}{(2Vb-V+b-1)}\right)/\left(p+1\right)$, Eq.\
(\ref{ineq:dampedProp}) holds. Note that such $\alpha$ exists since
$\epsilon < \left(\frac{\left(0.5-p\right)(2Vb-V+b-1)}{b}\right)^2$.

By Assumption \ref{ass:stepsize}, we have for $\theta^k\geq V$,
\be\label{ineq:yBound}y\geq\theta^k-\left(\frac{1}{b}-1\right)(1+V)\geq
V-\left(\frac{1}{b}-1\right)(1+V)=\frac{2Vb-V+b-1}{b}.\ee Using
$b>\frac{V+1}{2V+1}$, we have $y\geq
V-\left(\frac{1}{b}-1\right)(1+V)>0$, which implies $2Vb-V+b-1>0$.
Together with $0<\alpha\leq\left(\frac{1}{2}-p-\frac{\sqrt{\epsilon}
b}{2Vb-V+b-1}\right)/\left(p+1\right)$ and $b>\frac{V+1}{2V+1}$,
this shows
\begin{align*}
\sqrt{\epsilon} \leq \frac{2Vb-V+b-1}{b}\left(\frac{1}{2}-p-\alpha\left(1+p\right)\right).
\end{align*}Combining the above, we obtain
\begin{align*}
\sqrt{\epsilon} \leq
y\left(\frac{1}{2}-p-\alpha\left(1+p\right)\right),
\end{align*}
which using algebraic manipulation yields
\begin{equation*}
-\left(1-p\right)y-\left(1-\sqrt{\epsilon}\right)+\left(1+y\right)-\frac{y}{2} \leq -\alpha\left(1+p\right)y.
\end{equation*}
From Eq.\ (\ref{ineq:yBound}), we have $y>0$. We can therefore
multiply by $y$ and divide by $1+y$ both sides of the above
inequality to obtain
\begin{align}\label{ineq:dampedkey2}
-\frac{1-p}{1+y}y^2-\frac{1-\sqrt{\epsilon}}{1+y}y  +y - \frac{y^2}{2\left(1+y\right)} \leq -\alpha \frac{\left(1+p\right)y^2}{1+y}
\end{align}
Using second order Taylor expansion on $\log\left(1+y\right)$, we have for $y \geq 0$
\begin{equation*}
    \log\left(1+y\right) \leq y - \frac{y^2}{2\left(1+y\right)}.
\end{equation*}
Using this relation in Eq.\ (\ref{ineq:dampedkey2}) yields,
\begin{align*}
-\frac{1-p}{1+y}y^2-\frac{1-\sqrt{\epsilon}}{1+y}y+\log\left(1+y\right) \leq -\alpha\frac{\left(1+p\right)y^2}{1+y}.
\end{align*}
Substituting the value of $t = 1/\left(y+1\right)$, the above
relation can be rewritten as
\alignShort{-\left(1-p\right)ty^2-\left(1-\sqrt{\epsilon}\right)ty-\log\left(1-ty\right)
\leq -\alpha \frac{\left(1+p\right)y^2}{1+y}.} Using Eq.\
(\ref{eq:selfConIneqInexactEps}) from Lemma \ref{lemma:tildeF} and
definition of $y$ in the preceding, we obtain
\alignShort{\tilde{f}_k\left(t\right) - \tilde{f}_k\left(0\right)
\leq -\alpha\left(1+p\right) \frac{y^2}{y+1}.} Observe that the
function $h\left(y\right) = \frac{y^2}{y+1}$ is monotonically
increasing in $y$, and for $\theta^k\geq  V$ by relation
(\ref{ineq:yBound}) we have $y\geq \frac{2Vb-V+b-1}{b}$. Therefore
\alignShort{-\alpha\left(1+p\right)
\frac{y^2}{y+1}\leq-\alpha\left(1+p\right)
\left(\frac{2Vb-V+b-1}{b}\right)^2/\left(1+\frac{2Vb-V+b-1}{b}\right).}
Combining the preceding two relations completes the proof.
\end{proof}

Note that our algorithm uses the stepsize $d^k =
\frac{d}{\theta^k+1}$ in the damped convergent phase, which is an
approximation to the stepsize $t = 1/(\tilde{\lambda}(x^k)+1)$ used
in the previous theorem. The error between the two is bounded by
relation (\ref{eq:stepsizeBound}) as shown in Lemma
\ref{lemma:stepSize}. We next show that with this error in the
stepsize computation, the improvement in the objective function
value in the inexact algorithm is still lower bounded at each
iteration.

Let $\beta = \frac{d^k}{t}$, where $t = 1/(\tilde{\lambda}(x^k)+1)$.
By the convexity of $f$, we have
\begin{align*}
    f(x^k+\beta t\Delta x^k) = f(\beta(x^k+t\Delta x^k)+(1-\beta)(x^k))
                        \leq \beta f(x^k+t\Delta x^k) + (1-\beta)
                        f(x^k).
\end{align*}
Therefore the objective function value improvement is bounded by
\begin{align*}
    f(x+\beta t\Delta x^k)-f(x^k) &\leq \beta f(x^k+t\Delta x^k) + (1-\beta) f(x^k)- f(x^k) \\
                            & = \beta(f(x^k+t\Delta x^k)-f(x^k))\\
                            & = \beta(\tilde{f}_k(t) - \tilde{f}_k(0)),
\end{align*}
where the last equality follows from the definition of
$\tilde{f}_k(t)$. Using Lemma \ref{lemma:stepSize}, we obtain bounds
on $\beta$ as $2b-1\leq \beta\leq 1$. Hence combining this bound
with Theorem \ref{thm:Damped}, we obtain
\be f(x^{k+1}) - f(x^k)
\leq -(2b-1)\alpha\left(1+p\right)
\frac{\left(\frac{2Vb-V+b-1}{b}\right)^2}{\left(1+\frac{2Vb-V+b-1}{b}\right)}.
\ee
Hence in the damped convergent phase we can guarantee a lower
bound on the object function value improvement at each iteration.
This bound is monotone in $b$, i.e.,  the closer the scalar $b$ is
to $1$, the faster the objective function value improves, however
this also requires the error in the inexact Newton decrement
calculation, i.e., $\tilde\lambda(x^k)-\theta^k$, to diminish to $0$
[cf.\ Assumption \ref{ass:stepsize}].

\subsubsection{Quadratically Convergent Phase}\label{sec:quad}
In this phase, there exists $\bar k$ with $\theta^{\bar{k}}<V$ and
the step size choice is $d^k=1$ for all $k\ge \bar k$.\footnote{Note that once the condition $\theta^{\bar k}<V$ is satisfied,  in all the following iterations, we have stepsize $d^k=1$ and no longer need to compute $\theta^k$. }
We show that the suboptimality in the primal objective function value diminishes
quadratically to a neighborhood of optimal solution. We proceed by
first establishing the following lemma for relating the exact and
the inexact Newton decrements.

\begin{lemma}
Let $\{x^k\}$ be the primal sequence generated by the inexact Newton
method (\ref{eq:inexaAlgo}) and $\lambda(x^k)$, $\tilde
\lambda(x^k)$ be the exact and inexact Newton decrements at $x^k$
[cf.\ Eqs.\ (\ref{eq:lambdaExactAl}) and (\ref{eq:lambdaInexact})].
Let $p$ and $\epsilon$ be the nonnegative scalars defined in
Assumption \ref{ass:errorBoundEps}. We have
\begin{equation}\label{ineq:lemmaLambdaEps}
(1-p)\tilde{\lambda}(x^k)-\sqrt{\epsilon}\leq \lambda(x^k) \leq
(1+p)\tilde{\lambda}(x^k)+\sqrt{\epsilon}.
\end{equation}
\end{lemma}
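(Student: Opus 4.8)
The plan is to treat $\|z\|_{H_k} := \sqrt{z'\nabla^2 f(x^k) z}$ as a genuine norm on $\mathbb{R}^{S+L}$ — this is legitimate because $\nabla^2 f(x^k)=H_k$ is positive definite by Assumption \ref{asmp:utility} (see Eq.\ (\ref{eq:Hdiag}) and the discussion following it) — and then apply the triangle inequality, together with the error bound in Assumption \ref{ass:errorBoundEps}. Note first that by definition $\lambda(x^k)=\|\Delta x^k\|_{H_k}$ and $\tilde\lambda(x^k)=\|\Delta\tilde x^k\|_{H_k}$, and by Eq.\ (\ref{eq:defGamma}) we have $\Delta x^k=\Delta\tilde x^k+\gamma^k$.

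The key intermediate estimate is a bound on $\|\gamma^k\|_{H_k}$. From relation (\ref{ineq:errorEps}) of Assumption \ref{ass:errorBoundEps} and the definition (\ref{eq:lambdaInexact}) of $\tilde\lambda(x^k)$, we have $(\gamma^k)'H_k\gamma^k\le p^2\tilde\lambda(x^k)^2+\epsilon$, hence
\[
\|\gamma^k\|_{H_k}\le\sqrt{p^2\tilde\lambda(x^k)^2+\epsilon}\le p\,\tilde\lambda(x^k)+\sqrt{\epsilon},
\]
where the last step uses subadditivity of the square root, $\sqrt{a+b}\le\sqrt a+\sqrt b$ for $a,b\ge0$, together with $p,\tilde\lambda(x^k)\ge0$.

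With this in hand the two inequalities follow from the triangle inequality for $\|\cdot\|_{H_k}$. For the upper bound,
\[
\lambda(x^k)=\|\Delta\tilde x^k+\gamma^k\|_{H_k}\le\|\Delta\tilde x^k\|_{H_k}+\|\gamma^k\|_{H_k}\le\tilde\lambda(x^k)+p\,\tilde\lambda(x^k)+\sqrt\epsilon=(1+p)\tilde\lambda(x^k)+\sqrt\epsilon.
\]
For the lower bound, write $\Delta\tilde x^k=\Delta x^k-\gamma^k$ and apply the triangle inequality again,
\[
\tilde\lambda(x^k)=\|\Delta x^k-\gamma^k\|_{H_k}\le\|\Delta x^k\|_{H_k}+\|\gamma^k\|_{H_k}\le\lambda(x^k)+p\,\tilde\lambda(x^k)+\sqrt\epsilon,
\]
and rearranging gives $(1-p)\tilde\lambda(x^k)-\sqrt\epsilon\le\lambda(x^k)$, which is the desired relation (\ref{ineq:lemmaLambdaEps}). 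There is no real obstacle here: the only points requiring care are recognizing that positive definiteness of $H_k$ makes $\|\cdot\|_{H_k}$ a norm (so the triangle inequality applies) and using $\sqrt{a+b}\le\sqrt a+\sqrt b$ rather than expanding $(\gamma^k)'H_k\gamma^k$ into cross terms, which would not yield the clean $(1\pm p)$ factors.
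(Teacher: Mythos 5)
Your proof is correct and takes a slightly cleaner route than the paper's. The paper also decomposes $\Delta x^k=\Delta\tilde x^k+\gamma^k$, but it proves the two inequalities separately at the level of quadratic forms: for the lower bound it applies the generalized Cauchy-Schwarz inequality to $|(\Delta x^k)'H_k\Delta\tilde x^k|$ and combines it with a lower bound coming from Lemma~\ref{lemma:gammaLambda}, then cancels a common factor of $\tilde\lambda(x^k)$; for the upper bound it expands $\lambda(x^k)^2=(\Delta\tilde x^k+\gamma^k)'H_k(\Delta\tilde x^k+\gamma^k)$ into three terms, bounds each using Assumption~\ref{ass:errorBoundEps} and Lemma~\ref{lemma:gammaLambda}, and then recognizes the result as a perfect square $((1+p)\tilde\lambda(x^k)+\sqrt\epsilon)^2$. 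You instead package the whole thing as a single bound $\|\gamma^k\|_{H_k}\le p\tilde\lambda(x^k)+\sqrt\epsilon$ (via subadditivity of the square root) followed by two applications of the triangle inequality for the $H_k$-weighted norm. The two arguments are mathematically equivalent — the triangle inequality in an inner-product space is itself a consequence of Cauchy-Schwarz — but your formulation avoids expanding the cross terms and makes the $(1\pm p)$ structure transparent, so it is a genuine simplification of the same underlying idea.
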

\begin{proof}
By Assumption \ref{asmp:utility}, for all $k$, $\nabla^2 f(x^k)$ is positive definite. We therefore can apply the generalized Cauchy-Schwarz inequality and obtain
\begin{align}\label{ineq:firstabs}
 |(\Delta x^k) '\nabla^2 f(x^k) \Delta \tilde{x}^k|
 &\leq \sqrt{((\Delta {x^k}) '\nabla^2 f(x^k) \Delta x^k)((\Delta \tilde{x}^k) '\nabla^2 f(x^k) \Delta \tilde{x}^k)}\\ \nonumber
& = \lambda(x^k)\tilde{\lambda}(x^k),\nonumber
\end{align}
where the equality follows from definition of $\lambda(x^k)$ and
$\tilde{\lambda}(x^k)$. Note that by Assumption
\ref{ass:errorBoundEps}, we have $\Delta x^k = \Delta \tilde{x}^k
+\gamma^k$, and hence
\begin{align}\label{ineq:secondabs}
 |(\Delta x^k) '\nabla^2 f(x^k) \Delta \tilde{x}^k|&= |(\Delta \tilde{x}^k +\gamma^k)'\nabla^2 f(x^k) \Delta \tilde{x}^k|\\ \nonumber
 & \geq (\Delta \tilde{x}^k) '\nabla^2 f(x^k) \Delta \tilde{x}^k -|(\gamma^k)'\nabla^2 f(x^k) \Delta \tilde{x}^k|\\ \nonumber
&\geq \tilde{\lambda}(x^k)^2 - p\tilde{\lambda}(x^k)^2 -  \tilde{\lambda}(x^k)\sqrt{\epsilon},\nonumber
\end{align}
where the first inequality follows from a variation of triangle inequality, and  the last inequality follows from Lemma \ref{lemma:tildeF}.
Combining the two inequalities (\ref{ineq:firstabs}) and (\ref{ineq:secondabs}), we obtain
\be
 \lambda(x^k)\tilde{\lambda}(x^k) \geq \tilde{\lambda}(x^k)^2 - p\tilde{\lambda}(x^k)^2 - \sqrt{\epsilon} \tilde{\lambda}(x^k),\nonumber
\ee By canceling the nonnegative term $\tilde{\lambda}(x^k)$ on both
sides, we have \be \lambda(x^k)\geq \tilde{\lambda}(x^k) -
p\tilde{\lambda}(x^k) - \sqrt{\epsilon}. \nonumber\ee This shows the
first half of the relation (\ref{ineq:lemmaLambdaEps}). For the
second half, using the definition of $\lambda(x^k)$, we have
\begin{align*}
\lambda(x^k)^2 &= (\Delta x^k) '\nabla^2 f(x^k) \Delta x^k \\ \nonumber
& = (\Delta \tilde{x}^k + \gamma^k)'\nabla^2 f(x^k) (\Delta \tilde{x}^k + \gamma^k)\\ \nonumber
&  = (\Delta \tilde{x}^k)'\nabla^2 f(x^k) \Delta \tilde{x}^k + (\gamma^k)'\nabla^2 f(x^k)\gamma^k + 2(\Delta \tilde{x}^k)'\nabla^2 f(x^k)\gamma^k,\nonumber
\end{align*}
where the second equality follows from the definition of $\gamma^k$ [cf.\ Eq.\ (\ref{eq:defGamma})].
By using the definition of $\tilde{\lambda}(x^k)$, Assumption \ref{ass:errorBoundEps} and Lemma \ref{lemma:gammaLambda}, the preceding relation implies,
\begin{align*}
\lambda(x^k)^2 & \leq \tilde{\lambda}(x^k)^2 + p^2  \tilde{\lambda}(x^k)^2 +\epsilon + 2p \tilde{\lambda}(x^k)^2+2\sqrt{\epsilon}\tilde\lambda(x^k)\\
& \leq \tilde{\lambda}(x^k)^2 + p^2  \tilde{\lambda}(x^k)^2 + 2p \tilde{\lambda}(x^k)^2+2\sqrt{\epsilon}(1+p)\tilde\lambda(x^k)  +\epsilon\\
& =  ((1+p)  \tilde{\lambda}(x^k)+\sqrt{\epsilon})^2,
\end{align*}
where the second inequality follows by adding a nonnegative term of $2\sqrt{\epsilon}p\tilde\lambda(x^k) $ to the right hand side. By nonnegativity of $p$, $\epsilon$, $\lambda$ and  $\tilde \lambda (x^k)$, we can take the square root of both sides and this completes the proof for relation (\ref{ineq:lemmaLambdaEps}).
\end{proof}

Before proceeding to establish quadratic convergence in terms of the primal iterations to an error
neighborhood of the optimal solution, we need to impose the
following bound on the errors in our algorithm in this phase. Recall
that $\bar k$ is an index such that $\theta^{\bar k}<V$ and $d^k=1$ for all
$k\ge \bar k$.

\begin{assumption}\label{ass:01}
Let $\{x^k\}$ be the primal sequence generated by the inexact Newton
method (\ref{eq:inexaAlgo}). Let $\phi$ be a positive scalar with
$\phi\leq 0.267$. Let $\xi$ and $v$ be nonnegative scalars defined
in terms of $\phi$ as
\[\xi = \frac{\phi
p+\sqrt\epsilon}{1-p-\phi-\sqrt\epsilon}+\frac{2\phi\sqrt\epsilon+\epsilon}
{(1-p-\phi-\sqrt\epsilon)^2},\quad  \quad
v=\frac{1}{(1-p-\phi-\sqrt\epsilon)^2},\] where $p$ and $\epsilon$
are the scalars defined in Assumption \ref{ass:errorBoundEps}. The
following relations hold
\be\label{ineq:lambdaBound}(1+p)(\theta^{\bar k}+\tau^{\bar
k})+\sqrt\epsilon\leq \phi,\ee \be\label{ineq:next68} v(0.68)^2+\xi
\leq 0.68,\ee \be\label{ineq:lambdatilde}
\frac{0.68+\sqrt{\epsilon}}{1-p}\leq 1,\ee
\be\label{ineq:pepBound}p+\sqrt\epsilon\leq
1-(4\phi^2)^{\frac{1}{4}}-\phi,\ee where $\tau^{\bar k}>0$ is a
bound on the error in the Newton decrement calculation at step $\bar
k$ [cf.\ Assumption \ref{ass:stepsize}].
\end{assumption}

The upper bound of $0.267$ on $\phi$ is necessary here to guarantee
relation (\ref{ineq:pepBound}) can be satisfied by some nonnegative
scalars $p$ and $\epsilon$.  Relation (\ref{ineq:lambdaBound}) can
be satisfied by some nonnegative scalars $p$, $\epsilon$ and
$\tau^{\bar k}$, because we have $\theta^{\bar k}< V <0.267$.
Relation (\ref{ineq:lambdaBound}) and (\ref{ineq:next68}) will be
used to guarantee the condition $\lambda (x^k)\leq 0.68$ is
satisfied throughout this phase, so that we can use Lemma
\ref{lemma:quadConverg} to relate the suboptimality bound with the
Newton decrement, and relation (\ref{ineq:lambdatilde}) and
(\ref{ineq:pepBound}) will be used for establishing the quadratic
rate of convergence of the objective function value, as we will show
in the Theorem \ref{thm:quad}. This assumption can be satisfied by first
choosing proper values for the scalars $p$, $\epsilon$ and $\tau$
such that all the relations are satisfied, and then adapt both the
consensus algorithm for $\theta^{\bar k}$ and the dual iterations
for $w^k$ according to the desired precision (see the discussions
following Assumption \ref{ass:errorBoundEps} and \ref{ass:stepsize}
for how these precision levels can be achieved).

To show the quadratic rate of convergence for the primal iterations, we need the following lemma, which relates the exact Newton decrement at the current and the next step.

\begin{lemma}\label{lemma:quadLambda}
Let $\{x^k\}$ be the primal sequence generated by the inexact Newton
method (\ref{eq:inexaAlgo}) and $\lambda(x^k)$, $\tilde
\lambda(x^k)$ be the exact and inexact Newton decrements at $x^k$
[cf.\ Eqs.\ (\ref{eq:lambdaExactAl}) and (\ref{eq:lambdaInexact})].
Let $\theta^k$ be the computed inexact value of $\tilde
\lambda(x^k)$ and let Assumption \ref{ass:01} hold. Then for all $k$
with $\tilde \lambda(x^k)< 1$, we have
\be\label{ineq:inducStep}\lambda(x^{k+1})\leq
v\lambda(x^k)^2+\xi,\ee where $\xi$ and $v$ are the scalars defined
in Assumption \ref{ass:01} and $p$ and $\epsilon$ are defined as in
Assumption \ref{ass:errorBoundEps}.
\end{lemma}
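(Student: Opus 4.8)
Since the lemma concerns the quadratically convergent phase, the stepsize is $d^k=1$, so $x^{k+1}=x^k+\Delta\tilde x^k$. Let $z$ be the exact Newton direction at $x^{k+1}$, so that by definition $\lambda(x^{k+1})^2=z'\nabla^2 f(x^{k+1})z$. The plan is to transport this decrement back to $x^k$ using the two structural lemmas already proved. First I would apply Lemma \ref{lemma:nexStep} with $x=x^k$, $\Delta\tilde x=\Delta\tilde x^k$ and $\gamma=\gamma^k$ (the defining identity $\Delta x^k=\Delta\tilde x^k+\gamma^k$ is Assumption \ref{ass:errorBoundEps}); the hypothesis $\tilde\lambda(x^k)<1$ is precisely what that lemma needs, and it yields
\[\lambda(x^{k+1})^2 \le \frac{\tilde\lambda(x^k)^2}{1-\tilde\lambda(x^k)}\sqrt{z'\nabla^2 f(x^k)z} + |(\gamma^k)'\nabla^2 f(x^k)z|.\]
Next I would convert the two $x^k$-weighted norms of $z$ into $\lambda(x^{k+1})$. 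Applying Lemma \ref{lemmaMatrix} with $x=x^k$ and $y=x^{k+1}$ --- legitimate since $(x^k-x^{k+1})'\nabla^2 f(x^k)(x^k-x^{k+1})=\tilde\lambda(x^k)^2<1$ --- gives $z'\nabla^2 f(x^k)z \le (1-\tilde\lambda(x^k))^{-2}z'\nabla^2 f(x^{k+1})z = (1-\tilde\lambda(x^k))^{-2}\lambda(x^{k+1})^2$, i.e. $\sqrt{z'\nabla^2 f(x^k)z}\le \lambda(x^{k+1})/(1-\tilde\lambda(x^k))$. For the error term I would use the generalized Cauchy--Schwarz inequality and then Assumption \ref{ass:errorBoundEps} together with $\sqrt{p^2\tilde\lambda(x^k)^2+\epsilon}\le p\tilde\lambda(x^k)+\sqrt\epsilon$, obtaining $|(\gamma^k)'\nabla^2 f(x^k)z|\le (p\tilde\lambda(x^k)+\sqrt\epsilon)\,\lambda(x^{k+1})/(1-\tilde\lambda(x^k))$.

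Substituting both bounds into the displayed inequality and dividing through by $\lambda(x^{k+1})$ (the case $\lambda(x^{k+1})=0$ being trivial), gives
\[\lambda(x^{k+1}) \le \frac{\tilde\lambda(x^k)^2}{(1-\tilde\lambda(x^k))^2} + \frac{p\tilde\lambda(x^k)+\sqrt\epsilon}{1-\tilde\lambda(x^k)}.\]
It then remains to replace $\tilde\lambda(x^k)$ by $\lambda(x^k)$. From the inexact/exact decrement inequality (\ref{ineq:lemmaLambdaEps}) we have $\tilde\lambda(x^k)\le(\lambda(x^k)+\sqrt\epsilon)/(1-p)$; since the right-hand side above is increasing in $\tilde\lambda(x^k)$ on $[0,1)$ I may substitute this upper bound, and after observing $1-(\lambda(x^k)+\sqrt\epsilon)/(1-p)=(1-p-\lambda(x^k)-\sqrt\epsilon)/(1-p)$ the factors of $1-p$ cancel, leaving
\[\lambda(x^{k+1}) \le \frac{(\lambda(x^k)+\sqrt\epsilon)^2}{(1-p-\lambda(x^k)-\sqrt\epsilon)^2} + \frac{p\lambda(x^k)+\sqrt\epsilon}{1-p-\lambda(x^k)-\sqrt\epsilon}.\]
Finally I would use the bound $\lambda(x^k)\le\phi$ in force throughout this phase (its base case is relation (\ref{ineq:lambdaBound}), carried forward in the induction of Theorem \ref{thm:quad}) to lower-bound each denominator by $1-p-\phi-\sqrt\epsilon$ and, after expanding $(\lambda(x^k)+\sqrt\epsilon)^2=\lambda(x^k)^2+2\sqrt\epsilon\,\lambda(x^k)+\epsilon$, to bound the cross terms $2\sqrt\epsilon\,\lambda(x^k)\le 2\phi\sqrt\epsilon$ and $p\lambda(x^k)\le p\phi$ in the numerators; this produces exactly $v\lambda(x^k)^2+\xi$ with $v$ and $\xi$ as in Assumption \ref{ass:01}.

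The conceptual work is all in the first two lemmas, which transport the decrement across the change of Hessian; the step I expect to require the most care is the last one. One must check that the substitution $\tilde\lambda(x^k)\mapsto(\lambda(x^k)+\sqrt\epsilon)/(1-p)$ is admissible --- this needs both the monotonicity just noted and the fact that this quantity is strictly below $1$, which is where relation (\ref{ineq:pepBound}) and $\lambda(x^k)\le\phi$ enter --- and then that after clearing the $1-p$ factors one isolates precisely the combination defining $v$ and $\xi$ rather than a weaker bound. This is bookkeeping rather than a genuine obstacle, but it is where an error would most easily creep in.
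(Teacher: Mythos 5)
Your proof is correct and follows essentially the same route as the paper's: Lemma \ref{lemma:nexStep} with $z$ the exact Newton direction at $x^{k+1}$, generalized Cauchy--Schwarz plus Assumption \ref{ass:errorBoundEps} on the error term, Lemma \ref{lemmaMatrix} to convert the $x^k$-weighted norm of $z$ into $\lambda(x^{k+1})/(1-\tilde\lambda(x^k))$, then the substitutions $\tilde\lambda(x^k)\le(\lambda(x^k)+\sqrt\epsilon)/(1-p)$ and $\lambda(x^k)\le\phi$. The only cosmetic difference is that you apply Lemma \ref{lemmaMatrix} separately to the two terms rather than factoring out $\sqrt{z'\nabla^2 f(x^k)z}$ first and applying it once, which yields the identical intermediate inequality.
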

\begin{proof}
Given $\tilde \lambda(x^k)< 1$, we can apply Lemma \ref{lemma:nexStep} by letting $z=\Delta x^{k+1}$, we have
\begin{align*}
    \lambda(x^{k+1})^2 & = (\Delta x^{k+1})' \nabla f^2(x+\Delta \tilde x) \Delta x^{k+1}\\
        &\leq \frac{\tilde\lambda(x^k)^2}{1-\tilde\lambda(x^k)} \sqrt{(\Delta x^{k+1})' \nabla^2 f(x) \Delta x^{k+1}}+\left|(\gamma^k)'\nabla ^2f(x)'\Delta x^{k+1}\right|\\
        &\leq \frac{\tilde\lambda(x^k)^2}{1-\tilde\lambda(x^k)} \sqrt{(\Delta x^{k+1})' \nabla^2 f(x) \Delta x^{k+1}}+\sqrt{(\gamma^k)'\nabla ^2f(x)\gamma^k}\sqrt{(\Delta x^{k+1})'\nabla ^2f(x)\Delta x^{k+1}},
\end{align*}
where the last inequality follows from the generalized
Cauchy-Schwarz inequality. Using Assumption \ref{ass:errorBoundEps},
the above relation implies \alignShort{\lambda(x^{k+1})^2
        &\leq \left(\frac{\tilde\lambda(x^k)^2}{1-\tilde\lambda(x^k)}+\sqrt{p^2 \tilde\lambda(x^k)^2+\epsilon}\right) \sqrt{(\Delta x^{k+1})' \nabla^2 f(x) \Delta x^{k+1}}.}
By the fact that $\tilde \lambda(x^k)\leq\theta^k+\tau\leq\phi<1$, we can apply Lemma \ref{lemmaMatrix} and obtain,
\alignShort {\lambda(x^{k+1})^2&\leq \frac{1}{1-\tilde\lambda(x^k)}\left(\frac{\tilde\lambda(x^k)^2}{1-\tilde\lambda(x^k)}+\sqrt{p^2 \tilde\lambda(x^k)^2+\epsilon}\right)\sqrt{(\Delta x^{k+1})' \nabla^2 f(x+\Delta \tilde x) \Delta x^{k+1}}\\
        &= \left(\frac{\tilde\lambda(x^k)^2}{(1-\tilde\lambda(x^k))^2}+\frac{\sqrt{p^2 \tilde\lambda(x^k)^2+\epsilon}}{1-\tilde\lambda(x^k)}\right)\lambda(x^{k+1}).
} By dividing the last line by $\lambda(x^{k+1})$, this
yields\alignShort{\lambda(x^{k+1})\leq\frac{\tilde\lambda(x^k)^2}{(1-\tilde\lambda(x^k))^2}+
\frac{\sqrt{p^2\tilde\lambda(x^k)^2+\epsilon}}{1-\tilde\lambda(x^k)}\leq\frac
{\tilde\lambda(x^k)^2}{(1-\tilde\lambda(x^k))^2}+\frac{p
\tilde\lambda(x^k)+\sqrt\epsilon}{1-\tilde\lambda(x^k)}.} From Eq.\
(\ref{ineq:lemmaLambdaEps}), we have $\tilde \lambda (x^k)\leq
\frac{\lambda(x^k)+\sqrt\epsilon}{1-p}$. Therefore the above
relation implies
\alignShort{\lambda(x^{k+1})\leq\left(\frac{\lambda(x^k)+\sqrt\epsilon}{1-p-\lambda(x^k)-\sqrt\epsilon}
\right)^2
+\frac{p\lambda(x^k)+\sqrt\epsilon}{1-p-\lambda(x^k)-\sqrt\epsilon}.}
By Eq.\ (\ref{ineq:267}), we have $\lambda(x^k)\leq \phi$,  and
therefore the above relation can be relaxed to
\alignShort{\lambda(x^{k+1})\leq\left(\frac{\lambda(x^k)}{1-p-\phi-\sqrt\epsilon}\right)^2
+\frac{\phi
p+\sqrt\epsilon}{1-p-\phi-\sqrt\epsilon}+\frac{2\phi\sqrt\epsilon+\epsilon}
{(1-p-\phi-\sqrt\epsilon)^2}.} Hence, by definition of $\xi$ and
$v$, we have \alignShort {\lambda(x^{k+1})\leq v\lambda(x^k)^2+\xi.}
\end{proof}

In the next theorem, building upon the preceding lemma, we apply relation (\ref{eq:selfConOptimal}) to
bound the suboptimality in our algorithm, i.e.,  $f(x^k)-f^*$, using
the exact Newton decrement. We show that under the above assumption,
the objective function value $f(x^k)$ generated by our algorithm
converges quadratically in terms of the primal iterations to an explicitly characterized error
neighborhood of the optimal value $f^*$.
\begin{theorem}\label{thm:quad}
Let $\{x^k\}$ be the primal sequence generated by the inexact Newton
method (\ref{eq:inexaAlgo}) and $\lambda(x^k)$, $\tilde
\lambda(x^k)$ be the exact and inexact Newton decrements at $x^k$
[cf.\ Eqs.\ (\ref{eq:lambdaExactAl}) and (\ref{eq:lambdaInexact})].
Let $f(x^k)$ be the corresponding objective function value at
$k^{th}$ iteration and $f^*$ denote the optimal objective function
value for problem (\ref{eqFormulation}). Let Assumption \ref{ass:01}
hold, and $\xi$ and $v$ be the scalars defined in Assumption
\ref{ass:01}. Assume that for some $\delta \in [0,1/2)$,
\alignShort{\xi+v\xi\leq \frac{\delta}{4v}.} Then for all $m\ge 1$,
we have \be\label{ineq:quadConv}\lambda(x^{\bar k+m})\leq
\frac{1}{2^{2^m}v}+\xi+\frac{\delta}{v}\frac{2^{2^m-1}-1}{2^{2^m}},\ee
and \alignShort{\mbox{limsup}_{m\to\infty}f(x^{\bar k+m})-f^*\leq
\xi + \frac{\delta}{2v},} where $\bar k$ is the iteration index with $\theta^{\bar k}<V$.
\end{theorem}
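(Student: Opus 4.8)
The plan is to iterate the one-step contraction of Lemma~\ref{lemma:quadLambda}, namely $\lambda(x^{k+1})\le v\lambda(x^k)^2+\xi$ (valid whenever $\tilde\lambda(x^k)<1$), starting from iterate $\bar k$, feed its output into the suboptimality estimate $f(x^k)-f^*\le\lambda(x^k)^2$ of Lemma~\ref{lemma:quadConverg}, and then unwind the resulting scalar recursion. \textbf{Step 1: a uniform bound $\lambda(x^k)<0.68$ for all $k\ge\bar k$}, proved by induction on $k$. For $k=\bar k$, since $\tilde\lambda(x^{\bar k})\le\theta^{\bar k}+\tau^{\bar k}$, relation (\ref{ineq:lambdaBound}) and the upper half of (\ref{ineq:lemmaLambdaEps}) give $\lambda(x^{\bar k})\le(1+p)(\theta^{\bar k}+\tau^{\bar k})+\sqrt\epsilon\le\phi\le0.267$; in particular $\tilde\lambda(x^{\bar k})\le\phi<1$. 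For the step, if $\lambda(x^k)<0.68$ then the lower half of (\ref{ineq:lemmaLambdaEps}) with (\ref{ineq:lambdatilde}) forces $\tilde\lambda(x^k)\le\frac{\lambda(x^k)+\sqrt\epsilon}{1-p}<\frac{0.68+\sqrt\epsilon}{1-p}\le1$, so Lemma~\ref{lemma:quadLambda} applies and $\lambda(x^{k+1})\le v\lambda(x^k)^2+\xi<v(0.68)^2+\xi\le0.68$ by (\ref{ineq:next68}). This simultaneously shows the contraction holds for every $k\ge\bar k$ and, via Lemma~\ref{lemma:quadConverg}, that $f(x^k)-f^*\le\lambda(x^k)^2$ for every $k\ge\bar k$.

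\textbf{Step 2: the closed form (\ref{ineq:quadConv}), by induction on $m$.} Here I would rescale, setting $\mu_m=v\lambda(x^{\bar k+m})$ and $g=v\xi$: the contraction becomes $\mu_{m+1}\le\mu_m^2+g$, the target bound becomes $\mu_m\le g+\frac{\delta}{2}+\frac{1-\delta}{2^{2^m}}$, and the hypothesis $\xi+v\xi\le\frac{\delta}{4v}$ becomes $g(1+v)\le\frac{\delta}{4}$, hence $g\le\frac{\delta}{8}$ since $v\ge1$. For $m=1$: relation (\ref{ineq:pepBound}) gives $1-p-\phi-\sqrt\epsilon\ge(4\phi^2)^{1/4}$, hence $v\phi\le\frac{1}{2}$, so $\mu_1\le v(v\phi^2+\xi)=(v\phi)^2+g\le\frac{1}{4}+g\le g+\frac{1+\delta}{4}$, which is the $m=1$ instance. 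For the step $m\to m+1$, write $A=g+\frac{\delta}{2}\le\frac{5\delta}{8}$ and $B=\frac{1-\delta}{2^{2^m}}\le\frac{1-\delta}{4}$; expand $(A+B)^2=A^2+2AB+B^2$, bound $B^2\le\frac{1-\delta}{2^{2^{m+1}}}$ (using $(1-\delta)^2\le1-\delta$ and $2^{2^{m+1}}=(2^{2^m})^2$), and check the elementary inequality $A^2+2AB\le\frac{\delta}{2}$ (it reduces to $5\delta^2\le12\delta$, true for $\delta<\frac{1}{2}$); then $\mu_{m+1}\le\mu_m^2+g\le(A+B)^2+g\le g+\frac{\delta}{2}+\frac{1-\delta}{2^{2^{m+1}}}$, completing the induction.

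\textbf{Step 3: the limit.} The right-hand side of (\ref{ineq:quadConv}) equals $\xi+\frac{\delta}{2v}+\frac{1-\delta}{2^{2^m}v}$, which decreases to $\xi+\frac{\delta}{2v}$; hence $\limsup_m\lambda(x^{\bar k+m})\le\xi+\frac{\delta}{2v}$, and combining with $f(x^{\bar k+m})-f^*\le\lambda(x^{\bar k+m})^2$ from Step 1 yields $\limsup_m\left(f(x^{\bar k+m})-f^*\right)\le\left(\xi+\frac{\delta}{2v}\right)^2\le\xi+\frac{\delta}{2v}$, the last step using $\xi+\frac{\delta}{2v}\le1$ (from $\xi<0.68$ via (\ref{ineq:next68}) and $\frac{\delta}{2v}<\frac{1}{4}$). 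I expect the main obstacle to be the bookkeeping in Step 2: one must verify that the interlocking constants of Assumption~\ref{ass:01} are tuned tightly enough that the quadratic term $\mu_m^2$ plus the fixed perturbation $g$ still fits under the next bound (the inductive step is genuinely tight at $m=1$), and the rescaling $\mu_m=v\lambda(x^{\bar k+m})$ is what turns the awkward recursion $\lambda(x^{k+1})\le v\lambda(x^k)^2+\xi$ into the clean $\mu_{m+1}\le\mu_m^2+g$.
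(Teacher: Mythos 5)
Your proof is correct and follows essentially the same route as the paper: iterate the one-step recursion from Lemma~\ref{lemma:quadLambda}, use (\ref{ineq:lemmaLambdaEps}), (\ref{ineq:lambdatilde}), (\ref{ineq:next68}) to keep $\lambda(x^{\bar k+m})\leq 0.68$ and $\tilde\lambda(x^{\bar k+m})<1$ at every step so the lemma stays applicable, close the induction with the hypothesis $\xi+v\xi\leq\delta/(4v)$, and finish with Lemma~\ref{lemma:quadConverg}. The only cosmetic differences are that you split the paper's single simultaneous induction into a preliminary uniform-bound pass followed by the quantitative pass, and you rescale to $\mu_m=v\lambda(x^{\bar k+m})$, $g=v\xi$, which makes the algebraic verification that the paper compresses into ``using algebraic manipulations'' fully explicit.
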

\begin{proof}
We prove Eq.\ (\ref{ineq:quadConv}) by induction. First for $m=1$,
from Assumption \ref{ass:stepsize}, we have $\tilde\lambda (x^{\bar
k})\leq \theta^{\bar k}+ \tau^{\bar k}$. Relation
(\ref{ineq:lambdaBound}) implies $\theta^{\bar k}+ \tau^{\bar k}\leq
\phi<1$, hence we have $\tilde \lambda(x^{\bar k})<1$ and we can
apply Lemma \ref{lemma:quadLambda} and obtain
\[\lambda(x^{\bar k+1})\leq v\lambda(x^{\bar k})^2+\xi.\]
By Assumption \ref{ass:01} and Eq.\ (\ref{ineq:lemmaLambdaEps}), we
have
\begin{align}\label{ineq:267}
\lambda(x^{\bar k}) \leq (1+p)(\theta^{\bar k}+ \tau^{\bar k})+\sqrt\epsilon\leq \phi.
\end{align}

The above two relations imply
\[\lambda(x^{\bar k+1}) \leq v\phi^2+\xi.\]
The right hand side is monotonically increasing in $\phi$. Since
$\phi\leq 0.68$, we have by Eq.\ (\ref{ineq:next68}),
$\lambda(x^{\bar k+1})\leq 0.68$. By relation (\ref{ineq:pepBound}),
we obtain $(1-p-\phi-\sqrt\epsilon)^4\geq 4\phi^2$. Using the
definition of $v$, i.e., $v=\frac{1}{(1-p-\phi-\sqrt\epsilon)^2}$,
the above relation implies $v\phi^2\leq \frac{1}{4v}$. Hence we have
\[\lambda(x^{\bar k+1})\leq \frac{1}{4v}+\xi.\]
This establishes relation (\ref{ineq:quadConv}) for $m=1$.

We next assume that Eq.\ (\ref{ineq:quadConv}) holds and
$\lambda(x^{\bar k+m})\leq 0.68$ for some $m>0$, and show that these
also hold for $m+1$. From Eqs.\ (\ref{ineq:lemmaLambdaEps}) and
(\ref{ineq:lambdatilde}), we have
\[\tilde \lambda(x^{\bar k+m})\leq \frac{\lambda(x^{\bar k+m})+\sqrt{\epsilon}}{1-p}\leq \frac{0.68+\sqrt{\epsilon}}{1-p}\leq 1,\footnote{Note that we do not need monotonicity in $\tilde \lambda (x^k)$, instead the error level assumption from relation (\ref{ineq:lambdatilde}) enables us to use Lemma \ref{lemma:quadLambda} to establish quadratic rate of convergence.}\]
where in the second inequality we used the inductive hypothesis that
$\lambda(x^{\bar k+m})\leq 0.68$. Hence we can apply Eq.\
(\ref{ineq:inducStep}) and obtain
\[\lambda(x^{{\bar k}+m+1})\leq v\lambda(x^{{\bar k}+m})^2+\xi,\]
using Eq.\ (\ref{ineq:next68}) and $\lambda(x^{\bar { k}+m})\leq
0.68$ once more, we have $\lambda(x^{{\bar k}+m+1})\leq 0.68$. From
our inductive hypothesis that (\ref{ineq:quadConv}) holds for $m$,
the above relation also implies
\begin{align*}
\lambda(x^{{\bar k}+m+1}) &\leq v\left(\frac{1}{2^{2^m}v}+\xi+\frac{\delta}{v}\frac{2^{2^m-1}-1}{2^{2^m}}\right)^2+\xi\\
&= \frac{1}{2^{2^{m+1}}v}+\frac{\xi}{2^{2^m-1}}+\frac{\delta}{v}\frac{2^{2^m-1}-1}{2^{2^{m+1}-1}}+v\left(\xi+\frac{\delta}{v}\frac{2^{2^m-1}-1}{2^{2^m}}\right)^2+\xi,
\end{align*}

Using algebraic manipulations and the assumption that $\xi+v\xi\leq \frac{\delta}{4v}$, this yields
\be\lambda(x^{{\bar k}+m+1})\leq \frac{1}{2^{2^{m+1}}v}+\xi+\frac{\delta}{v}\frac{2^{2^{m+1}-1}-1}{2^{2^{m+1}}},\nonumber\ee
completing the induction and therefore the proof of relation (\ref{ineq:quadConv}).

The induction proof above suggests that the condition $\lambda
(x^{\bar k+m})\leq 0.68$ holds for all $m>0$, we can therefore apply
Lemma \ref{lemma:quadConverg}, and obtain an upper bound on
suboptimality as follows, \be f(x^{{\bar k}+m})-f^* \leq
\left(\lambda(x^{{\bar k}+m})\right)^2\leq\lambda(x^{{\bar
k}+m}).\nonumber\ee Combining this with Eq.\ (\ref{ineq:quadConv}),
we obtain \alignShort{f(x^{{\bar
k}+m})-f^*\leq\frac{1}{2^{2^m}v}+\xi+\frac{\delta}{v}\frac{2^{2^m-1}-1}{2^{2^m}}.}
Taking limit superior on both sides of the preceding relation
establishes the final result.
\end{proof}
The above theorem shows that the objective function value $f(x^k)$
generated by our algorithm converges in terms of the primal iterations quadratically to a neighborhood
of the optimal value $f^*$, with the neighborhood of size
$\xi+\frac{\delta}{2v}$, where \[\xi = \frac{\phi
p+\sqrt\epsilon}{1-p-\phi-\sqrt\epsilon}+\frac{2\phi\sqrt\epsilon+\epsilon}
{(1-p-\phi-\sqrt\epsilon)^2},\quad  \quad
v=\frac{1}{(1-p-\phi-\sqrt\epsilon)^2},\] and the condition
$\xi+v\xi\leq \frac{\delta}{4v}$ is satisfied. Note that with the
exact Newton algorithm, we have $p=\epsilon=0$, which implies
$\xi=0$ and we can choose $\delta =0$, which in turn leads to the
size of the error neighborhood being $0$. This confirms the fact
that the exact Newton algorithm converges quadratically to the
optimal objective function value.

\subsection{Convergence with respect to Design Parameter $\mu$}\label{subsec:mu}
In the preceding development, we have restricted our attention to
develop an algorithm for a given logarithmic barrier coefficient
$\mu$. We next study the convergence property of the optimal object
function value as a function of $\mu$, in order to develop a method
to bound the error introduced by the logarithmic barrier functions
to be arbitrarily small. We utilize the following result from
\cite{InteriorBook}.

\begin{lemma}\label{lemma:dfSC}
Let $G$ be a closed convex domain, and function $g$ be a self-concordant barrier function for $G$, then for any $x$, $y$ in interior of $G$, we have $(y-x)'\nabla g(x)\leq 1$.
\end{lemma}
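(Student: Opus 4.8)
The plan is to reduce the claim to an elementary one–variable estimate along the line segment from $x$ to $y$. Fix $x$ and $y$ in the interior of $G$ and set $\phi(t) = g(x+t(y-x))$. Since $\mathrm{int}(G)$ is convex and contains both $x$ and $y$, the segment $\{x+t(y-x):t\in[0,1]\}$ lies in $\mathrm{int}(G)$; moreover, because $y\in\mathrm{int}(G)$ the point $y+(t-1)(y-x)$ stays in $\mathrm{int}(G)$ for $t$ slightly larger than $1$, so $\phi$ is defined on an interval $[0,\bar t)$ with $\bar t>1$. By the chain rule $\phi'(t)=(y-x)'\nabla g(x+t(y-x))$ and $\phi''(t)=(y-x)'\nabla^2 g(x+t(y-x))(y-x)\ge 0$, the last inequality because $g$ is convex. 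In particular $\phi'(0)=(y-x)'\nabla g(x)$ is exactly the quantity to be bounded by $1$, and $\phi'$ is nondecreasing.

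The one fact about barriers that is needed is the defining parameter inequality of a self-concordant barrier, normalized here to barrier parameter $1$ in accordance with the $a=1$ convention of this paper (see \cite{InteriorBook}): for every $z\in\mathrm{int}(G)$ and every direction $h$,
\[
(h'\nabla g(z))^2 \le h'\nabla^2 g(z)\,h .
\]
(This is the Cauchy--Schwarz form of the condition $\nabla g(z)'\nabla^2 g(z)^{-1}\nabla g(z)\le 1$, and it remains valid, read as $0\le 0$, in directions where $\nabla^2 g(z)h=0$.) Taking $z=x+t(y-x)$ and $h=y-x$ gives $\phi'(t)^2\le \phi''(t)$ for all $t\in[0,\bar t)$.

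With this in hand there are two cases. If $\phi'(0)\le 0$ then $\phi'(0)\le 0<1$ and we are done. If $\phi'(0)>0$, then since $\phi'$ is nondecreasing we have $\phi'(t)\ge\phi'(0)>0$ on $[0,\bar t)$, so $1/\phi'(t)$ is well defined and, using $\phi''(t)\ge\phi'(t)^2$,
\[
\frac{d}{dt}\!\left(\frac{1}{\phi'(t)}\right)=-\frac{\phi''(t)}{\phi'(t)^2}\le -1 .
\]
Integrating from $0$ to $t$ yields $\tfrac{1}{\phi'(t)}-\tfrac{1}{\phi'(0)}\le -t$, hence $\tfrac{1}{\phi'(0)}\ge t+\tfrac{1}{\phi'(t)}>t$. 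Since $\bar t>1$, evaluating at $t=1$ gives $\phi'(0)<1$, i.e.\ $(y-x)'\nabla g(x)<1$, which proves the lemma (and in fact with a strict inequality).

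The only subtle points I anticipate are bookkeeping: recording precisely the parameter-$1$ barrier inequality above --- which is part of the definition of a self-concordant barrier used in this setting --- and checking that the domain of $\phi$ strictly contains $[0,1]$, so that the bound $\phi'(0)<1/t$ may legitimately be evaluated at $t=1$. Both are immediate from $x$ and $y$ lying in the \emph{interior} of the convex set $G$, and the remainder is the elementary integration displayed above.
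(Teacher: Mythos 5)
The paper does not prove Lemma \ref{lemma:dfSC}; it is stated as a fact imported from \cite{InteriorBook}, so there is no in-house argument to compare against. Your proof is correct and is the standard derivation of the bound $(y-x)'\nabla g(x)<\nu$ for a $\nu$-self-concordant barrier, here specialized to $\nu=1$: pass to the one-variable function $\phi(t)=g(x+t(y-x))$, observe that the barrier-parameter inequality gives $\phi''(t)\ge\phi'(t)^2$ along the segment, and integrate the resulting Riccati inequality for $1/\phi'$. The two bookkeeping points you flag --- that the segment extends past $t=1$ because $y$ is interior, and that the case $\phi'(0)\le 0$ must be handled separately --- are both genuinely necessary and are handled correctly, and your argument in fact yields the strict inequality $(y-x)'\nabla g(x)<1$.

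One attribution is worth correcting. You anchor the parameter-$1$ inequality $(h'\nabla g(z))^2\le h'\nabla^2 g(z)\,h$ on ``the $a=1$ convention of this paper,'' but that convention normalizes the self-concordance constant in $|g'''(x)|\le 2a^{-1/2}g''(x)^{3/2}$ and is independent of the barrier parameter $\nu$ in $\nabla g(z)'\nabla^2 g(z)^{-1}\nabla g(z)\le\nu$; one can rescale $g$ to change $a$ without affecting $\nu$. The reason to take $\nu=1$ is simply that the stated conclusion is $\le 1$: the lemma is implicitly taking the barrier parameter to equal $1$, and that should be recorded as a hypothesis of the lemma rather than inferred from the paper's $a=1$ self-concordance convention.
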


Using this lemma and an argument similar to that in
\cite{InteriorBook}, we can establish the following result, which
bounds the sub-optimality as a function of $\mu$.

\begin{theorem}\label{thm:mu}
Given $\mu\ge 0$, let $x(\mu)$ denote the optimal solution of
problem (\ref{eqFormulation}) and $h(x(\mu)) = \sum_{i=1}^S
-U_i(x_i(\mu))$ . Similarly, let $x^*$ denote the optimal solution
of problem (\ref{ineqFormulation}) together with corresponding slack
variables (defined in Eq.\ (\ref{eq:slack})), and $h^* =
\sum_{i=1}^S -U_i(x_i^*)$. Then, the following relation holds,
\als{h(x(\mu))-h^*\leq \mu.}
\end{theorem}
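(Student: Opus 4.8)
The plan is to combine the first-order optimality condition for the barrier problem (\ref{eqFormulation}) with convexity of $h$ and the self-concordant barrier inequality of Lemma \ref{lemma:dfSC}. First I would use that $x(\mu)$ is the (necessarily interior) minimizer of $f(x)=h(x)+\mu g(x)$ over $\{x:Ax=c\}$, where $g(x)=-\sum_{i=1}^{S+L}\log x_i$ is the logarithmic barrier. The first-order optimality condition --- which is precisely the system (\ref{eq:newtonUpdate}) with zero Newton direction --- gives $\nabla f(x(\mu))=-A'w$ for some vector $w$, so that $\nabla f(x(\mu))'u=0$ for every $u$ in the null space of $A$. Since $x(\mu)$ and $x^*$ (taken together with its slack variables, cf.\ (\ref{eq:slack})) both satisfy $Ax=c$, the vector $x^*-x(\mu)$ lies in the null space of $A$, and hence $\nabla f(x(\mu))'(x^*-x(\mu))=0$.

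Splitting $\nabla f=\nabla h+\mu\nabla g$, this orthogonality becomes $\nabla h(x(\mu))'(x^*-x(\mu))=-\mu\,\nabla g(x(\mu))'(x^*-x(\mu))$. Next I would invoke convexity of $h$: by Assumption \ref{asmp:utility} each $-U_i$ is convex, so $h$ is convex and differentiable at the strictly positive point $x(\mu)$, and the first-order underestimator inequality gives $h^*=h(x^*)\ge h(x(\mu))+\nabla h(x(\mu))'(x^*-x(\mu))$. Combining the last two relations,
\[
h(x(\mu))-h^*\;\le\;-\nabla h(x(\mu))'(x^*-x(\mu))\;=\;\mu\,\nabla g(x(\mu))'(x^*-x(\mu)),
\]
so it only remains to show $\nabla g(x(\mu))'(x^*-x(\mu))\le 1$, which is exactly Lemma \ref{lemma:dfSC} applied to the self-concordant barrier $g$ of the feasible set of (\ref{eqFormulation}) with the roles $x=x(\mu)$ and $y=x^*$.

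The subtlety here --- and the step I expect to be the main obstacle --- is that Lemma \ref{lemma:dfSC} is stated for two points in the \emph{interior} of the domain, whereas $x^*$, being the optimum of the inequality-constrained problem, generically lies on the boundary (an active capacity constraint forces some slack coordinate $x_i^*$ to vanish). To close this gap I would pass to the limit: the feasible set of (\ref{eqFormulation}) has nonempty interior (noted right after the problem is introduced), so fixing a strictly interior feasible vector $\hat x$ --- e.g.\ the initialization (\ref{init}) --- and setting $x^\varepsilon=(1-\varepsilon)x^*+\varepsilon\hat x$ gives, for $\varepsilon\in(0,1]$, a strictly positive vector with $Ax^\varepsilon=c$ and $x^\varepsilon\to x^*$ as $\varepsilon\to0$; applying Lemma \ref{lemma:dfSC} to $x(\mu)$ and $x^\varepsilon$ and letting $\varepsilon\to0$ --- legitimate because $\nabla g(x(\mu))$ is a fixed finite vector --- yields $\nabla g(x(\mu))'(x^*-x(\mu))\le1$, and the displayed bound then gives $h(x(\mu))-h^*\le\mu$. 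The case $\mu=0$ is immediate since the barrier drops out and $h(x(0))=h^*$. Apart from this limiting step the argument is routine; one need only keep in mind that $h$ is constant in the last $L$ (slack) coordinates yet still convex on all of $\mathbb{R}^{S+L}$, so the underestimator inequality for $h$ remains valid there.
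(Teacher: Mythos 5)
Your argument is correct and follows the same overall strategy as the paper: combine first-order optimality of $x(\mu)$ for the barrier problem, convexity of $h$, and Lemma \ref{lemma:dfSC}, with a limiting argument to cope with $x^*$ possibly lying on the boundary of the positive orthant. Two small differences are worth flagging. First, you use the Lagrangian form of optimality ($\nabla f(x(\mu)) \in \mathrm{range}(A')$, giving the exact identity $\nabla f(x(\mu))'(x^*-x(\mu))=0$), whereas the paper invokes the variational inequality $(\nabla h(x(\mu))+\mu\nabla g(x(\mu)))'(x-x(\mu))\geq 0$; these are equivalent here because $x(\mu)$ is strictly interior in the positivity constraints, and your version makes that explicit. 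Second, and more substantively, your limiting argument takes $x^\varepsilon=(1-\varepsilon)x^*+\varepsilon\hat x$ with $\hat x$ a fixed interior feasible point and lets $\varepsilon\to 0$; the paper instead passes to the limit along the central path, $x(\tilde\mu)\to x^*$ as $\tilde\mu\to 0$. Your route is more elementary and self-contained, since it does not rely on the (true but nontrivial) convergence of $x(\tilde\mu)$ to $x^*$ --- it needs only that the affine feasible slice of the positive orthant has a nonempty interior, which the paper establishes earlier. Both limits are justified by the linearity of $z\mapsto \nabla g(x(\mu))'\,(z-x(\mu))$. So the proposal is correct and, in the boundary-limit step, arguably cleaner than the paper's proof.
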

\begin{proof}
For notational simplicity, we write $g(x) = -\sum_{i =1}^{S+L}
\log{({x_i})}$. Therefore the objective function for problem
(\ref{eqFormulation}) can be written as $h(x)+\mu g(x)$. By
Assumption \ref{asmp:utility}, we have that the utility functions
are concave, therefore the negative objective functions in the
minimization problems are convex. From convexity, we obtain
\be\label{ineq:h*}h(x^*) \geq h(x(\mu))+(x^*-x(\mu))'\nabla
h(x(\mu)).\ee By optimality condition for $x(\mu)$ for problem
(\ref{eqFormulation}) for a given $\mu$, we have, \als{(\nabla
h(x(\mu))+\mu\nabla g(x(\mu)))'(x-x(\mu))\geq 0,} for any feasible
$x$. Since $x^*$ is feasible, we have \als{(\nabla h(x(\mu))+\mu\nabla
g(x(\mu)))'(x^*-x(\mu))\geq 0,} which implies \als{\nabla
h(x(\mu))'(x^*-x(\mu))\geq -\mu\nabla g(x(\mu))'(x^*-x(\mu)).} For
any $\mu$, we have $x(\mu)$ belong to the interior of the feasible
set, and by Lemma \ref{lemma:dfSC}, we have for all $\tilde\mu$,
$\nabla g(x(\mu))'(x(\tilde\mu)-x(\mu))\leq 1$. By continuity of
$x(\mu)$ and the fact that the convex set $Ax\leq c$ is closed, for
$A$ and $c$ defined in problem (\ref{eqFormulation}), we have $x^* =
\lim_{\mu \to 0} x(\mu)$, and hence \als{\nabla
g(x(\mu))'(x^*-x(\mu))= \lim_{\tilde\mu \to 0}\nabla
g(x(\mu))'(x(\tilde\mu)-x(\mu))\leq 1.} The preceding two relations
imply \als{\nabla h(x(\mu))'(x^*-x(\mu))\geq -\mu.} In view of
relation (\ref{ineq:h*}), this establishes the desired result, i.e.,
\als{h(x(\mu))-h^*\leq \mu.}
\end{proof}

By using the above theorem, we can develop a method to bound the
sub-optimality between the objective function value our algorithm
provides for problem (\ref{eqFormulation}) and the exact optimal
objective function value for problem (\ref{ineqFormulation}), i.e,
the sub-optimality introduced by the barrier functions in the
objective function, such that  for any positive scalar $a$, the
following relation holds,
\be\label{ineq:subOpReq}\frac{h(x(\mu))-h^*}{h^*}\leq a,\ee where
the value $h(x(\mu))$ is the value obtained from our algorithm for
problem (\ref{eqFormulation}), and $h^*$ is the optimal objective
function value for problem (\ref{ineqFormulation}). We achieve the
above bound by implementing our algorithm twice. The first time
involves running the algorithm for problem (\ref{eqFormulation})
with some arbitrary $\mu$. This leads to a sequence of $x^k$
converging to some $x(\mu)$. Let $h(x(\mu)) = \sum_{i=1}^S
-U_i(x_i(\mu))$. By Theorem \ref{thm:mu}, we have
\be\label{ineq:h1}h(x(\mu))-\mu\leq h^*.\ee Let scalar $M$ be such
that $M=(a[h(x(\mu))-\mu])^{-1}$ and implement the algorithm one
more time for problem (\ref{eqFormulation}), with $\mu=1$ and the
objective function multiplied by $M$, i.e., the new objective is to
minimize $-M\sum_{i=1}^S U_i(x_i)-\sum_{i =1}^{S+L} \log{({x_i})}$,
subject to link capacity constraints.\footnote{When $M<0$, we can
simply add a constant to the original objective function to shift it
upward. Therefore the scalar $M$ can be assumed to be positive
without loss of generality. If no estimate on $M$ is available apriori, we can implement the distributed algorithm one more time in the beginning to obtain an estimate to generate the constant accordingly.} We obtain a sequence of $\tilde x^k$
converges to some $\tilde x(1)$. Denote the objective function value
as $h(\tilde x(1))$, then by applying the preceding theorem one more
time we have
\[Mh(\tilde x(1))-Mh^*\leq \mu=1,\]
which implies
\[h(\tilde x(1))-h^*\leq a[h\left(x(\mu)\right)-\mu]\leq a h^*\]
where the first inequality follows by definition of the positive
scalar $M$ and the second inequality follows from relation
(\ref{ineq:h1}). Hence we have the desired bound
(\ref{ineq:subOpReq}).

Therefore even with the introduction of the logarithmic barrier
function, the relative error in the objective function value can be
bounded by an arbitrarily small positive scalar at the cost of
performing the fast Newton-type algorithm twice.

\section{Simulation Results}\label{sec:sim}
Our simulation results demonstrate that the decentralized Newton
method significantly outperforms the existing methods in terms of
number of iterations. For our distributed Newton method, we used the following error tolerance levels: $p=10^{-3}$, $\epsilon = 10^{-4}$ [cf.\ Assumption \ref{ass:errorBoundEps}], $\tau = 10^{-2}$ [cf.\ Assumption \ref{ass:stepsize}] and when $\theta^{\bar k}>V =0.12$ we switch stepsize choice to be $d^k=1$ for all $k\geq \bar k$. With these error tolerance levels, both Assumptions \ref{ass:errorBoundEps} and \ref{ass:01} can be satisfied. We executed distributed Newton method twice with different scaling and barrier coefficients according to Section \ref{subsec:mu} with $B=10^{-2}$ to confine the error in the objective function value to be within $1\%$ of the optimal value. For a comprehensive comparison, we count both
the primal and dual iterations implemented through distributed error checking method described in Appendix \ref{app:errorbounds}.\footnote{In these simulations we
did not include the number of steps required to
compute the stepsize (distributed summation with finite termination) and to implement distributed error checking (maximum consensus) to allow the possibilities that other methods can be used to compute these.  Note that the number of iterations required by both of these computation is upper bounded by the number of sources, which is a small constant (8 for example) in our simulations.} In particular, in what
follows, the number of iterations of our method refers to the sum of dual iterations at each of the generated
primal iterate. In the simulation results, we compare our
distributed Newton method performance against both the
subgradient method used in \cite{LowLapsley} and the Newton-type
diagonal scaling dual method developed in \cite{Low}. Both of these methods were implemented using a constant stepsize that can guarantee convergence as shown in \cite{LowLapsley} and \cite{Low}.

\begin{figure}
\begin{center}
\includegraphics[scale = 0.6]{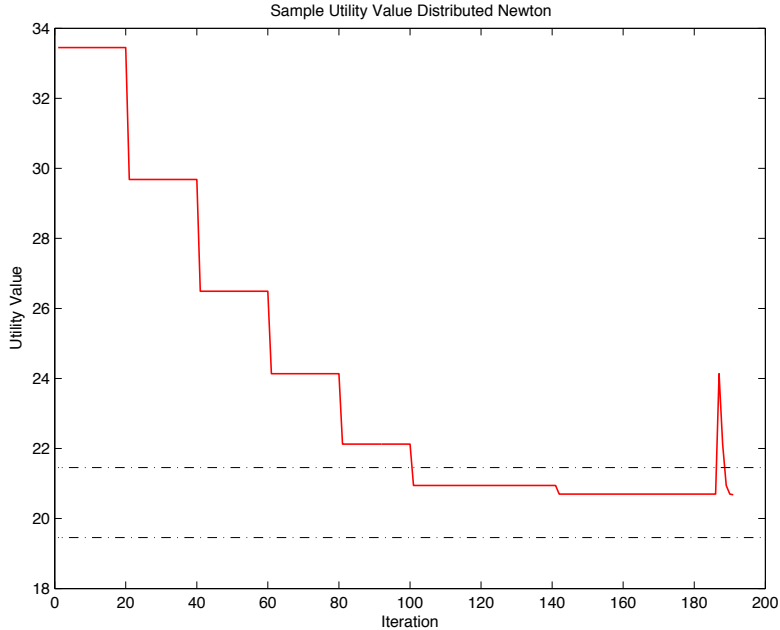}
\caption{One sample objective function value of distributed Newton method against number of iterations. The dotted black lines denote $\pm5\%$ interval of the optimal objective function value.}\label{fig:newton}
\end{center}
\end{figure}
\begin{figure}
\begin{center}
\vspace{-6cm}
\includegraphics[scale = 0.7]{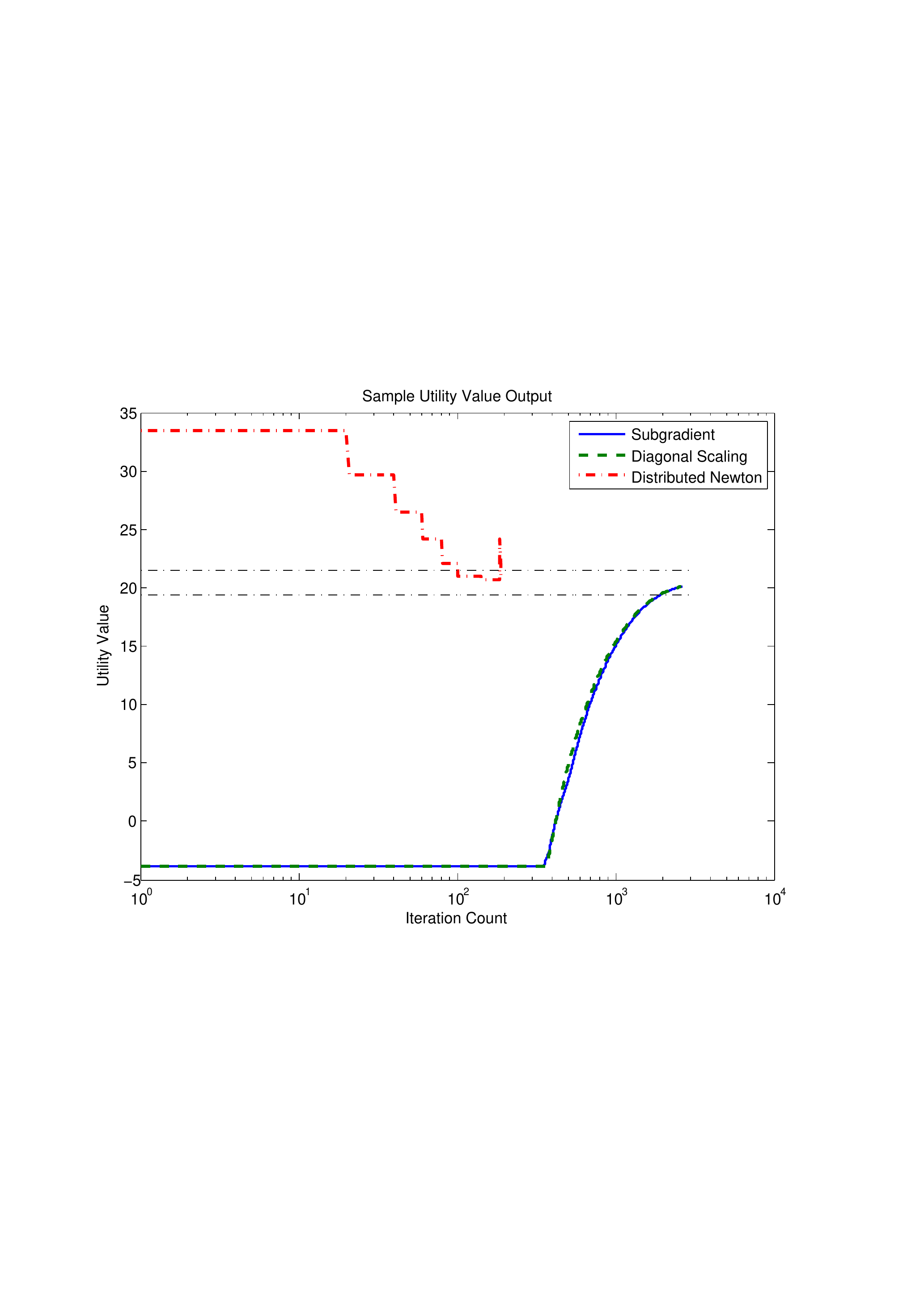}
\vspace{-6cm}
\caption{One sample objective function value of all three methods against log scaled iteration count. The dotted black lines denote $\pm5\%$ interval of the optimal objective function value. }\label{fig:sampleOutput}
\end{center}
\end{figure}

\begin{figure}
\begin{center}
\includegraphics[scale = 0.6]{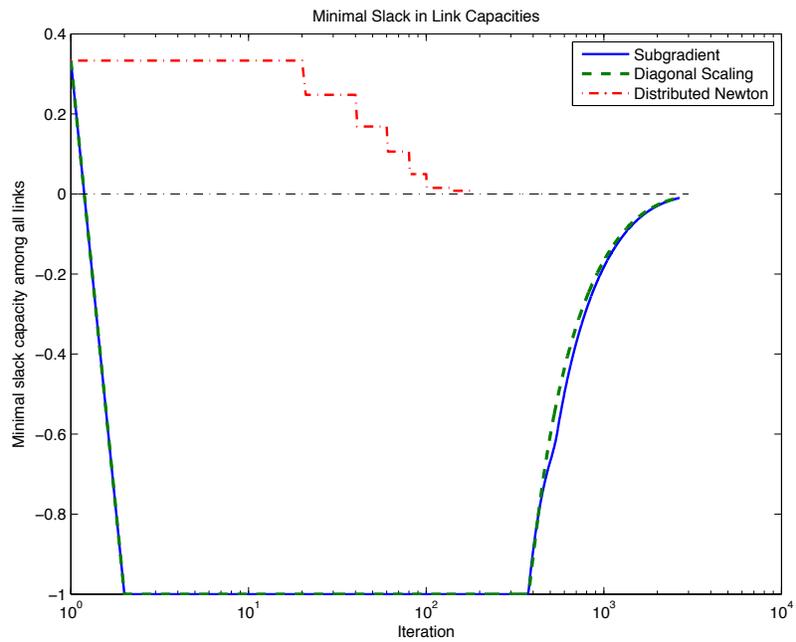}
\caption{Sample minimal slack in link capacity of all three methods against log scaled iteration count. Negative slack means violating capacity constraint.  The dotted black line denotes $0$.}\label{fig:slack}
\end{center}
\end{figure}

\begin{figure}
\begin{center}
\includegraphics[scale = 0.6]{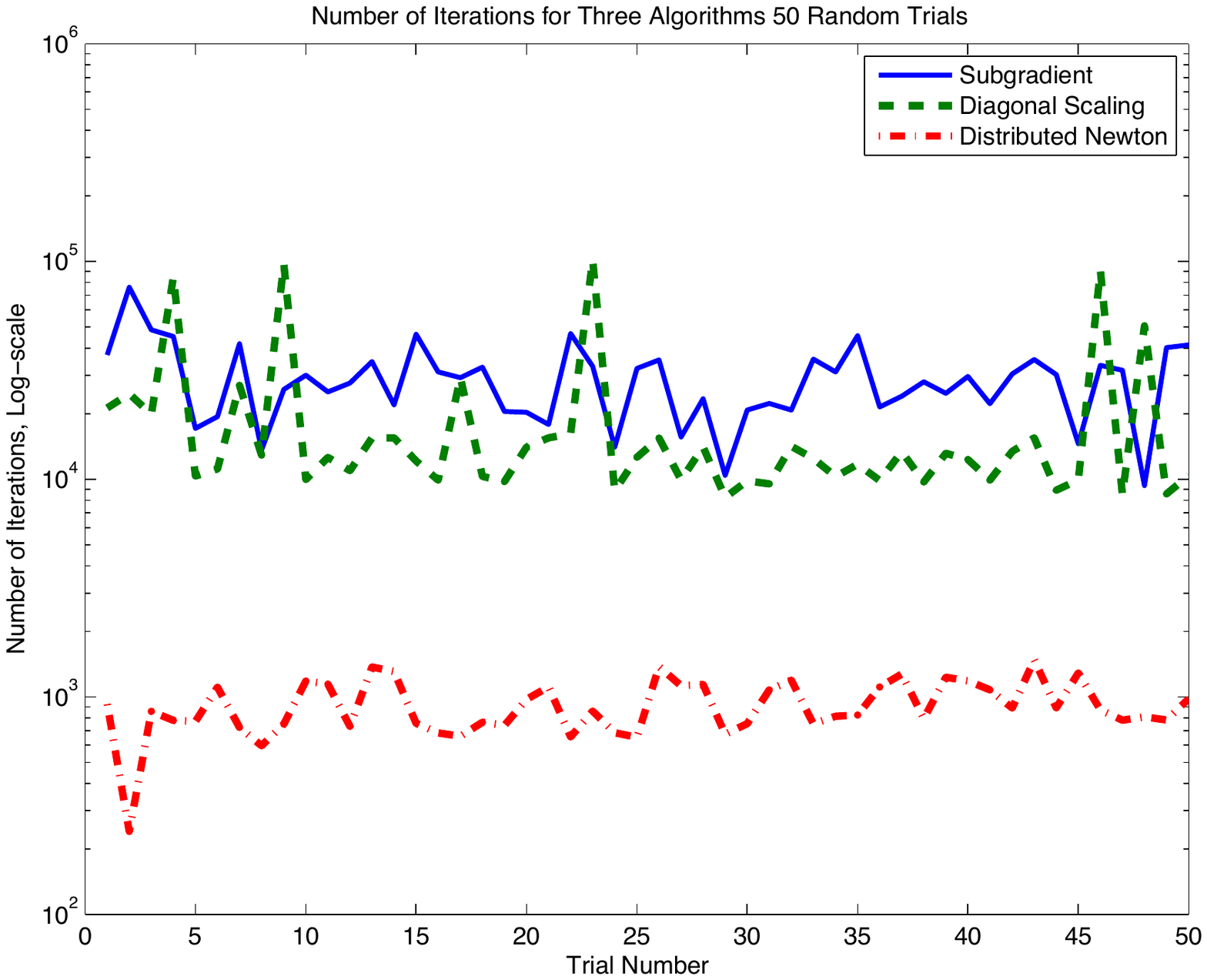}
\caption{Log scaled iteration count for the 3 methods implemented over 50 randomly generated networks.}\label{fig:50trials}
\end{center}
\end{figure}

A sample evolution of the objective function value of the distributed Newton method is presented in Figure \ref{fig:newton}. This is generated for the network in Figure $\ref{fig:withFlow}$. The horizontal line segments correspond to the dual iterations, where the primal vector stays constant, and each jump in the figure is a primal Newton update. The spike close to the end is a result of rescaling and using a new barrier coefficient in the second round of the distributed Newton algorithm [cf.\ Section \ref{subsec:mu}]. The black dotted lines indicate $\pm5\%$ interval around the optimal objective function value.

The other two algorithms were implemented for the same problem, and the objective function values are plotted in Figure \ref{fig:sampleOutput}, with logarithmic scaled iteration count on the $x$-axis. We use black dotted lines to indicate $\pm5\%$ interval around the optimal objective function value. While the subgradient and diagonal scaling methods have similar convergence behavior, the distributed Newton method significantly outperforms the two.

One of the important features of the distributed Newton method is that, unlike the other two algorithms, the generated primal iterates satisfy the link capacity constraint throughout the algorithm. This observation is confirmed by Figure \ref{fig:slack}, where the minimal slacks in links are shown for all three algorithms. The black dotted line is the zero line and a negative slack means violating the capacity constraint. The slacks that our distributed Newton method yields always stays above the zero line, while the other two only becomes feasible in the end.

To test the performances of the methods over general networks, we generated 50 random networks, with number of links $L=15$ and  number of sources $S=8$. Each routing matrix consists of $L\times R$ Bernoulli random variables.\footnote{When there exists a source that does not use any links or a link that is not used by any sources, we discard the routing matrix and generate another one.} All three methods are implemented over the 50 networks. We record the number of iterations upon termination for all 3 methods, and results are shown in Figure \ref{fig:50trials} on a log scale. The mean number of iterations to convergence from the 50 trials is $924$ for distributed Newton method, $20286$ for Newton-type diagonal scaling and $29315$ for subgradient method.

\section{Conclusions}\label{sec:conclu}
This paper develops a distributed Newton-type second order algorithm
for network utility maximization problems, which can achieve
superlinear convergence rate in primal iterates within some error
neighborhood. We show that the computation of the dual Newton step
can be implemented in a decentralized manner using a matrix
splitting scheme. The key feature of this scheme is that its
implementation uses an information exchange mechanism similar to
that involved in first order methods applied to this problem. We
show that even when the Newton direction and stepsize are computed
with some error, the method achieves superlinear convergence rate in
terms of primal iterations to an error neighborhood. Simulation
results also indicate significant improvement over traditional
distributed algorithms for network utility maximization problems.
Possible future directions include a more detailed analysis of the
relationship between the rate of convergence of the dual iterations
and the underlying topology of the network and investigating
convergence properties for a fixed finite truncation of dual
iterations.

\appendix
\section{Distributed Stepsize Computation}\label{app:stepsizeComp}
In this section, we describe a distributed procedure with finite
termination to compute stepsize $d^k$ according to Eq.\
(\ref{eq:stepsize}). We first note that in Eq.\ (\ref{eq:stepsize}),
the scalar $b\in (0,1)$ is predetermined and the only unknown term
is the inexact Newton decrement $\tilde\lambda (x^k)$. In order to
compute the value of $\tilde\lambda (x^k)$, we rewrite the inexact
Newton decrement based on definition (\ref{eq:lambdaInexact}) as
$\tilde{\lambda} (x^k) = \sqrt{\sum_{i\in \mathcal{S}} (\Delta
\tilde x_i^k)^2(H_k)_{ii} + \sum_{l\in \mathcal{L}} (\Delta \tilde
x_{l+S}^k)^2(H_k)_{(l+S)(l+S)}} $, or equivalently,
\be\label{eq:sum}\left(\tilde{\lambda} (x^k)\right)^2=\sum_{i\in
\mathcal{S}} (\Delta \tilde x_i^k)^2(H_k)_{ii} + \sum_{l\in
\mathcal{L}} (\Delta \tilde x_{l+S}^k)^2(H_k)_{(l+S)(l+S)}.\ee

In the sequel, we develop a distributed summation procedure to
compute this quantity by aggregating the local information available
on sources and links. A key feature of this procedure is that it
respects the simple information exchange mechanism used by first
order methods applied to the NUM problem: information about the links along the routes is aggregated
 and sent back to the sources using a
feedback mechanism. Over-counting is avoided using a novel off-line
construction, which forms an \textit{(undirected) auxiliary graph}
that contains information on sources sharing common links.

Given a network with source set $\mathcal{S} = \{1, 2, \ldots, S\}$
(each associated with a predetermined route) and link set
$\mathcal{L}=\{1, 2, \ldots, L\}$, we define the set of nodes in the
auxiliary graph as the set $\mathcal{S}$, i.e., each node
corresponds to a source (or equivalently, a flow) in the original
network. The edges are formed between sources that share common
links according to the following iterative construction. In this
construction, each source is equipped with a state (or color) and
each link is equipped with a set (a subset of sources), which are
updated using signals sent by the sources along their routes.

\noindent\textbf{Auxiliary Graph Construction:}
\begin{itemize}
\item Initialization:  Each link $l$ is associated with a set $\Theta_l=\emptyset$. One arbitrarily chosen source is marked as grey, and the rest are marked as white. The grey source sends a signal $\{$label, $i\}$ to its route. Each link $l$ receiving the signal, i.e., $l\in L(i)$, adds $i$ to $\Theta_l$.
\item Iteration:  In each iteration, first the sources update their states and
send out signals according to step (A). Each link $l$ then receives
signals sent in step (A) from the sources $i \in S(l)$ and updates
the set $\Theta_l$ according to step (B).
\begin{itemize}
\item[(A)] Each source $i$:
\begin{itemize}
\item[(A.a)] If it is white, it sums up $|\Theta_l|$ along its route, using the value $|\Theta_l|$ from the previous time.
\begin{itemize}
\item[(A.a.1)] If $\sum_{l\in L(i)}|\Theta_l|>0$, then the source $i$ is marked grey and it sends two signals $\{$neighbor, $i\}$ and $\{$label, $i\}$ to its route.
\item[(A.a.2)] Else, i.e., $\sum_{l\in L(i)}|\Theta_l|=0$, source $i$ does nothing for this iteration.
\end{itemize}
\item[(A.a)] Otherwise, i.e., it is grey, source $i$ does nothing.
\end{itemize}
\item[(B)] Each link $l$:
\begin{itemize}
\item[(B.a)]If $\Theta_l = \emptyset$:
\begin{itemize}
\item[(B.a.1)] If it experiences signal $\{$label, $i\}$ passing through it, it adds $i$ to $\Theta_l$. When there are more than one such signals during the same iteration, only the smallest $i$ is added. The signal keeps traversing the rest of its route.
\item[(B.a.2)] Otherwise link $l$ simply carries the signal(s) passing through it, if any, to the next link or node.
\end{itemize}
\item[(B.b)] Else, i.e., $\Theta_l\neq \emptyset$:
\begin{itemize}
\item[(B.b.1)] If it experiences signal $\{$neighbor, $i\}$ passing through it, an
edge $(i,j)$  with label $L_l$ is added to the auxiliary graph for
all $j\in\Theta_l$, and then $i$ is added to the set $\Theta_l$. If
there are more than one such signals during the same iteration, the
sources are added sequentially, and the resulting nodes in the set
$\Theta_l$ form a clique in the auxiliary graph. Link $l$ then stops
the signal, i.e., it does not pass the signals to the next link or
node.
\item[(B.b.2)] Otherwise link $l$ simply carries the signal(s) passing through it, if any, to the
next link or node.
\end{itemize}
\end{itemize}
\end{itemize}
\item Termination: Terminate after $S-1$ number of iterations.
\end{itemize}

The auxiliary graph construction process for the sample network in
Figure \ref{fig:bigPrimal} is illustrated in Figure \ref{fig:agc1},
where the left column reflects the color of the nodes in the
original network and the elements of the set $\Theta_l$ (labeled on
each link $l$), while the right column corresponds to the auxiliary
graph constructed after each iteration.\footnote{Note that depending
on construction, a network may have different auxiliary graphs
associated with it. Any of these graphs can be used in the
distributed summation procedure.}

We next investigate some properties of the auxiliary graph, which
will be used in proving that our distributed summation procedure
yields the corrects values.

\begin{figure}
\centering
\includegraphics[scale = 0.7]{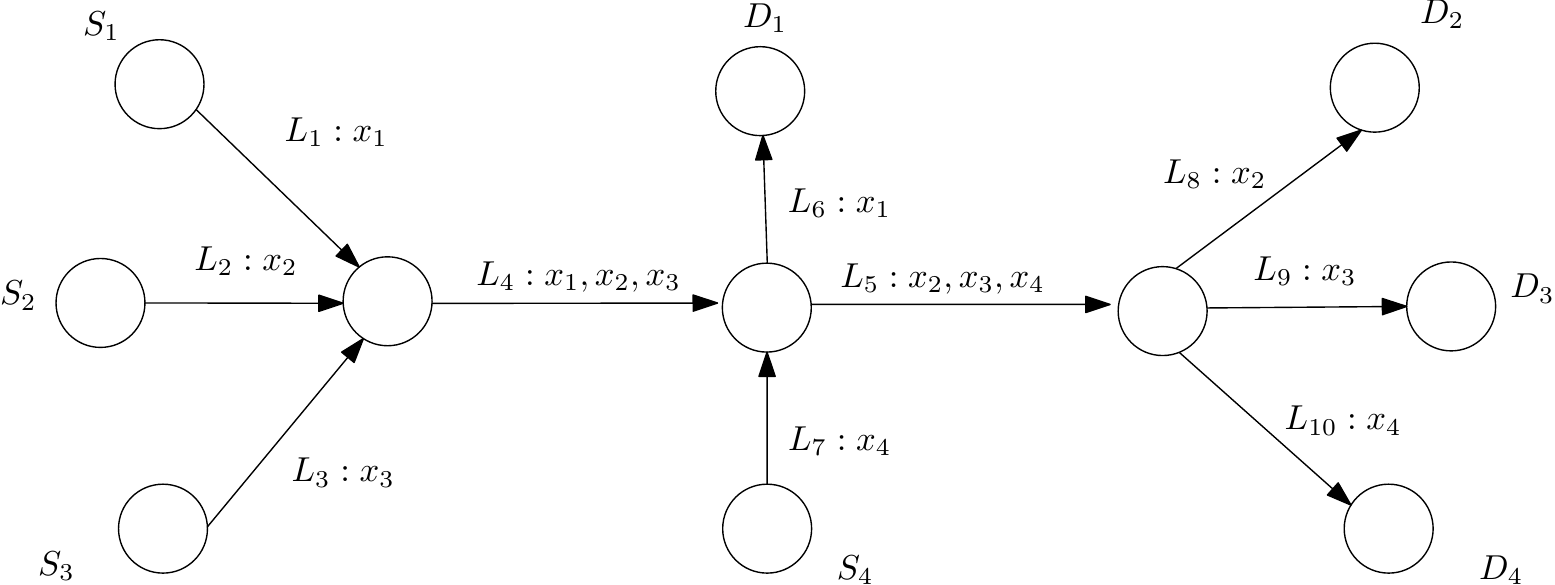}
\caption{A sample network with four sources and ten links. Each link
shows the flows (or sources) using that link. This example will be
used to illustrate different parts of the distributed stepsize
computation in this section.}\label{fig:bigPrimal}
\end{figure}

\begin{figure}
\subfloat[State of the network $t=0$]{
\includegraphics[scale=0.6]{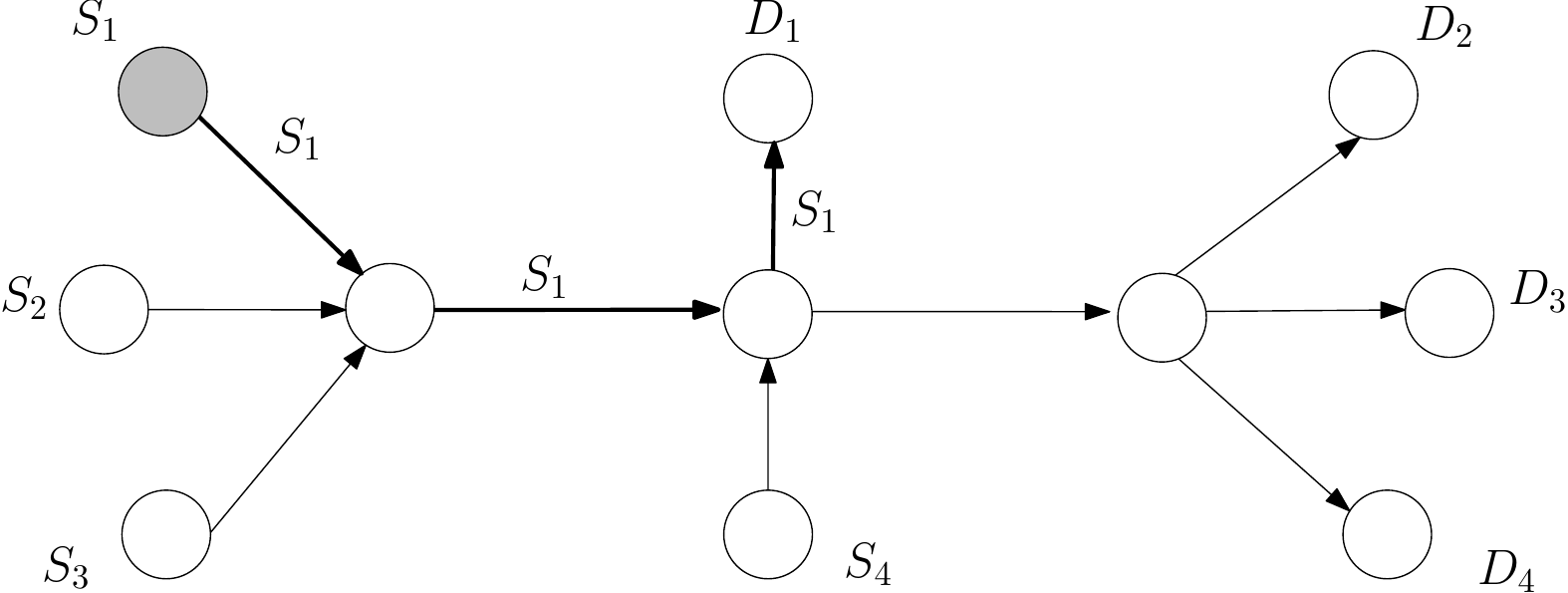}
}\qquad\qquad
\subfloat[State of the auxiliary graph $t=0$]{
\includegraphics[scale=0.5]{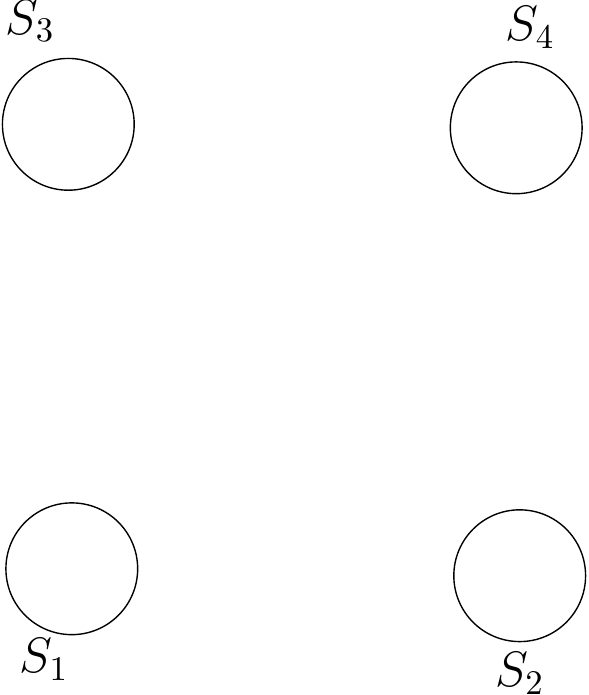}}\\
\subfloat[State of the network $t=1$]{
\includegraphics[scale=0.6]{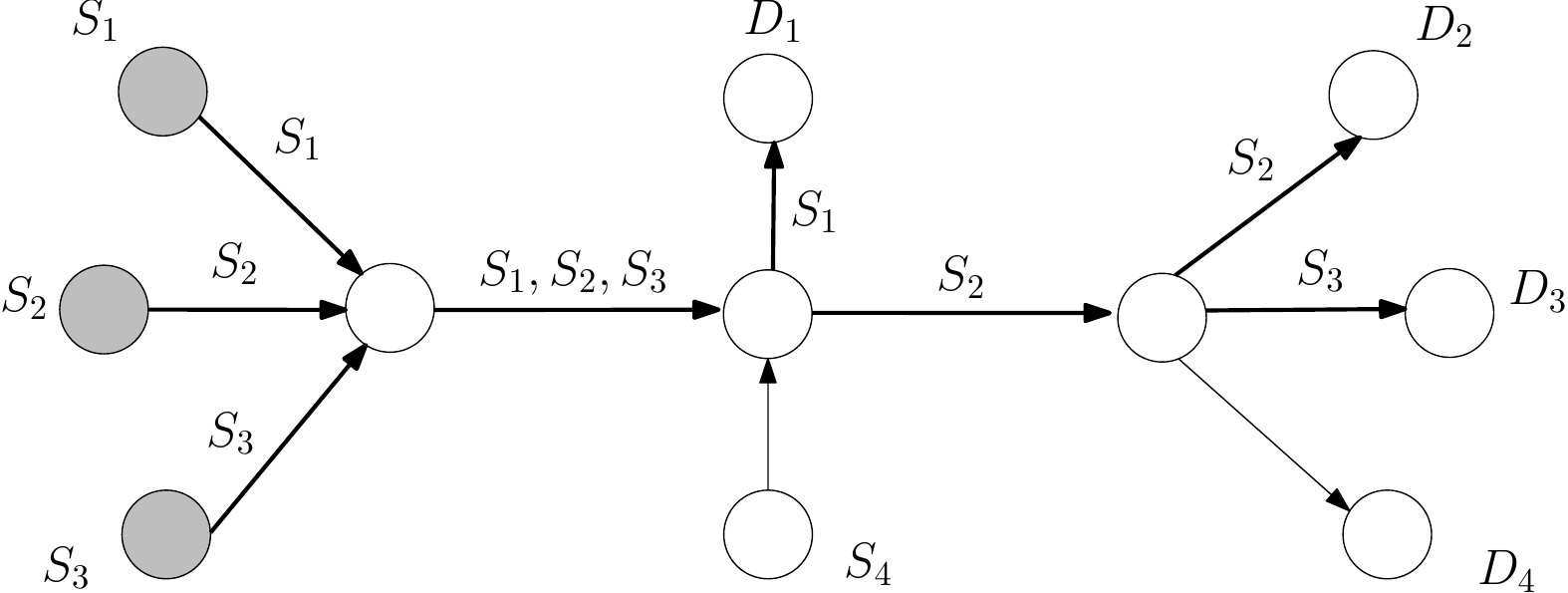}
}\qquad\qquad
\subfloat[State of the auxiliary graph $t=1$]{
\includegraphics[scale=0.5]{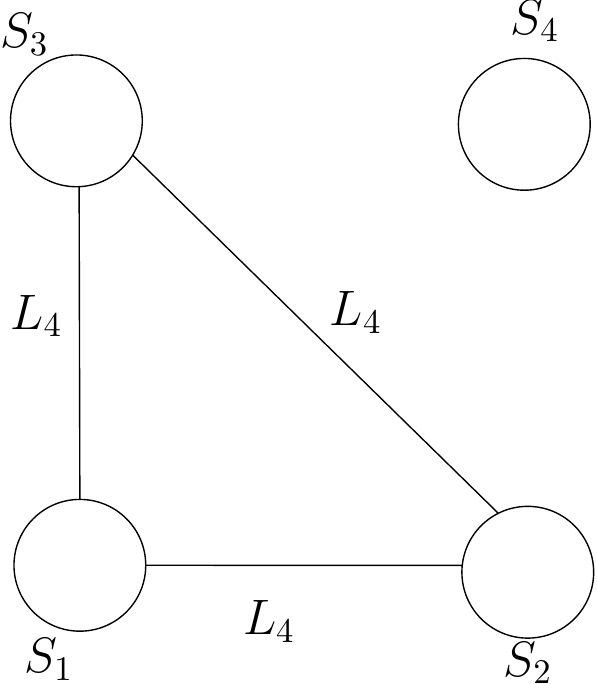}}
\\
\subfloat[State of the network $t=2$]{
\includegraphics[scale=0.6]{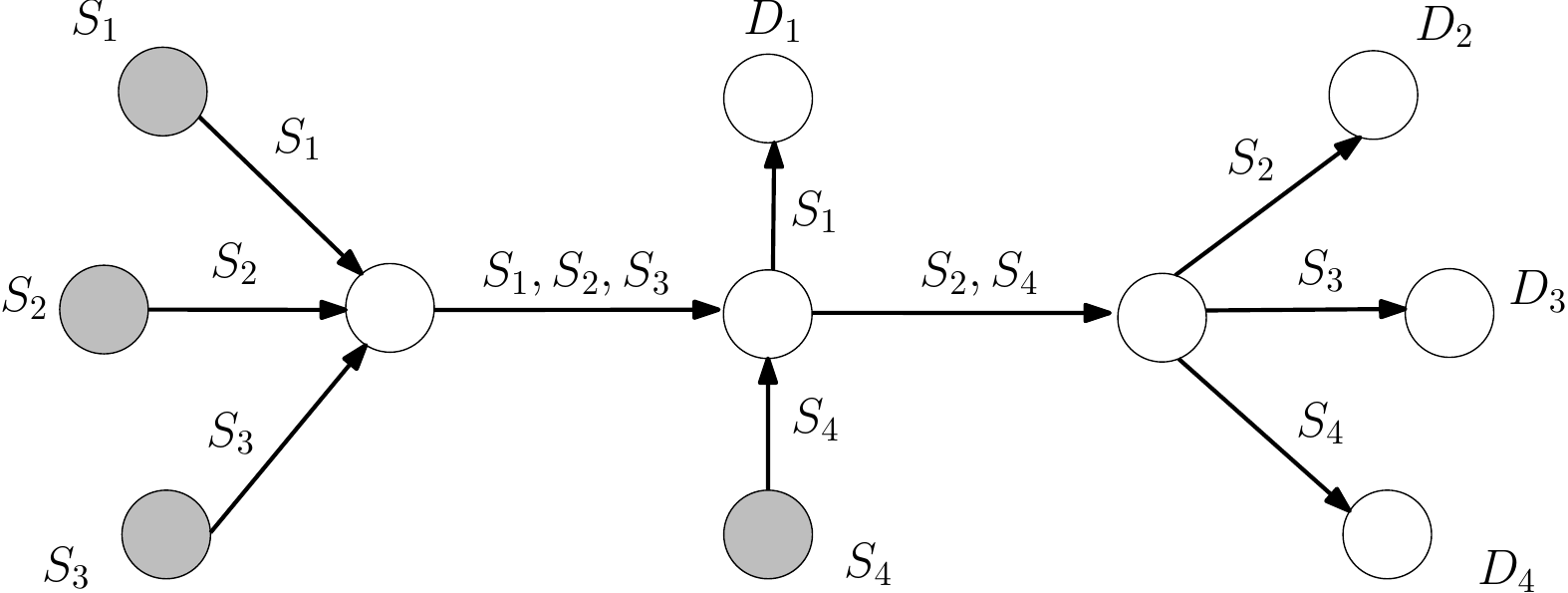}
}\qquad\qquad
\subfloat[State of the auxiliary graph $t=2$]{
\includegraphics[scale=0.5]{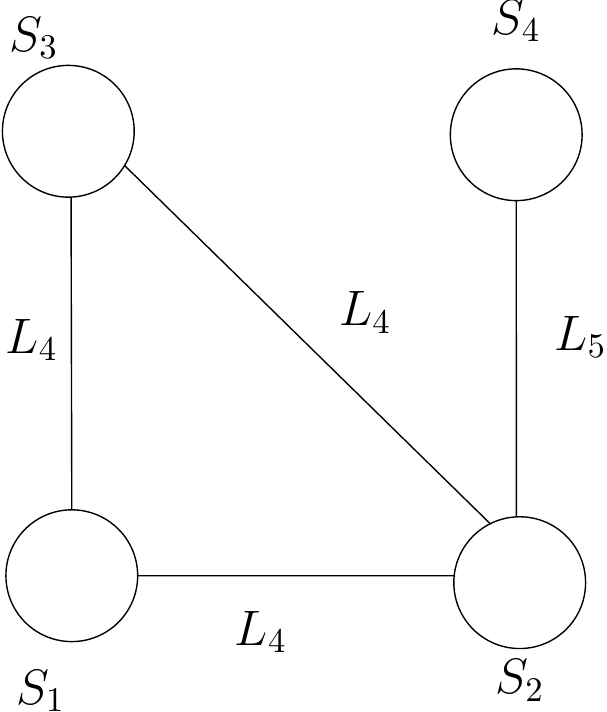}}\\
\subfloat[State of the network $t=3$]{
\includegraphics[scale=0.6]{primalagct5.pdf}
}\qquad\qquad
\subfloat[State of the auxiliary graph $t=3$]{
\includegraphics[scale=0.5]{auxGrapht4.pdf}}
\caption{Steps of the construction of the auxiliary graph
corresponding to the network in Figure \ref{fig:bigPrimal}. The
elements of $\Theta_l$ are labeled on link $l$. A link is drawn bold
in the original graph if $\Theta_l\neq \emptyset$.}\label{fig:agc1}
\end{figure}

\begin{lemma}\label{lemma:nocycle}
Consider a network and its auxiliary graph with sets
$\{\Theta_l\}_{l\in \mathcal{L}}$. The following statements hold:
\begin{itemize}
\item[(1)] For each link $l$, $\Theta_l\subset S(l)$.
\item[(2)] Source nodes $i, j$ are connected in the auxiliary graph if and only if there exists a link $l$, such that $\{i,j\}\subset \Theta_l$.
\item[(3)] The auxiliary graph does not contain multiple edges, i.e., there exists at most one edge between any pair of nodes.
\item[(4)] The auxiliary graph is connected.
\item[(5)] For each link $l$, $\Theta_l\neq \emptyset$.
\item[(6)] There is no simple cycle in the auxiliary graph other than that formed by only the edges with the same label.
\end{itemize}
\end{lemma}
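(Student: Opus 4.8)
The plan is to extract from the \textbf{Auxiliary Graph Construction} a few monotonicity and timing invariants, prove (1), (5), (2), (3), (4) from them by short inductions, and then reduce (6) to a cyclic chain of strict inequalities. Throughout I read the protocol as: each link chooses between mode (B.a) and mode (B.b) according to whether $\Theta_l$ is empty at the \emph{start} of the iteration, and each signal traverses its whole route within the iteration in which it is emitted. The first task is the invariants, by induction on the iteration index. The sets $\Theta_l$ are nondecreasing, since a source is only ever \emph{added} to $\Theta_l$, in (B.a.1) or (B.b.1); any source added to $\Theta_l$ is grey at that moment (only grey sources emit the $\{$label$\}$ and $\{$neighbor$\}$ signals that trigger additions) and is added while $l$ processes a signal that source emitted along its own route, so $l\in L(i)$, i.e. $i\in S(l)$ --- this is (1). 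Once $\Theta_l$ becomes nonempty it stays in mode (B.b) forever, so there is exactly one source --- call it the \emph{owner} $\alpha(l)$ --- ever added to $\Theta_l$ via (B.a.1), added in the iteration $\mu(l):=t_{\alpha(l)}$ in which $\alpha(l)$ turns grey (or at initialization, with $\mu(l)=0$, if $\alpha(l)=i_0$). Every other member of $\Theta_l$ joins via (B.b.1) while processing the unique $\{$neighbor$\}$ signal it emits upon turning grey; that signal is consumed at the first link of its route already in mode (B.b), so each source $i\ne i_0$ joins exactly one set this way --- call that link $\beta(i)$ --- and, since $\beta(i)$ must already be nonempty at the start of iteration $t_i$, one has the strict inequality
\be\label{plan:time} t_i > \mu(\beta(i)).\ee

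For (5) I would show greyness propagates one distance layer per iteration. Connectivity of the dual routing graph of Definition~\ref{def:dualGraph} makes the source-adjacency graph (sources adjacent iff they share a link) connected, so every source lies within distance $S-1$ of $i_0$. At initialization every link of $L(i_0)$ has nonempty $\Theta_l$, so every neighbour of $i_0$ turns grey at iteration $1$ and emits its $\{$label$\}$ signal, filling all links of its route; inductively a source at distance $d$ turns grey and emits $\{$label$\}$ by iteration $d\le S-1$, so every link --- being used by some source --- has a nonempty $\Theta_l$ by termination, which is (5), and $\beta(i)$ is well defined for every $i\ne i_0$. Then (2) is immediate: an edge $\{i,j\}$ is created only in (B.b.1), at a moment when $i$ and $j$ both lie in one $\Theta_l$, so $\{i,j\}\subset\Theta_l$ by monotonicity; conversely if $\{i,j\}\subset\Theta_l$ then whichever of $i,j$ entered $\Theta_l$ later entered via (B.b.1) (there is only one (B.a.1)-entry per link, into an empty set) and so created the edge. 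For (4), when $i\ne i_0$ turns grey it gets edges to all of $\Theta_{\beta(i)}$, a nonempty set containing the owner $\alpha(\beta(i))$, who turned grey at $\mu(\beta(i))<t_i$ by \req{plan:time}; ordering sources by turn-grey time and inducting, each new source is joined to the already-built, connected portion of the graph.

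The heart of the matter is the sub-lemma $|\Theta_l\cap\Theta_{l'}|\le1$ for distinct links $l\ne l'$, from which (3) follows (a second edge on $\{i,j\}$ would force $\{i,j\}\subset\Theta_l\cap\Theta_{l'}$ for distinct labels, or a repeated-label edge, each easily excluded). I would prove it by contradiction: if distinct $i,j$ both lay in $\Theta_l\cap\Theta_{l'}$ then, since $\beta$ is single-valued and $i_0$ can only be an owner, one is forced into the pattern $i=\alpha(l),\ \beta(i)=l',\ j=\alpha(l'),\ \beta(j)=l$; applying \req{plan:time} to $\beta(i)=l'$ gives $t_i>\mu(l')=t_j$, and to $\beta(j)=l$ gives $t_j>\mu(l)=t_i$, a contradiction.

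Finally (6), by the same device one level up. Suppose a simple cycle with edges of $\ge 2$ labels exists and take a minimal one $C=v_0v_1\cdots v_{k-1}v_0$ with $\{v_m,v_{m+1}\}$ labelled $\ell_m$. Since each $\Theta_l$ induces a clique (by (2) and the sub-lemma), $C$ cannot carry two edges of the same label --- otherwise a chord splits $C$ into two shorter cycles, at least one still mixed, contradicting minimality --- so the $\ell_m$ are pairwise distinct; and $i_0\notin C$ (an $i_0$ on $C$ would have to be a $\{$neighbor$\}$-joiner of some clique, but $i_0$ emits no $\{$neighbor$\}$ signal). For each $m$ at least one endpoint of edge $\ell_m$ is not $\alpha(\ell_m)$ and hence has $\beta=\ell_m$; tracing this around $C$ forces either $\beta(v_m)=\ell_m$ for all $m$ or $\beta(v_{m+1})=\ell_m$ for all $m$ (the two orientations of $C$; ``both'' would give $\ell_{m-1}=\ell_m$). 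In the first case $\beta(v_m)=\ell_m\ne\ell_{m-1}$ forces $v_m=\alpha(\ell_{m-1})$, so $\mu(\ell_{m-1})=t_{v_m}$, while \req{plan:time} gives $\mu(\ell_m)<t_{v_m}$; hence $\mu(\ell_m)<\mu(\ell_{m-1})$ for every $m$, a cyclic chain of strict inequalities, impossible; the second case is symmetric. The step I expect to be the real obstacle is exactly this last argument: making the protocol semantics precise enough (synchronous modes, within-iteration signal propagation, the smallest-index tie-break) that the owner/$\beta$ dichotomy and the strict inequality \req{plan:time} genuinely hold, and carrying out the bookkeeping when cliques have size $\ge 3$ or several sources turn grey in one iteration.
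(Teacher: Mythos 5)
Your proposal is correct, and for parts (3) and (6) it takes a genuinely different route than the paper. You organize the argument around a structural characterization: each source $i\neq i_0$ joins $\Theta_l$ either as the unique owner $\alpha(l)$ (first label-adder, empty link) or as the unique $\beta$-joiner $\beta(i)=l$ (first nonempty link on $i$'s route), these two modes being mutually exclusive, with the strict timing inequality $t_i>\mu(\beta(i))$. Your sub-lemma $|\Theta_l\cap\Theta_{l'}|\le 1$ then follows by a short parity-of-$\alpha/\beta$ argument plus a two-step timing contradiction, and (3) falls out. For (6), your orientation-propagation idea does in fact close: once you fix a witness for one edge of the minimal mixed cycle $C$, the implication $\beta(v_m)=\ell_m\ \Rightarrow\ \alpha(\ell_{m-1})=v_m\ \Rightarrow\ \beta(v_{m-1})=\ell_{m-1}$ (valid since $i_0\notin C$, $C$ is simple, and the labels are pairwise distinct) propagates all the way around, giving a monotone chain $\mu(\ell_{m-1})>\mu(\ell_m)$ that cannot close cyclically; the case you worried about (a vertex owning both its incident labels, so that neither orientation passes through it) is in fact impossible, because the propagation reaching that vertex from either side would name a second owner of one of those labels. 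By contrast, the paper proves (3) via a case analysis of the three ways an edge can be created (their scenarios I/II/III) and argues the neighbor-signal is consumed only once, and proves (6) by collapsing, per link $l$, the clique $\Theta_l\setminus\{i_l^*\}$ to a single node and then counting edges ($N-1$ edges in a connected graph is a tree). Your approach makes the invariants (uniqueness of $\alpha$ and $\beta$, the strict time ordering) explicit and reuses them for (3), (4), (5), and (6), whereas the paper's arguments are more local to each part; the paper's merge-and-count argument for (6) is shorter but leans more heavily on the scenario taxonomy from (3) and on the tacit claim that a source lies in at most one $\Theta_l\setminus\{i_l^*\}$, which is precisely the uniqueness of $\beta$ you isolate. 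One small imprecision: the fact that $\Theta_l$ induces a clique comes directly from step (B.b.1) of the protocol, not from (2) plus the sub-lemma as your parenthetical suggests, though this does not affect the argument.
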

\begin{proof} We prove the above statements in the order they are stated.
\begin{itemize}
\item[(1)] Part (1) follows immediately from our auxiliary graph construction, because each source only sends signals to links on its own route and the links only update their set $\Theta_l$ when they experience some signals passing through them.
\item[(2)] In the auxiliary graph construction, a link is added to the auxiliary graph only in
step (B.b.1), where part (2) clearly holds.
\item[(3)] From the first two parts, there is an edge between source nodes $i,j$,
i.e., $\{i,j\}\subset \Theta_l$ for some $l$, only if $i$ and $j$
share link $l$ in the original network. From the auxiliary graph
construction, if sources $i$ and $j$ share link $l$ then an edge
with label $L_l$ between $i$ and $j$ is formed at some iteration if
and only if one of the following three cases holds:
 \begin{itemize}
\item[I] In the beginning of the previous iteration $\Theta_l =\emptyset$ and sources $i, j$ are both white. During the previous iteration, source $i$ becomes grey and sends out the signal $\{$label,$i\}$ to link $l$, hence $\Theta_l=\{i\}$. In the current iteration, source $j$ with $\sum_{m\in L(j)}|\Theta_m|\geq |\Theta_l|>0$ becomes grey and sends out signal $\{$neighbor, $j\}$ to link $l$;
\item[II] The symmetric case of I, where first source $j$ becomes grey and one iteration later source $i$ becomes grey.
\item[III] In the beginning of the previous iteration $\Theta_l =\emptyset$ and sources $i, j$ are both white. During the previous iteration, some other source $t$ with $l\in L(t)$ becomes grey and sends out the signal $\{$label,$t\}$ to link $l$, hence $\Theta_l=\{t\}$. In the current iteration, both source $i$ and $j$ with $\sum_{m\in L(i)}|\Theta_m|\geq |\Theta_l|>0$ and $\sum_{m\in L(j)}|\Theta_m|\geq |\Theta_l|>0$ become grey and send out signals $\{$neighbor, $i\}$ and $\{$neighbor, $j\}$ to link $l$.
\end{itemize}
Hence if an edge connecting nodes $i$ and $j$ exists in the auxiliary graph, then in the beginning of the iteration when the edge is formed at least one of the nodes is white, and by the end of the iteration both nodes are colored grey and stay grey. Therefore the edges between $i$ and $j$ in the auxiliary graph can only be formed during exactly one iteration.

We next show that only one such edge can be formed in one iteration.
The first two cases are symmetric, and without loss of generality we
only consider cases I and III. In both of these cases, an edge
between $i$ and $j$ is formed with label $L_l$ only if link $l$
receives the signal $\{$neighbor, $j\}$ and $\Theta_l\neq
\emptyset$. In step (B.b.1) of the auxiliary graph construction, the
first link with $\Theta_l\neq \emptyset$ stops the signal from
passing to the rest of its route, hence at most one edge between $i$
and $j$ can be generated. Hence part (3) holds.
\item[(4)] By using a similar analysis as above, it is straightforward to see that if at one iteration source $i$ from the original network becomes grey, then in the next iteration all the sources which share link with $i$ become grey and are connected to $i$ in the auxiliary graph. By induction, we conclude that all the nodes in the auxiliary graph corresponding to sources colored grey in the original network are connected to the source node marked grey in the initialization step, and hence these nodes form a  connected component.

We next show that all nodes are colored grey when the auxiliary
graph construction procedure terminates. We first argue that at
least one node is marked grey from white at each iteration before
all nodes are marked grey. Assume the contrary is true, that is at
some iteration no more nodes are marked grey and there exists a set
of white nodes $S^*$. This implies that the nodes in $S^*$ do not
share any links with the nodes in $\mathcal{S}\backslash S^*$ and
thus there is no path from any source in the set
$\mathcal{S}\backslash S^*$ to any source in $S^*$ using the links
(including the feedback mechanisms) in the original network.
However, this contradicts the fact that all links form a strongly
connected graph. Therefore after $S-1$ iterations all nodes in the
original graph are colored grey and therefore we have the desired
statement hold.
\item[(5)] Analysis for part (3) suggests that all the connected nodes in the
auxiliary graph are colored grey. In view of the part (4), all
the sources are colored grey when the auxiliary graph construction
procedure terminates. Step (B.a.1) implies that a link has
$\Theta_l=\emptyset$ if all sources $i\in S(l)$ are white. Since
each link is used by at least one source, and all sources are grey,
part (5) holds.

\item[(6)] We prove part (6) by showing the auxiliary graph, when the cycles formed by the
edges of the same label are removed, is acyclic. For each link $l$,
let $i_l^*$ denote the first element added to the set $\Theta_l$ in
the auxiliary graph construction process, which is uniquely defined
for each link $l$ by Step (B.a.1). In the set $\mathcal{S}$ for each
link $l$, we define an equivalence class by $i\sim j$ if
$\{i,j\}\subset\Theta_l\backslash\{i_l^*\}$, which implies if and
only if $i$ and $j$ are connected in the auxiliary graph and $i\sim
j$, this link is formed by scenario III as defined above in the
proof of part (3).

The nodes in each equivalence class are connected by edges with the
same label, which form the undesired cycles. We remove these cycles
by merging each equivalence class into one representative node,
which inherits all the edges going between the nodes in the
equivalence class and $\mathcal{S}\backslash \Theta_l$ in the
auxiliary graph, and is connected to $i_l^*$ via one edge. Note the
resulting graph is connected, since the auxiliary graph is by
part (4) and all the remaining edges are generated under
scenarios I and II as defined in the proof of part (3).

We now show that the resulting graph contains no cycle. From cases I
and II, it follows immediately that an edge is generated when one
more source becomes grey. Therefore if number of noes is $N$, we
have $N-1$ edges. In a connected graph, this implies we have a tree,
i.e. acyclic, and hence part (6) holds.
\end{itemize}

\end{proof}

We denote the set of links inducing edges in the auxiliary graph as
$L^*=\{l\mid |\Theta_l|>1\}$ and for each source $i$ the set of
links which induce edges in the auxiliary graph as $L^*(i)=\{l \mid
i\in\Theta_l, l\in L^*\}$ for notational convenience. Each link can
identify if it is in $L^*$ by the cardinality of the set $\Theta_l$.
Each source $i$ can obtain $|L^*(i)|$ along the links on its route.
The auxiliary graph remains the same throughout the
distributed Newton algorithm and only depends on the structure of
the network (independent of the utility functions and link
capacities), therefore given a network, the above construction only
needs to be preformed once prior to execution of the distributed
Newton algorithm.

We next present a distributed procedure to compute the sum in Eq.\
(\ref{eq:sum}) and show that the sets $\Theta_l$ constructed using
the above procedure avoids over-counting and enables computation of
the correct values.\footnote{Note that the execution of the
procedure only uses the sets $\Theta_l$, $L^*$, and $L^*(i)$.We will
use the structure of the auxiliary graph in proving the correctness
of the procedure.}

\noindent\textbf{Distributed Summation Procedure:}
\begin{itemize}
\item Initialization: Each link $l$ initializes to $z_l(0)=0$. Each source $i$ computes $y_i^* = (\Delta \tilde x_{i}^k)^2(H_k)_{ii}$
and each link $l$ computes $z_l^* =\frac{1}{|S(l)|}(\Delta \tilde
x_{l+S}^k)^2(H_k)_{(l+S)(l+S)}$. Each source $i$ aggregates the sum
\be\label{eq:y0}y_i(0) = y_i^*+ \sum_{l\in L(i)} z_l^*\ee along its
route.

\item Iteration for $t=1,2,\ldots, S$. The following 3 steps are completed in the order they are presented.
\begin{itemize}
    \item[a.] Each source $i$ sends its current value $y_i(t)$ to its route.
    \item[b.] Each link $l$ uses the $y_i(t)$ received and computes \be\label{eq:zupdate}z_l(t) =
    \sum_{i\in \Theta_l} y_i(t-1) - \left(|\Theta_l|-1\right)z_l(t-1).\ee
    \item[c.] Each source $i$ aggregates information along its route from the links $l\in L^*(i)$ and computes
    \be\label{eq:yupdate} y_i(t) = \sum_{l\in L^*(i)} z_l(t) -\left(|L^*(i)|-1\right) y_i(t-1).\ee
\end{itemize}
\item Termination: Terminate after $S$ number of iterations.
\end{itemize}
\newpage
\begin{figure}
\subfloat[State of the network $t=0$]{
\includegraphics[scale=0.6]{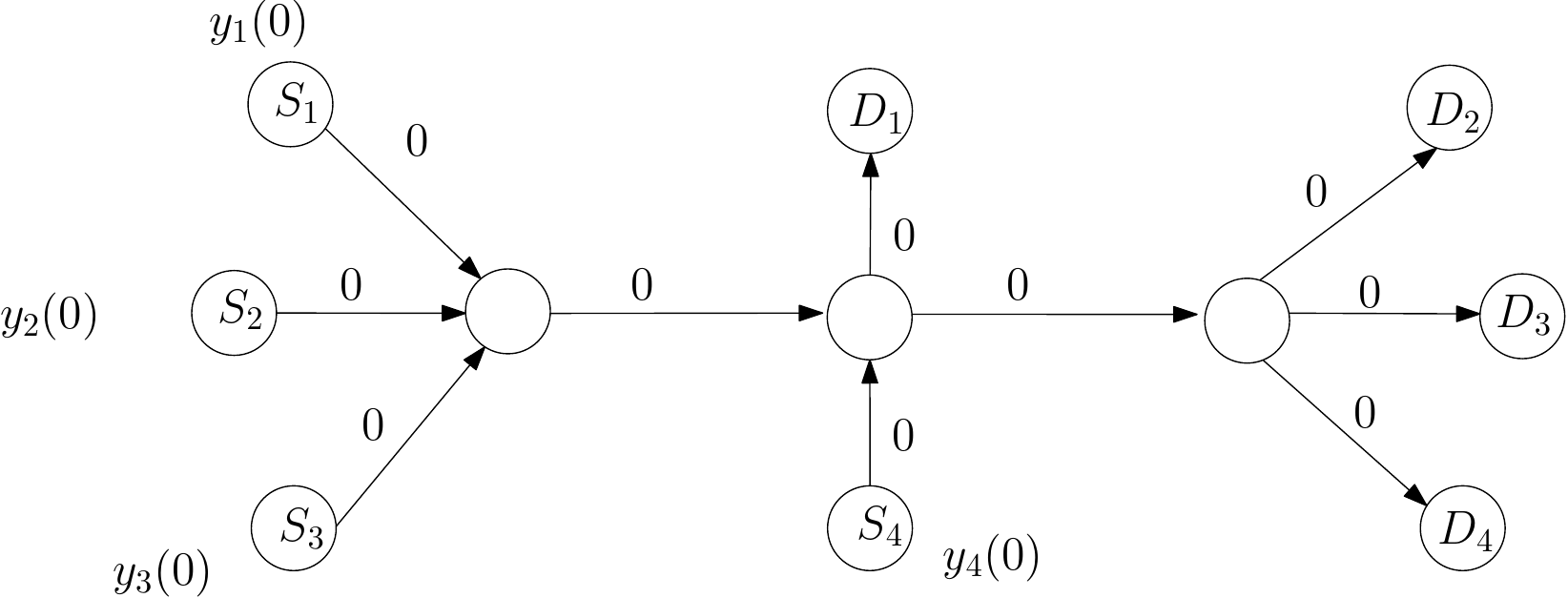}
}\hspace{0.6cm}
\subfloat[State of auxiliary graph $t=0$]{
\includegraphics[scale=0.5]{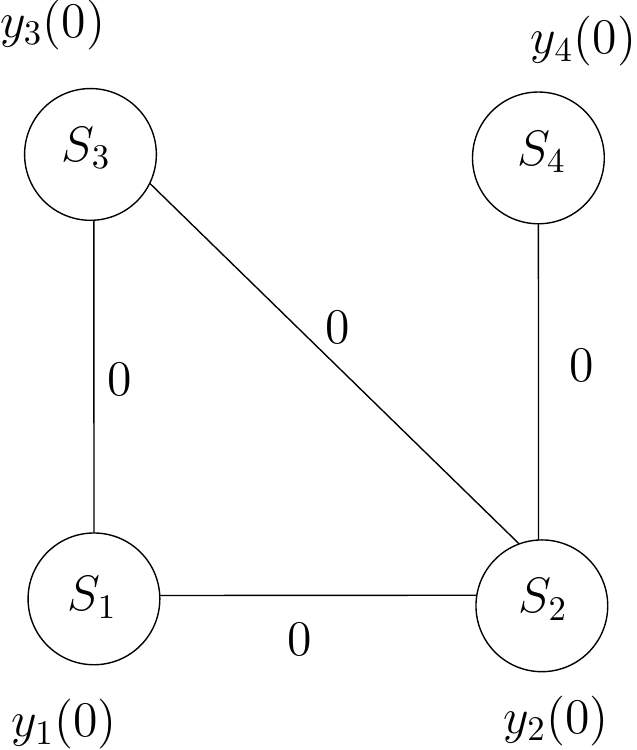}}
\\
\hspace{-0.5cm}\subfloat[State of the network $t=1$]{
\includegraphics[scale=0.6]{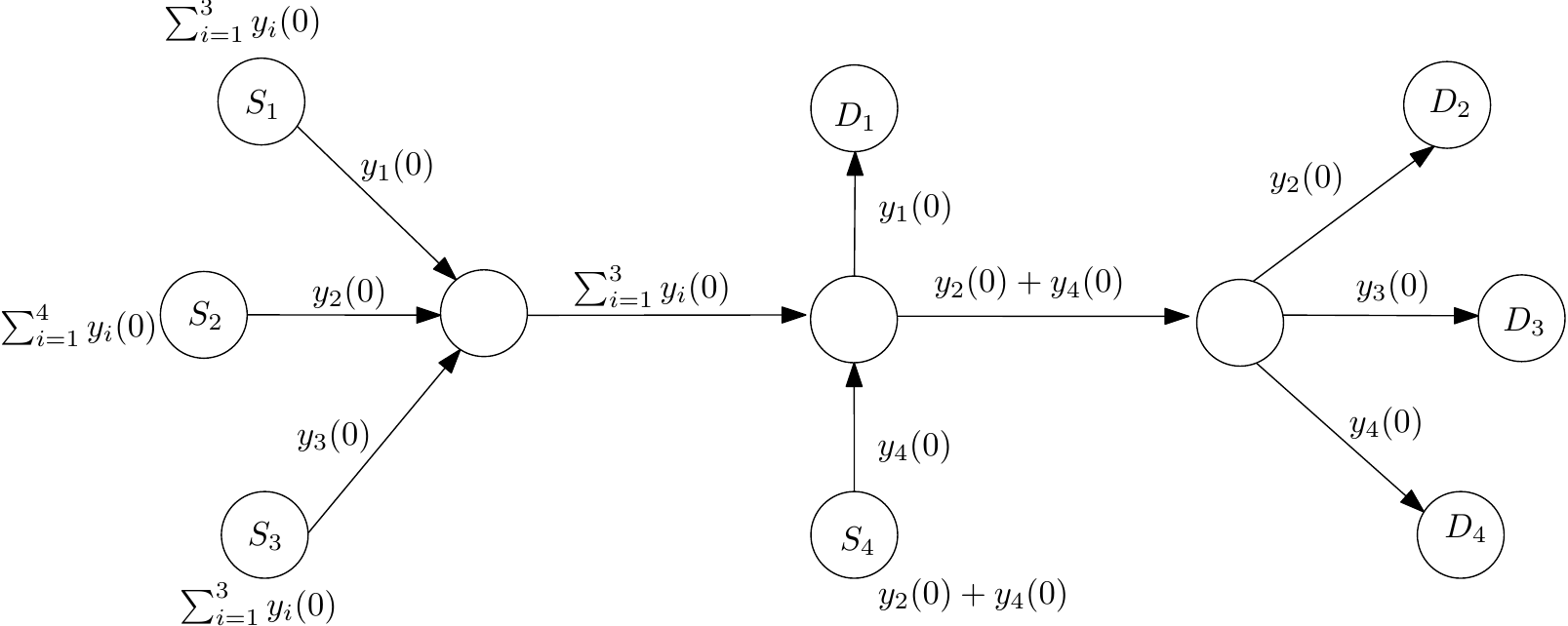}
}\hspace{0.5cm}
\subfloat[State of auxiliary graph $t=1$]{
\includegraphics[scale=0.5]{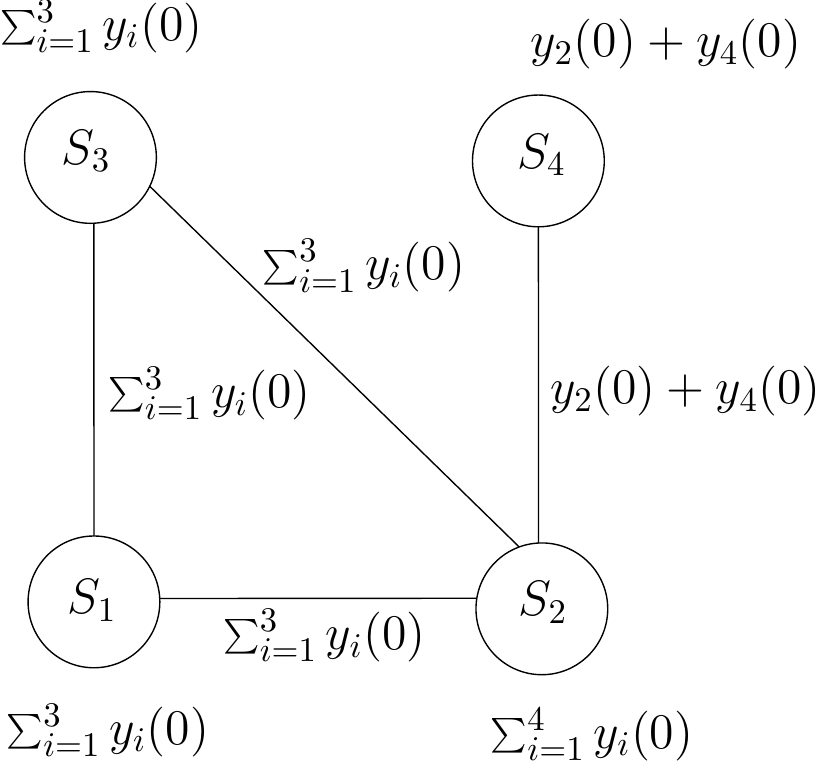}}\hspace{-6cm}\\
\subfloat[State of the network $t=2$]{
\includegraphics[scale=0.6]{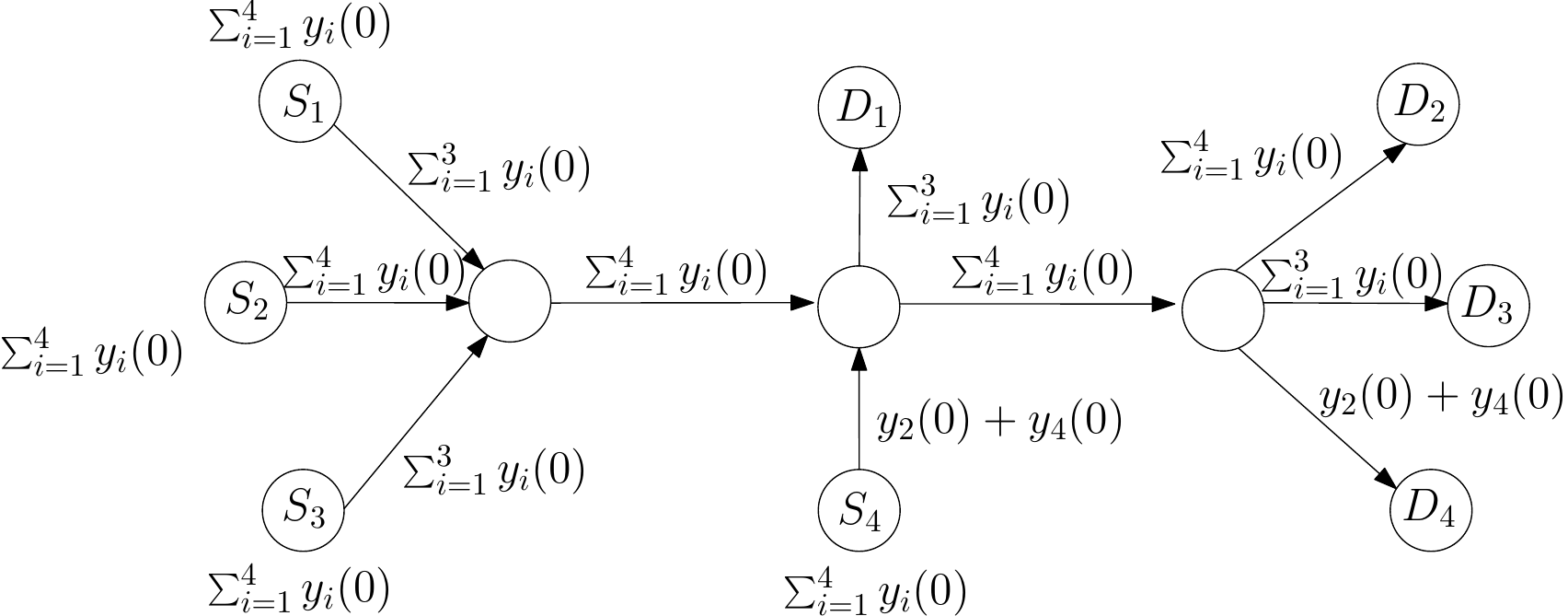}
}\hspace{0.3cm}
\subfloat[State of auxiliary graph $t=2$]{
\includegraphics[scale=0.5]{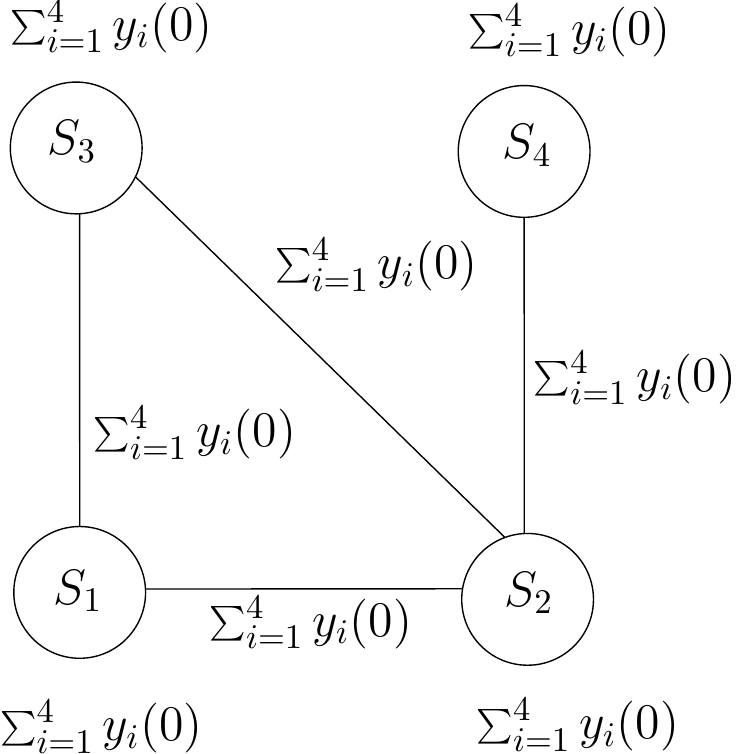}}\hspace{-6cm}\\
\subfloat[State of the network $t=3$]{
\includegraphics[scale=0.6]{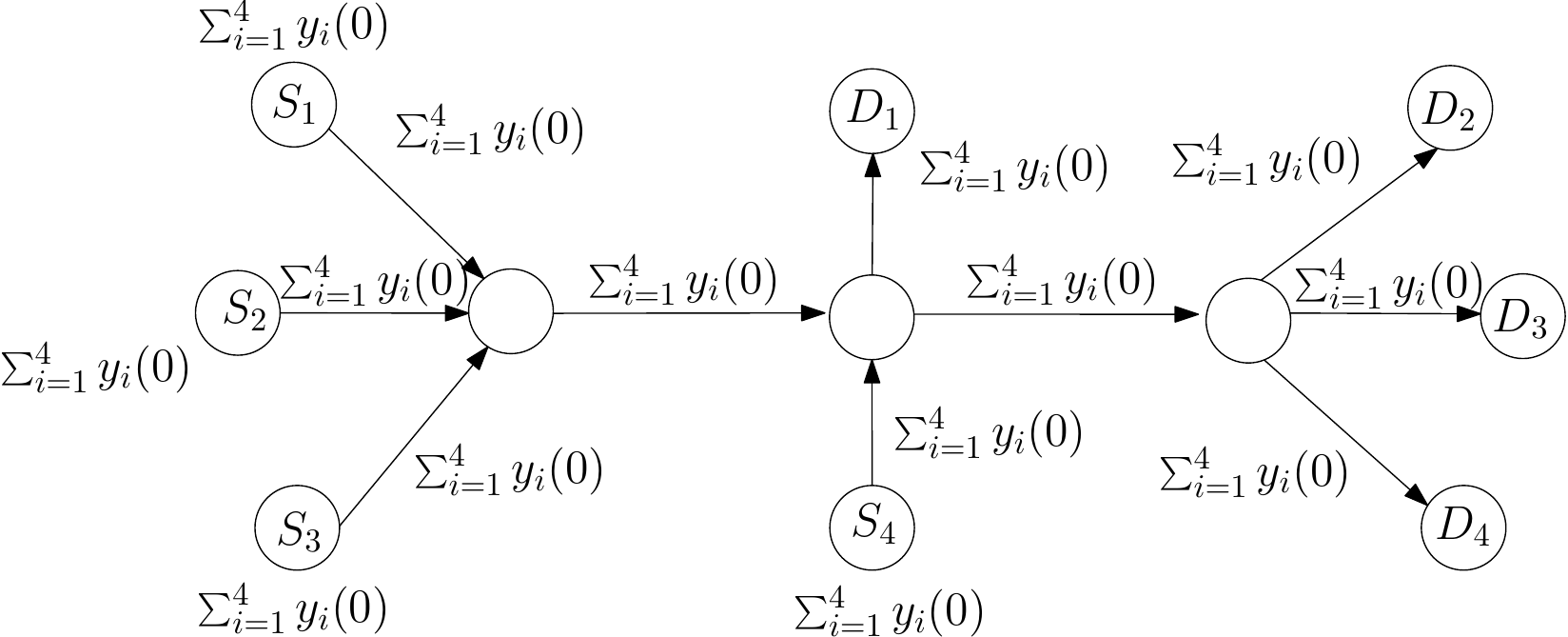}
}\hspace{0.5cm}
\subfloat[State of auxiliary graph $t=3$]{
\includegraphics[scale=0.5]{auxSum2.pdf}}\hspace{-6cm}\caption{Evolution of distributed summation process, where $\rho_i = y_i(0)$ and destination node is indicated using a dot with the same color as its corresponding source.}\label{fig:sumEvolution}
\end{figure}
\begin{figure}
\subfloat[State of the network $t=4$]{
\includegraphics[scale=0.6]{primalSumt3.pdf}
}\hspace{0.5cm}
\subfloat[State of auxiliary graph $t=4$]{
\includegraphics[scale=0.5]{auxSum2.pdf}}\hspace{-6cm}\caption{Evolution of distributed summation process continued, where $\rho_i = y_i(0)$ and destination node is indicated using a dot with the same color as its corresponding source.}\label{fig:sumEvolution1}
\end{figure}

By the diagonal structure of the Hessian matrix $H_k$, the scalars $
(\Delta \tilde x_i^k)^2(H_k)_{ii}$ and $(\Delta \tilde
x_{l+S}^k)^2(H_k)_{(l+S)(l+S)}$ are available to the corresponding
source $i$ and link $l$ respectively, hence $z_l^*$  and $y_i^*$ can
be computed using local information. In the above process, each
source only uses aggregate information along its route and each link
$l$ only uses information from sources $i\in S(l)$. The evolution of
the distributed summation procedure for the sample network in Figure
\ref{fig:bigPrimal} is shown in Figures \ref{fig:sumEvolution} and
\ref{fig:sumEvolution1}.

We next establish two lemmas, which quantifies the expansion of the
$t$-hop neighborhood in the auxiliary graph for the links and
sources. This will be key in showing that the aforementioned
summation procedure yields the correct values at the sources and the
links. For each source $i$, we use the notation $ \mathcal{N}_i(t)$
to denote the set of nodes that are connected to node $i$ by a path
of length at most $t$ in the auxiliary graph. Note that
$\mathcal{N}_i(0)=\{i\}$. We say that node $i$ is {\it $t$-hops
away} from node $j$ is the length of the shortest path between nodes
$i$ and $j$ is $t$.

\begin{lemma}\label{lemma:nbd}
Consider a network and its auxiliary graph with sets
$\{\Theta_l\}_{l\in \mathcal{L}}$. For any
 link $l$ and all $t\geq 1$, we have, \be\label{eq:nbdSets} \mathcal{N}_i(t)\cap \mathcal{N}_j(t) =
\cup_{m\in \Theta_l}\mathcal{N}_m(t-1)\qquad \hbox{for }i,j\in
\Theta_l \hbox{ with } i\neq j. \ee
\end{lemma}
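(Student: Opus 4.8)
The plan is to prove the identity $\mathcal{N}_i(t)\cap \mathcal{N}_j(t) = \cup_{m\in \Theta_l}\mathcal{N}_m(t-1)$ by double inclusion, exploiting the acyclicity structure of the auxiliary graph established in Lemma \ref{lemma:nocycle}, in particular parts (2), (3), and (6). The crucial structural fact I would first isolate is that for a fixed link $l$, the nodes of $\Theta_l$ form a clique in the auxiliary graph (from part (2)), and moreover by part (6) this clique together with all its induced cycles consists only of edges with the same label $L_l$; consequently, removing the ``internal'' edges among $\Theta_l$ (the ones labeled $L_l$) disconnects the graph into pieces, exactly one of which is attached to each equivalence class within $\Theta_l$ via the special element $i_l^*$. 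The key topological consequence I want to extract is: \emph{any path between two distinct nodes $i,j\in\Theta_l$ in the auxiliary graph either uses an edge labeled $L_l$ directly connecting two members of $\Theta_l$, or — after contracting the $L_l$-labeled clique to a single vertex as in the proof of part (6) — corresponds to a path through that contracted vertex, hence in the original graph passes through $\Theta_l$.} In other words, every path from $i$ to $j$ must ``enter'' the set $\Theta_l$.

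For the inclusion $\supseteq$: take $m\in\Theta_l$ and $n\in\mathcal{N}_m(t-1)$, so there is a path of length $\le t-1$ from $m$ to $n$. Since $i,j\in\Theta_l$ and all of $\Theta_l$ is a clique, both $i$ and $j$ are adjacent to $m$ (or equal to $m$), so extending by one edge gives a path of length $\le t$ from $i$ to $n$ and from $j$ to $n$; hence $n\in\mathcal{N}_i(t)\cap\mathcal{N}_j(t)$. (When $n=m$ itself, $n\in\Theta_l\subseteq\mathcal{N}_i(1)\cap\mathcal{N}_j(1)$ and $t\ge 1$.) For the inclusion $\subseteq$: take $n\in\mathcal{N}_i(t)\cap\mathcal{N}_j(t)$ and fix shortest paths $P_i$ from $i$ to $n$ and $P_j$ from $j$ to $n$, each of length $\le t$. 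Using the structural fact above together with the tree structure after contraction (part (6)), I would argue that the last time such a path from $i$ (resp. $j$) ``leaves'' the region reachable only through $\Theta_l$, it must pass through some node $m\in\Theta_l$, and the remaining subpath from $m$ to $n$ has length $\le t-1$; the point is that the edge from $i$ into $\Theta_l$ (or the single edge from $i$ to the contracted clique-vertex) is forced and consumes one hop. Care is needed in the degenerate cases where $n\in\Theta_l$ or where $P_i$ and $P_j$ share an initial edge; these are handled directly since $\Theta_l$ is a clique of diameter one.

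The main obstacle I anticipate is making the ``every path from $i$ to $j$ (or from $i$ to the rest of the graph) passes through $\Theta_l$, and the first hop out of $\{i\}$ toward $n$ can be taken inside $\Theta_l$'' argument fully rigorous in the presence of the multi-edge-free but clique-containing structure — i.e. correctly bookkeeping the equivalence classes of part (6) and the special vertices $i_l^*$, and verifying that contracting the $L_l$-clique does not shorten the relevant distances by more than the one hop we want to account for. I would manage this by working in the contracted tree $T$ obtained in the proof of Lemma \ref{lemma:nocycle}(6): in $T$, the clique on $\Theta_l$ becomes a single vertex $v_l$, and distances in the auxiliary graph from a node outside $\Theta_l$ to a node in $\Theta_l$ differ from distances to $v_l$ in $T$ by a controlled amount; since $T$ is a tree, $\mathcal{N}_i^{T}(t)\cap\mathcal{N}_j^{T}(t)$ for $i,j$ both adjacent to $v_l$ is exactly $\mathcal{N}_{v_l}^{T}(t-1)$, and pulling this back through the contraction yields $\cup_{m\in\Theta_l}\mathcal{N}_m(t-1)$. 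Translating between $T$-distances and auxiliary-graph distances, and checking the boundary cases, is the technical heart; the rest is routine.
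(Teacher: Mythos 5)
Your overall strategy of double inclusion, using the clique structure of $\Theta_l$ (part (2) of Lemma~\ref{lemma:nocycle}) for $\supseteq$ and the acyclicity property (part (6)) for $\subseteq$, is the right one, and the $\supseteq$ half of your argument is correct and essentially identical to the paper's. But the $\subseteq$ half, as you've set it up, has a genuine gap. You propose to contract the clique on $\Theta_l$ to a single vertex $v_l$ and then appeal to the tree identity $\mathcal{N}_i^{T}(t)\cap\mathcal{N}_j^{T}(t)=\mathcal{N}_{v_l}^{T}(t-1)$ ``for $i,j$ both adjacent to $v_l$.'' This is incoherent: if $i,j\in\Theta_l$ and all of $\Theta_l$ has been contracted into $v_l$, then $i$ and $j$ no longer exist as separate vertices in $T$ — they \emph{are} $v_l$ — so they cannot be ``adjacent to $v_l$'' and the asserted identity is not well-posed. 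The contraction used in the paper's proof of part (6) is also not the one you describe: there, only the equivalence class $\Theta_l\setminus\{i_l^*\}$ is merged, and $i_l^*$ survives as a separate node attached by one edge; so even with that contraction, at most one of $i,j$ survives. You would need a different quotient construction, together with a proof that the relevant distances are preserved in exactly the way you want, and you yourself flag that this distance bookkeeping is ``the technical heart'' — which is precisely the part that is missing.

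The paper avoids this issue entirely with a direct argument: for $n\in\mathcal{N}_i(t)\cap\mathcal{N}_j(t)$, either $n\in\mathcal{N}_i(t-1)$ (and then $n\in\mathcal{N}_i(t-1)\subset\cup_{m\in\Theta_l}\mathcal{N}_m(t-1)$ since $i\in\Theta_l$), or one fixes paths $P(i,n)$ of length $t$ and $P(j,n)$ of length $\le t$, lets $n^*$ be the first common vertex of the two paths, and observes that $n^*\notin\Theta_l$ would give a simple cycle using the $L_l$-labeled edge $\{i,j\}$ together with non-$L_l$ edges, contradicting part (6). Hence $n^*\in\Theta_l$, the first hop $P(i,n^*)$ has length $1$ because $\Theta_l$ is a clique, and the remaining subpath $P(n^*,n)$ has length $t-1$, giving $n\in\mathcal{N}_{n^*}(t-1)$. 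If you want to salvage your contraction idea, the cleaner route is probably to keep the vertices of $\Theta_l$ intact but delete the internal $L_l$-edges except those incident to one fixed representative, but at that point you are essentially reconstructing the paper's branching-vertex argument with extra overhead.
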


\begin{proof}
Since the source nodes $i,j\in \Theta_l$, by part (2) of Lemma
\ref{lemma:nocycle}, they are $1$-hop away from all other nodes in
$\Theta_l$. Hence if a source node $n$ is in $\mathcal{N}_m(t-1)$
for $m\in \Theta_l$, then $n$ is at most $t$-hops away from $i$ or
$j$. This yields \be\label{eq:nbd1}\cup_{m\in
\Theta_l}\mathcal{N}_m(t-1)\subset \mathcal{N}_i(t)\cap
\mathcal{N}_j(t).\ee

On the other hand, if $n\in \mathcal{N}_i(t)\cap \mathcal{N}_j(t)$,
then we have either $n\in \mathcal{N}_i(t-1)$ and hence $n \in
\cup_{m\in \Theta_l}\mathcal{N}_m(t-1)$ or \[n\in
\left(\mathcal{N}_i(t)\backslash \mathcal{N}_i(t-1)\right)\cap
\mathcal{N}_j(t).\] Let $P(a,b)$ denote an ordered set of nodes on
the path between nodes $a$ and $b$ including $b$ but not $a$ for
notational convenience. Then the above relation implies there exists
a path with $|P(i,n)|=t$ and $|P(j,n)|\leq t$. Let $n^* \in
P(i,n)\cap P(j,n)$ and $P(j,n^*)\cap P(j, n^*)=\emptyset$. The node $n^*$ exists, because the two paths both end
at $n$. If $n^*\not\in \Theta_l$, then we have a cycle of
$\{P(i,n^*),P(n^*,j), P(j,i)\}$, which includes an edge with label
$L_l$ between $i$ and $j$ and other edges. In view of part (6) of
Lemma \ref{lemma:nocycle}, this leads to a contradiction. Therefore
we obtain $n^*\in \Theta_l$, implying $P(i,n)=\{P(i,n^*),
P(n^*,n)\}$. Since $i$ is connected to all nodes in $\Theta_l$,
$|P(i,n^*)|=1$ and hence $|P(n^*,n)|=t-1$, which implies $n\in
\mathcal{N}_{n^*}(t-1)\subset\cup_{m\in
\Theta_l}\mathcal{N}_m(t-1)$. Therefore the above analysis yields
\[\mathcal{N}_i(t)\cap \mathcal{N}_j(t)\subset \cup_{m\in \Theta_l}\mathcal{N}_m(t-1).\]
With relation (\ref{eq:nbd1}), this establishes the desired equality.
\end{proof}

\begin{lemma}\label{lemma:nbdS}
Consider a network and its auxiliary graph with sets
$\{\Theta_l\}_{l\in \mathcal{L}}$.  For any source $i$, and all $t\geq 1$, we have,
\be\label{eq:nbdSets}
\left(\cup_{j\in\Theta_l}\mathcal{N}_j(t)\right)\cap\left(\cup_{j\in\Theta_m}\mathcal{N}_j(t)\right)
= \mathcal{N}_i(t) \qquad \hbox{for } l,m \in L^*(i) \hbox{ with } l\neq m.\ee
\end{lemma}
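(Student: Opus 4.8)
The plan is to prove this by the same strategy used for Lemma~\ref{lemma:nbd}, but now exploiting that the two sets $\Theta_l$ and $\Theta_m$ share the source $i$ and invoking the acyclicity statement, part (6) of Lemma~\ref{lemma:nocycle}. First, since $l,m\in L^*(i)$ we have $i\in\Theta_l\cap\Theta_m$, so $\mathcal{N}_i(t)\subset\cup_{j\in\Theta_l}\mathcal{N}_j(t)$ and $\mathcal{N}_i(t)\subset\cup_{j\in\Theta_m}\mathcal{N}_j(t)$, which yields the inclusion ``$\supset$'' with no work. It is also useful to note that $\Theta_l\cap\Theta_m=\{i\}$: a second common source $j\neq i$ would, by part (2) of Lemma~\ref{lemma:nocycle} and the construction, force an edge $(i,j)$ with label $L_l$ and an edge $(i,j)$ with label $L_m$, contradicting the absence of multiple edges (part (3)).

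For the reverse inclusion take $n$ in the left-hand side, so $n\in\mathcal{N}_{j_1}(t)$ for some $j_1\in\Theta_l$ and $n\in\mathcal{N}_{j_2}(t)$ for some $j_2\in\Theta_m$. If $j_1=i$ or $j_2=i$ we are immediately done, so assume $j_1\neq i$ and $j_2\neq i$; then by part (2) the auxiliary graph contains the edges $(i,j_1)$ of label $L_l$ and $(i,j_2)$ of label $L_m$, and $j_1\neq j_2$ since a source common to $\Theta_l$ and $\Theta_m$ must equal $i$. Suppose, for contradiction, that $n\notin\mathcal{N}_i(t)$, and write $d(\cdot,\cdot)$ for graph distance in the auxiliary graph. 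From $d(i,j_1)=1$ and $d(j_1,n)\le t$ we get $d(i,n)\le t+1$, whence $d(i,n)=t+1$ and hence $d(j_1,n)=t$; similarly $d(j_2,n)=t$. Prepending the edge $(i,j_1)$ to a shortest $j_1$--$n$ path produces a \emph{simple} shortest $i$--$n$ path $Q$ of length $t+1$ (simple because a shortest $j_1$--$n$ path cannot pass through $i$, else $d(j_1,n)\ge t+2$), and likewise a simple shortest $i$--$n$ path $Q'$ of length $t+1$ beginning with the edge $(i,j_2)$.

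The heart of the argument is the ``first meeting point'' construction of Lemma~\ref{lemma:nbd}: let $n^*$ be the first vertex encountered on $Q$ after $i$ that also lies on $Q'$ (it exists since $n$ lies on both). Along a shortest path the distance from $i$ to any intermediate vertex equals its graph distance, so $n^*\neq j_1$ (if $j_1\in Q'$ it would be the second vertex of $Q'$, namely $j_2$) and $n^*\neq j_2$ (symmetrically), hence $n^*$ is at graph distance at least $2$ from $i$. Let $\alpha$ and $\beta$ be the sub-paths of $Q$ and of $Q'$ running from $i$ to $n^*$; by the choice of $n^*$ they meet only at $i$ and $n^*$, so $\alpha\cup\beta^{-1}$ is a \emph{simple} cycle of length $|\alpha|+|\beta|\ge 4$ that contains the edge $(i,j_1)$ of label $L_l$ and the edge $(i,j_2)$ of label $L_m$. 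Since $l\neq m$, this simple cycle carries two distinct labels, contradicting part (6) of Lemma~\ref{lemma:nocycle}. Therefore $n\in\mathcal{N}_i(t)$, which establishes ``$\subset$'' and completes the proof.

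I expect the main difficulty to lie entirely in the bookkeeping of the meeting-point step: carefully excluding the degenerate possibilities $n^*\in\{i,j_1,j_2\}$, verifying that $\alpha$ and $\beta$ intersect only at their common endpoints (so that $\alpha\cup\beta^{-1}$ is genuinely a simple cycle), and confirming that the cycle truly mixes the labels $L_l$ and $L_m$. Each of these relies only on facts already in hand (parts (2), (3), (6) of Lemma~\ref{lemma:nocycle}) together with the elementary observation that a sub-path of a shortest path is itself shortest, so nothing beyond a careful transcription of the analogous step of Lemma~\ref{lemma:nbd} should be required.
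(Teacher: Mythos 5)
Your proof is correct and follows the paper's own strategy: the trivial containment $\mathcal{N}_i(t)\subset\cap$, then a ``first meeting point'' construction producing a simple cycle through edges with labels $L_l$ and $L_m$, contradicting part~(6) of Lemma~\ref{lemma:nocycle}. Your shortest-path and distance bookkeeping makes explicit several things the paper leaves implicit — why $\Theta_l\cap\Theta_m=\{i\}$ (hence $j_1\neq j_2$, the $p\neq q$ in the paper's notation), why $i$ cannot recur on the chosen paths, why $n^*\notin\{i,j_1,j_2\}$, and why the resulting cycle is genuinely simple — so if anything your version is a tighter rendering of the same argument.
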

\begin{proof}
Since $l,m \in L^*(i)$, we have $i\in \Theta_l$ and $i\in\Theta_m$, this yields,
\[\mathcal{N}_i(t)\subset\left(\cup_{j\in\Theta_l}\mathcal{N}_j(t)\right)\cap\left(\cup_{j\in\Theta_m}\mathcal{N}_j(t)\right) .\]

On the other hand, assume there exists a node $n$ with $n\in \left(\cup_{j\in\Theta_l}\mathcal{N}_j(t)\right)\cap\left(\cup_{j\in\Theta_m}\mathcal{N}_j(t)\right)$, and $n\not \in \mathcal{N}_i(t)$. Then there exists a node $p\in \Theta_l$ with $p\neq i$ and $n\in\mathcal{N}_p(t)$. Similarly there exists a node $q\in \Theta_m$ with $q\neq i$ and $n\in\mathcal{N}_q(t)$. Let $P(a,b)$ denote an ordered set nodes on the path between nodes $a$ and $b$ including $b$ but not $a$ for notational convenience. Let $n^* \in P(p,n)\cap P(q,n)$ and $P(p,n^*)\cap P(q, n^*)=\emptyset$. The node $n^*$ exists, because the two paths both end at $n$. Since nodes $i, p$ are connected via an edge with label $L_l$ and $i, q$ are connected via an edge with label $L_m$, we have a cycle of $\{P(i,p), P(p,n), P(n,q), P(q,i)\}$, which contradicts part (6) in Lemma \ref{lemma:nocycle} and we have
\[\left(\cup_{j\in\Theta_l}\mathcal{N}_j(t)\right)\cap\left(\cup_{j\in\Theta_m}\mathcal{N}_j(t)\right) \subset \mathcal{N}_i(t).
\]
The preceding two relations establish the desired equivalence.
\end{proof}

Equipped with the preceding lemma, we can now  show that upon
termination of the summation procedure, each source $i$ and link $l$
have $y_i(S) = z_l(S-1) = (\tilde\lambda(x^k))^2$ [cf.\ Eq.\
(\ref{eq:sum})].

\begin{theorem}\label{thm:sum}
Consider a network and its auxiliary graph with sets
$\{\Theta_l\}_{l\in \mathcal{L}}$. Let $\Omega$ denote the set of
all subsets of $S$ and define the function $\sigma: \Omega\to
\mathbb{R}$ as \[\sigma(K) = \sum_{l\in \cup_{i\in K} L(i)}
z_l^*\sum_{i\in K}I_{\{l\in L(i)\}} + \sum_{i\in K} y_i^*, \] where
$y_i^* = (\Delta \tilde x_{i}^k)^2(H_k)_{ii}$, $z_l^*
=\frac{1}{|S(l)|}(\Delta \tilde x_{l+S}^k)^2(H_k)_{(l+S)(l+S)}$ and
$I_{\{l\in L(i)\}}$ is the indicator function for the event $\{l\in
L(i)\}$. Let $y_i(t)$ and $z_l(t)$ be the iterates generated by the
distributed summation procedure described above. Then for all $t \in
\{1, \ldots, S\}$, the value $z_l(t)$  at each link satisfies
\be\label{eq:tHopL} z_l(t) =
\sigma(\cup_{i\in\Theta_l}\mathcal{N}_i(t-1)), \ee and the value
$y_i(t)$ at each source node satisfies \be\label{eq:tHopS} y_i(t) =
\sigma(\mathcal{N}_i(t)).\ee
\end{theorem}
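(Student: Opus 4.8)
The plan is to prove the two identities \eqref{eq:tHopL} and \eqref{eq:tHopS} simultaneously by induction on $t$, using the neighborhood-expansion identities from Lemmas \ref{lemma:nbd} and \ref{lemma:nbdS} to control the inclusion--exclusion bookkeeping. The key observation is that $\sigma$ is an \emph{additive-with-overlap-correction} set function: for two subsets $K_1, K_2 \subseteq \mathcal{S}$ one has $\sigma(K_1\cup K_2) = \sigma(K_1) + \sigma(K_2) - \sigma(K_1\cap K_2)$, since $\sigma$ is just a sum of per-source terms $y_i^*$ and per-link terms $z_l^*$ counted with multiplicity equal to the number of sources of $K$ using that link; both the source-sum and the link-multiplicity-sum are genuinely additive set functions, so their overlap corrections cancel exactly. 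More generally, for a family $K_1,\dots,K_r$ whose pairwise intersections all coincide with a common set $C$ and whose higher-order intersections also equal $C$ (which is precisely the combinatorial situation created by Lemmas \ref{lemma:nbd} and \ref{lemma:nbdS}), one gets the ``star'' inclusion--exclusion formula $\sigma\!\left(\cup_{s=1}^r K_s\right) = \sum_{s=1}^r \sigma(K_s) - (r-1)\sigma(C)$.

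For the base case $t=1$: each source has $y_i(0) = \sigma(\{i\}) = \sigma(\mathcal{N}_i(0))$ by the initialization \eqref{eq:y0} and the definition of $\sigma$ on a singleton. Then $z_l(1) = \sum_{i\in\Theta_l} y_i(0) - (|\Theta_l|-1)z_l(1-1)$; since $z_l(0)=0$, this is $\sum_{i\in\Theta_l}\sigma(\{i\})$, and because the singletons $\{i\}$ for distinct $i\in\Theta_l$ are pairwise disjoint, this equals $\sigma(\cup_{i\in\Theta_l}\{i\}) = \sigma(\cup_{i\in\Theta_l}\mathcal{N}_i(0))$, which is \eqref{eq:tHopL} for $t=1$. (One must double-check the corner case where $z_l(0)=0$ is \emph{not} of the form $\sigma(\emptyset)$-consistent with the recursion; it works because the $-(|\Theta_l|-1)z_l(0)$ term vanishes, and $\sigma$ restricted to a disjoint union needs no correction.) Then \eqref{eq:yupdate} gives $y_i(1) = \sum_{l\in L^*(i)} z_l(1) - (|L^*(i)|-1)y_i(0)$; substituting $z_l(1)=\sigma(\cup_{j\in\Theta_l}\mathcal{N}_j(0)) = \sigma(\cup_{j\in\Theta_l}\{j\})$ and $y_i(0)=\sigma(\{i\})$, and noting that for $l,m\in L^*(i)$ the intersection $\left(\cup_{j\in\Theta_l}\{j\}\right)\cap\left(\cup_{j\in\Theta_m}\{j\}\right)=\{i\}$ by part (2) of Lemma \ref{lemma:nocycle}, the ``star'' formula yields $y_i(1) = \sigma\!\left(\cup_{l\in L^*(i)}\cup_{j\in\Theta_l}\{j\}\right) = \sigma(\mathcal{N}_i(1))$, the last equality because $\mathcal{N}_i(1)$ is exactly $\{i\}$ together with all neighbors of $i$, i.e.\ $\cup_{l\in L^*(i)}\Theta_l$ by part (2) of Lemma \ref{lemma:nocycle}.

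The inductive step repeats this structure verbatim: assuming \eqref{eq:tHopS} holds at step $t-1$ for all sources (so $y_i(t-1)=\sigma(\mathcal{N}_i(t-1))$) and \eqref{eq:tHopL} at step $t-1$ for all links, I plug into \eqref{eq:zupdate} and use Lemma \ref{lemma:nbd}, which says that for distinct $i,j\in\Theta_l$ the sets $\mathcal{N}_i(t-1)$ and $\mathcal{N}_j(t-1)$ intersect exactly in $\cup_{m\in\Theta_l}\mathcal{N}_m(t-2)$; since that common intersection is independent of the chosen pair, and since $z_l(t-1)=\sigma(\cup_{m\in\Theta_l}\mathcal{N}_m(t-2))$ by the inductive hypothesis, the correction term $-(|\Theta_l|-1)z_l(t-1)$ in \eqref{eq:zupdate} is precisely the $-(r-1)\sigma(C)$ term of the star formula, giving $z_l(t)=\sigma(\cup_{i\in\Theta_l}\mathcal{N}_i(t-1))$. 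Symmetrically, \eqref{eq:yupdate} together with Lemma \ref{lemma:nbdS} (the analogous statement for links in $L^*(i)$, whose pairwise intersections all equal $\mathcal{N}_i(t-1)$) and the inductive value of $y_i(t-1)$ gives $y_i(t)=\sigma(\mathcal{N}_i(t))$, where $\mathcal{N}_i(t) = \cup_{l\in L^*(i)}\cup_{j\in\Theta_l}\mathcal{N}_j(t-1)$ since a $t$-hop neighbor of $i$ is a $(t-1)$-hop neighbor of some $1$-hop neighbor $j$ of $i$, and every $1$-hop neighbor lies in some $\Theta_l$ with $l\in L^*(i)$.

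The main obstacle I anticipate is establishing the generalized ``star'' inclusion--exclusion identity $\sigma(\cup_s K_s) = \sum_s \sigma(K_s) - (r-1)\sigma(C)$ cleanly, i.e.\ verifying that when all pairwise intersections equal a common $C$ then all higher intersections also equal $C$ (true here because the $K_s$ are unions of neighborhoods glued at a single node/clique, so any element in two of them lies on a path forcing it into $C$ — this is really where parts (4) and (6) of Lemma \ref{lemma:nocycle}, via Lemmas \ref{lemma:nbd}--\ref{lemma:nbdS}, do the work), and that $\sigma$ being a weighted count of sources-per-link plus a plain sum over sources makes the alternating inclusion--exclusion sum collapse to the two-term form. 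Once that lemma is in hand, the rest is a mechanical substitution into the recursions \eqref{eq:zupdate} and \eqref{eq:yupdate}, so I would isolate it as a short standalone combinatorial claim before running the induction.
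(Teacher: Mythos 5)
Your proposal is correct and follows essentially the same route as the paper: induction on $t$, substituting the inductive hypotheses into the recursions \req{eq:zupdate} and \req{eq:yupdate}, and invoking Lemmas \ref{lemma:nbd} and \ref{lemma:nbdS} together with inclusion--exclusion to collapse the overlapping sums. Your isolation of the ``star'' inclusion--exclusion identity (which holds because $\sigma$ is modular, i.e.\ $\sigma(K_1\cup K_2)+\sigma(K_1\cap K_2)=\sigma(K_1)+\sigma(K_2)$, and $C\subseteq K_s$ for all $s$ makes all higher-order intersections trivially equal $C$) is simply a clean statement of what the paper applies implicitly, and the concern you raise in the last paragraph is in fact immediate.
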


\begin{proof}
We use induction to prove the theorem.
\\ \textbf{Base case: $t=1$.\\}
Since $z_l(0)=0$ for all links, Eq.\ (\ref{eq:zupdate}) for $t=1$ is
\[z_l(1) = \sum_{i\in \Theta_l} y_i(0) = \sum_{i\in \Theta_l}( y_i^*+\sum_{l\in L(i)}z^*_l) = \sigma(\Theta_l),\]
where we use the definition of $y(0)$ [cf.\ Eq.\ (\ref{eq:y0})] and the
 function $\sigma(\cdot)$. Since $\mathcal{N}_i(0)=i$, the above relation implies Eq.\ (\ref{eq:tHopL}) holds.

For source $i$, from update relation (\ref{eq:yupdate}), we have
\[y_i(1) = \sum_{l\in L^*(i)} \sigma(\Theta_l) -\left(|L^*(i)|-1\right) y_i(0).\] Lemma \ref{lemma:nbdS} and inclusion-exclusion principle imply \[\sum_{l\in L^*(i)} \sigma(\Theta_l) = \sigma(\cup_{l\in L^*(i)} \Theta_l) + \left(|L^*(i)|-1\right) \sigma(i).\]
Since $y_i(0) = \sigma(i)$ based on the definition of $y_i(0)$ [cf.\ Eq.\ (\ref{eq:y0})], by rearranging the preceding two relations, we obtain
\[y_i(1) =  \sigma(\cup_{l\in L^*(i)} \Theta_l) = \sigma(\mathcal{N}_i(1)),\]
which shows Eq.\ (\ref{eq:tHopS}) holds for $t=1$.

\noindent\textbf{Inductive step for $t=T\geq 2$.\\} Assume for
$t=T-1$, Eqs.\ (\ref{eq:tHopS}) and (\ref{eq:tHopL}) hold, we first
show that Eq.\ (\ref{eq:tHopL}) hold. When $t=T$, by update equation
(\ref{eq:zupdate}), we obtain for link $l$
\begin{align*}z_l(T) =& \sum_{i\in \Theta_l} y_i(T-1) - \left(|\Theta_l|-1\right)z_l(T-1)\\
 =& \sum_{i\in \Theta_l}\sigma(\mathcal{N}_i(T-1)) -\left(|\Theta_l|-1\right) z_l(T-1), \end{align*}
where the second equality follows from Eq.\ (\ref{eq:tHopS}) for $t=T-1$.

If $|\Theta_l|=1$, then we have $z_l(T) =\sigma(\mathcal{N}_i(T-1))$, for $i\in\Theta_l$, therefore Eq.\ (\ref{eq:tHopL}) is satisfied.

For $|\Theta_l|>1$, using Lemma \ref{lemma:nbd} for $t=T$ and by inclusion-exclusion principle, we obtain
\[ \sum_{i\in \Theta_l}\sigma(\mathcal{N}_i(T-1)) = \sigma\left(\cup_{i\in\Theta_l}\mathcal{N}_i(T-1)\right)+ \left(|\Theta_l|-1\right) \sigma(\cup_{m\in \Theta_l}\mathcal{N}_m(T-2)).
\]
Eq.\ (\ref{eq:tHopL}) for $t=T-1$ yields $z_l(T-1)=\sigma(\cup_{m\in
\Theta_l}\mathcal{N}_m(T-2))$. By using this fact and rearranging
the preceding two relations, we have Eq.\ (\ref{eq:tHopL}) holds for
$t=T$, i.e., \[z_l(T) =
\sigma\left(\cup_{i\in\Theta_l}\mathcal{N}_i(T-1)\right).\]

We next establish Eq.\ (\ref{eq:tHopS}). From update equation
(\ref{eq:yupdate}), using the preceding relation, we have
\begin{align*}
y_i(T) =& \sum_{l\in L^*(i)} z_l(T) -\left(|L^*(i)|-1\right) y_i(T-1)\\
=& \sum_{l\in L^*(i)} \sigma\left(\cup_{i\in\Theta_l}\mathcal{N}_i(T-1)\right) -\left(|L^*(i)|-1\right) y_i(T-1).
\end{align*}

Lemma \ref{lemma:nbdS} and inclusion-exclusion principle imply
\[\sum_{l\in L^*(i)} \sigma\left(\cup_{i\in\Theta_l}\mathcal{N}_i(T-1)\right) = \sigma(\cup_{l\in L^*(i)} \cup_{i\in\Theta_l}\mathcal{N}_i(T-1)) + \left(|L^*(i)|-1\right) \sigma(\mathcal{N}_i(T-1)).\]
By definition of $\mathcal{N}_i(\cdot)$, we have $\cup_{l\in L^*(i)}
\cup_{i\in\Theta_l}\mathcal{N}_i(T-1) = \mathcal{N}_i(T)$. By using
Eq.\ (\ref{eq:tHopS}) for $t=T-1$, i.e.,
$y_i(T-1)=\sigma(\mathcal{N}_i(T-1))$ and rearranging the above two
equations, we obtain
\[ y(T) = \sigma(\mathcal{N}_i(T)),
\]
which completes the inductive step.\end{proof}

Using definition of the function $\sigma(\cdot)$, we have
$(\tilde\lambda(x^k))^2 = \sigma(\mathcal{S})$. By the above
theorem, we conclude that after $S$ iterations, \[y_i(S) =
\sigma(\mathcal{N}_i(S))=\sigma(\mathcal{S})=(\tilde\lambda(x^k))^2.\]
By observing that $\cup_{i\in\Theta_l}\mathcal{N}_i(S-1) =
\mathcal{S}$, we also have \[z_i(S-1) =
\sigma\left(\cup_{i\in\Theta_l}\mathcal{N}_i(S-1)\right) =
\sigma(\mathcal{S}))=(\tilde\lambda(x^k))^2,\] where we used part
(5) of Lemma \ref{lemma:nocycle}. This shows that the value
$\tilde\lambda (x^k)^2$ is available to all sources and links after
$S-1$ iterations.

Note that the number $S$ is an upper bound on the number of
iterations required in the distributed summation process to obtain
the correct value at the links and sources in the original graph. If
the value of the diameter of the auxiliary graph (or an upper bound
on it) is known, then the process would terminate in number of
steps equal to this value plus 1. For instance, when all the sources share
one common link, then the auxiliary graph is a complete graph, and only
$2$ iterations is required. On the other hand, when the auxiliary
graph is a line, the summation procedure would take $S$ iterations.

We finally contrast our distributed summation procedure with
spanning tree computations, which were used widely in 1970s and
1980s for performing information exchange among different processors
in network flow problems. In spanning tree based approaches,
information from all processors is passed along the edges of a
spanning tree, and stored at and broadcast by a designated central
root node (see \cite{Klincewicz83} and \cite{spanningTree}). In
contrast, our summation procedure involves (scalar) information
aggregated along the routes and fed back independently to different
sources, which is a more natural exchange mechanism in an
environment with decentralized sources. Moreover, processors in the
system (i.e., sources and links) do not need to maintain
predecessor/successor information (as required by spanning tree
methods). The only network-related information is the sets
$\theta_l$ for $l\in \mathcal{L}$ kept at the individual links and
obtained from the auxiliary graph, which is itself constructed using
the feedback mechanism described above.

\section{Distributed Error Checking}\label{app:errorbounds}

In this section, we present a distributed error checking method to
determine when to terminate the dual computation procedure to meet
the error tolerance level in Assumption \ref{ass:errorBoundEps} at a
primal iteration $k$. The method involves two stages: in the first
stage, the links and sources execute a predetermined number of dual
iterations. In the second stage, if  the error tolerance level is
not satisfied in the previous stage, the links and sources implement
dual iterations until some distributed termination criteria is met.
For the rest of this section we suppress the dependence of the dual
vector on the primal iteration index $k$ for notational convenience
and we adopt the following assumption on the information available
to each node and link.

\begin{assumption}\label{ass:spectralGap}
There exists a positive scalar $F<1$ such that the spectral radius
of the matrix $M=(D_k+\bar{B}_k)^{-1}(\bar{B}_k-B_k)$ satisfies
$\rho(M)\leq F$. Each source and link knows the scalar $F$ and the
total number of sources and links in the graph, denoted by $S$ and
$L$ respectively.
\end{assumption}

As noted in Section \ref{sec:dual}, the matrix
$M=(D_k+\bar{B}_k)^{-1}(\bar{B}_k-B_k)$ is the weighted Laplacian
matrix of the dual graph, therefore the bound $F$ can be obtained
once the structure of the dual graph is known \cite{ChungGraphBook},
\cite{graphTheory}. In this assumption, we only require availability
of some aggregate information, and hence the distributed nature of
the algorithm is preserved. Before we introduce the details of the
algorithm, we establish a relation between $\norm{w^*-w(t)}_\infty$
and $\norm{w(t+1)-w(t)}_\infty$, which is a key relation in
developing the distributed error checking method.

\begin{lemma}
Let the matrix $M$ be $M=(D_k+\bar{B}_k)^{-1}(\bar{B}_k-B_k)$. Let
$w(t)$ denote the dual variable generated by iteration
(\ref{eq:dualInnerIteration}), and $w^*$ be the fixed point of the
iteration. Let $F$ and $L$ be the positive scalar defined in
Assumption \ref{ass:spectralGap}. Then the following relation holds,
\be\label{ineq:wNorms}\norm{w^*-w(t)}_\infty\leq\frac{\sqrt
{L}}{1-F}\norm{w(t+1)-w(t)}_\infty.\ee
\end{lemma}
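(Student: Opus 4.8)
The plan is to derive the estimate directly from the affine structure of the dual iteration (\ref{eq:dualInnerIteration}) together with the spectral bound $\rho(M)\le F$. First I would record that the fixed point $w^*$ of (\ref{eq:dualInnerIteration}) satisfies $w^*=Mw^*+z$ and that the iterate satisfies $w(t+1)=Mw(t)+z$, where $z=(D_k+\bar{B}_k)^{-1}(-AH_k^{-1}\nabla f(x^k))$ and $M=(D_k+\bar{B}_k)^{-1}(\bar{B}_k-B_k)$. Subtracting gives the one-step contraction identity $w^*-w(t+1)=M(w^*-w(t))$, while subtracting $w(t)$ instead yields $(I-M)(w^*-w(t))=w(t+1)-w(t)$. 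Since $\rho(M)\le F<1$ by Assumption \ref{ass:spectralGap}, the matrix $I-M$ is invertible ($1$ is not an eigenvalue of $M$), so $w^*-w(t)=(I-M)^{-1}(w(t+1)-w(t))$.

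Next I would run the chain of norm inequalities that produces the constant $\sqrt{L}/(1-F)$. Using $\norm{v}_\infty\le\norm{v}_2$ for $v\in\mathbb{R}^L$, submultiplicativity of the spectral norm, and $\norm{v}_2\le\sqrt{L}\,\norm{v}_\infty$, one gets $\norm{w^*-w(t)}_\infty\le\norm{(I-M)^{-1}}_2\,\norm{w(t+1)-w(t)}_2\le\sqrt{L}\,\norm{(I-M)^{-1}}_2\,\norm{w(t+1)-w(t)}_\infty$. Thus everything reduces to the operator-norm bound $\norm{(I-M)^{-1}}_2\le 1/(1-F)$; equivalently, to showing that $M$ is a contraction with modulus $F$ in the relevant inner product, so that the triangle inequality $\norm{w^*-w(t)}\le\norm{M(w^*-w(t))}+\norm{w(t+1)-w(t)}\le F\norm{w^*-w(t)}+\norm{w(t+1)-w(t)}$ can be rearranged.

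To obtain $\norm{(I-M)^{-1}}_2\le 1/(1-F)$ I would invoke the structural fact established in Section \ref{sec:ConvDual1}: $M$ is similar, via the diagonal congruence $(D_k+\bar{B}_k)^{1/2}$, to the symmetric positive semidefinite matrix $\tilde M=(D_k+\bar{B}_k)^{-1/2}(\bar{B}_k-B_k)(D_k+\bar{B}_k)^{-1/2}$, whose eigenvalues coincide with those of $M$ and therefore lie in $[0,\rho(M)]\subseteq[0,F]$. Consequently the eigenvalues of $(I-M)^{-1}$ lie in $[1,\,1/(1-F)]$; writing $(I-M)^{-1}=\sum_{j\ge0}M^j$ (a convergent Neumann series since $\rho(M)<1$) and estimating through $\tilde M$ yields the desired bound. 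Combining this with the norm chain above completes the argument.

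The main obstacle is precisely this last step: passing from the spectral-radius bound $\rho(M)\le F$ to a bound on $\norm{(I-M)^{-1}}_2$ is not automatic because $M$ need not be normal, so the argument must go through the symmetric representative $\tilde M$ (or, equivalently, work throughout in the weighted inner product induced by $D_k+\bar{B}_k$, in which $M$ is genuinely self-adjoint and contracting). Once that is handled, the remainder is the routine passage between the $\infty$-norm and the $2$-norm on $\mathbb{R}^L$, which is exactly where the factor $\sqrt{L}$ enters.
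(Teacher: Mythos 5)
The overall structure of your argument mirrors the paper's: both subtract the fixed-point equation from the iterate equation to get $w(t+1)-w(t)=(I-M)(w^*-w(t))$, both then reduce the bound to $\sqrt{L}\,\norm{(I-M)^{-1}}_2$ via standard norm equivalences on $\mathbb{R}^L$, and both then need $\norm{(I-M)^{-1}}_2\le 1/(1-F)$. Where you differ from the paper is instructive: the paper asserts ``for the symmetric real matrix $M$ we have $\rho(M)=\norm{M}_2$,'' but $M=(D_k+\bar B_k)^{-1}(\bar B_k-B_k)$ is a diagonal matrix times a symmetric matrix and is not symmetric in general (only similar to the symmetric $\tilde M$). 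You correctly flag this as the crux.

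The gap is that neither of the workarounds you sketch actually closes it. Conjugating the Neumann series gives $(I-M)^{-1}=(D_k+\bar B_k)^{-1/2}(I-\tilde M)^{-1}(D_k+\bar B_k)^{1/2}$, so the submultiplicative estimate of $\norm{(I-M)^{-1}}_2$ picks up the extra factor $\norm{(D_k+\bar B_k)^{-1/2}}_2\norm{(D_k+\bar B_k)^{1/2}}_2$, i.e.\ the square root of the condition number of $D_k+\bar B_k$, which Assumption \ref{ass:spectralGap} does not control. The ``weighted inner product'' route has the same issue in disguise: $M$ is indeed a contraction with modulus $F$ in the $(D_k+\bar B_k)$-weighted $2$-norm, but translating that weighted norm back to $\norm{\cdot}_\infty$ again costs the same condition number, not just $\sqrt{L}$. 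Eigenvalues of $(I-M)^{-1}$ lying in $[1,1/(1-F)]$ bounds the spectral radius, not the operator $2$-norm, for a non-normal matrix. Concretely, with $L=2$, one source using both links, $D_k+\bar B_k=\mathrm{diag}(1,100)$ and $\bar B_k-B_k=a\bigl(\begin{smallmatrix}1&-1\\-1&1\end{smallmatrix}\bigr)$ with $a\approx 0.495$ gives $\rho(M)=0.5$ yet $\norm{(I-M)^{-1}}_\infty\approx 2.98>2\sqrt{2}=\sqrt{L}/(1-F)$, so the chain as you (and the paper) set it up cannot yield the stated constant without an additional hypothesis bounding the diagonal scaling. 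To make the argument rigorous one would need either to strengthen Assumption \ref{ass:spectralGap} to a bound on $\norm{M}_2$ (or on $\norm{\tilde M}_2$ together with the condition number of $D_k+\bar B_k$), or to accept a condition-number-dependent constant in the lemma.
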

\begin{proof}

Iteration (\ref{eq:dualInnerIteration}) implies that the fixed point $w^*$ satisfies the following relation,
\[ w^*=Mw^*+(D_k+\bar{B}_k)^{-1}(-AH_k^{-1}\nabla f(x^k)),\]
and the iterates $w(t)$ satisfy,
\[ w(t+1)=Mw(t)+(D_k+\bar{B}_k)^{-1}(-AH_k^{-1}\nabla f(x^k)).\]
By combining the above two relations, we obtain,
\[w(t+1)-w(t) = (I-M)(w^*-w(t)).\]
Hence, by the definition of matrix infinity norm, we have
\[\norm{w^*-w(t)}_\infty\leq\norm{(1-M)^{-1}}_\infty\norm{w(t+1)-w(t)}_\infty.\]
Using norm equivalence for finite dimensional Euclidean space and theories of linear algebra, we obtain $\norm{(I-M)^{-1}}_\infty\leq \sqrt{L}\norm{(I-M)^{-1}}_2\leq \frac{\sqrt L}{1-\norm{M}_2}$ \cite{Horn}, \cite{Berman}. For the symmetric real matrix $M$ we have $\rho(M)=\norm{M}_2$, and hence we obtain the desired relation.
\end{proof}

We next use the above lemma to develop two theorems each of which
serves as a starting point for one of the stages of the distributed
error checking method.

\begin{theorem}\label{thm:1ndCheck}
Let $\{x^k\}$ be the primal sequence generated by the inexact Newton
method (\ref{eq:inexaAlgo}) and $H_k$ be the corresponding Hessian
matrix at the $k^{th}$ iteration. Let $w(t)$ be the inexact dual
variable obtained after $t$ dual iterations
(\ref{eq:dualInnerIteration}) and $w^*$ be the exact solution to
(\ref{eq:dualUpdateBackground}), i.e. the limit of the sequence
$\{w(t)\}$ as $t\to\infty$. Let vectors $\Delta x^k$ and
$\Delta\tilde x^k$ be the exact and inexact Newton directions
obtained using $w^*$ and $w(t)$ [cf.\ Eqs.\
(\ref{eq:ds})-(\ref{eq:dxPrimal})], and vector $\gamma^k$ be the
error in the Newton direction computation at $x^k$ , defined by
$\gamma^k = \Delta x^k - \Delta \tilde x^k$. For some positive
scalar $p$, let \be\label{eq:1stErrorS}\rho_i =
\left|\frac{\sqrt{L}(H_k^{-1})_{ii}|L(i)|\norm{w(t+1)-w(t)}_\infty}{(1-F)(H_k^{-1})_{ii}[R']^iw(t)}
\right|\ee for each source $i$, and
\be\label{eq:1stErrorL}\rho_l=\left|\frac{\sqrt{L}\sum_{i\in S(l)}
(H_k^{-1})_{ii}|L(i)|\norm{w(t+1)-w(t)}_\infty}{(1-F)\sum_{i\in
S(l)}(H_k^{-1})_{ii}[R']^iw(t)}\right|\ee for each link $l$. Define
a nonnegative scalar $\beta^k$ as \be\label{eq:1stb}\beta^k =
\left(\max\left\{\max_{i\in \mathcal{S}}\frac{\rho_i}{p} ,
\max_{l\in \mathcal{L}} \frac{\rho_l}{p} \right\}\right)^{-2}.\ee
Then we have \be\label{ineq:pbeta} \beta^k(\gamma^k)'H_k\gamma^k\leq
p^2 (\Delta \tilde x^k)'H_k(\Delta \tilde x^k). \ee
\end{theorem}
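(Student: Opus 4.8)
The plan is to establish (\ref{ineq:pbeta}) by proving a stronger \emph{componentwise} bound and then summing, exploiting that $H_k$ is a diagonal matrix with strictly positive entries [cf.\ Eq.\ (\ref{eq:Hdiag}) and Assumption \ref{asmp:utility}], so that $(\gamma^k)'H_k\gamma^k=\sum_i(\gamma^k_i)^2(H_k)_{ii}$ and $(\Delta\tilde x^k)'H_k(\Delta\tilde x^k)=\sum_i(\Delta\tilde x^k_i)^2(H_k)_{ii}$, the sums running over all sources $i\in\mathcal S$ and slack indices $S+l$, $l\in\mathcal L$.

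First I would write the Newton direction error $\gamma^k$ explicitly in terms of $w^*-w(t)$. Since the source components of the exact and inexact Newton directions are obtained from Eq.\ (\ref{eq:ds}) evaluated at $w^*$ and at $w(t)$ respectively, for each $i\in\mathcal S$ we have $\gamma^k_i=-(H_k)^{-1}_{ii}[R']^i(w^*-w(t))$; and because both directions satisfy $A\Delta x=0$ by the two-stage construction in Eq.\ (\ref{eq:dxPrimal}), the slack components satisfy $\gamma^k_{S+l}=-\sum_{i\in S(l)}\gamma^k_i$ for each link $l$. Next I would invoke relation (\ref{ineq:wNorms}) from the preceding lemma, $\norm{w^*-w(t)}_\infty\le\frac{\sqrt L}{1-F}\norm{w(t+1)-w(t)}_\infty$, together with the elementary estimate $|[R']^iv|\le|L(i)|\norm{v}_\infty$, to get $|\gamma^k_i|\le(H_k^{-1})_{ii}|L(i)|\frac{\sqrt L}{1-F}\norm{w(t+1)-w(t)}_\infty$ and, summing over $i\in S(l)$, the analogous bound on $|\gamma^k_{S+l}|$; these are exactly the numerators of $\rho_i$ and $\rho_l$ in (\ref{eq:1stErrorS})--(\ref{eq:1stErrorL}) divided by $(1-F)$.

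Dividing through by the denominators in those definitions then yields the relative bounds $|\gamma^k_i|\le\rho_i|\Delta\tilde x^k_i|$ and $|\gamma^k_{S+l}|\le\rho_l|\Delta\tilde x^k_{S+l}|$. Finally, the choice of $\beta^k$ in (\ref{eq:1stb}) gives $\sqrt{\beta^k}\le p/\rho_i$ and $\sqrt{\beta^k}\le p/\rho_l$ for every $i$ and $l$, hence $\beta^k(\gamma^k_i)^2\le p^2(\Delta\tilde x^k_i)^2$ and $\beta^k(\gamma^k_{S+l})^2\le p^2(\Delta\tilde x^k_{S+l})^2$; multiplying each by the positive scalar $(H_k)_{ii}$ and summing over all sources and links produces $\beta^k(\gamma^k)'H_k\gamma^k\le p^2(\Delta\tilde x^k)'H_k(\Delta\tilde x^k)$, which is (\ref{ineq:pbeta}).

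The step that requires the most care is the passage from the $\rho_i,\rho_l$ numerator/denominator estimates to the componentwise relative bounds on $\gamma^k$: the source and slack components of $\Delta\tilde x^k$ given by Eq.\ (\ref{eq:ds}) contain the gradient term $\nabla_i f(x^k)$ in addition to the route-price term $[R']^iw(t)$ appearing in the denominators of (\ref{eq:1stErrorS})--(\ref{eq:1stErrorL}), so one must verify carefully the intended normalization (or absorb the gradient contribution appropriately). One must also handle the degenerate case in which a denominator vanishes: there the corresponding $\rho$ is infinite, $\beta^k=0$ by (\ref{eq:1stb}), and (\ref{ineq:pbeta}) holds trivially because its left-hand side is zero.
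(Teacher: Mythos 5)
Your proof takes essentially the same approach as the paper: derive componentwise relative bounds $|\gamma^k_i|\le\rho_i|\Delta\tilde x^k_i|$ and $|\gamma^k_{S+l}|\le\rho_l|\Delta\tilde x^k_{S+l}|$ from the infinity-norm estimate (\ref{ineq:wNorms}), then use the max-defining property of $\beta^k$ together with the diagonality of $H_k$ to sum and conclude. Your concern about the gradient term $\nabla_i f(x^k)$ being absent from the denominators of (\ref{eq:1stErrorS})--(\ref{eq:1stErrorL}) is well taken --- the paper's own proof makes the same silent identification of $\Delta\tilde s_i$ with $-(H_k^{-1})_{ii}[R']^iw(t)$ in its first equality.
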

\begin{proof}
For notational convenience, we let matrix $P_k$ denote the $S\times S$ principal submatrix of $H_k$, i.e. $(P_k)_{ii} = (H_k)_{ii}$ for $i\leq S$, vector $\Delta \tilde s$ in $\mathbb{R}^S$ denote the first $S$ components of the vector $\Delta \tilde x^k$, vector $\Delta \tilde y_l$ in $\mathbb{R}^L$ denote the last $L$ components of the vector $\Delta \tilde x^k$. Similarly, we denote by $\Delta s$ and $\Delta y$ the first $S$ and last $L$ components of the exact Newton direction $\Delta x$ respectively. From Eq.\ (\ref{eq:ds}), we have for each $i\in S$,
\begin{align*}
\left|\frac{\Delta s_i-\Delta\tilde s_i}{\Delta \tilde s_i} \right| &=\left| \frac{(H_k^{-1})_{ii}[R']^i(w^*-w(t))}{(H_k^{-1})_{ii}[R']^i w(t)} \right|\\ \nonumber
&\leq \left|\frac{(H_k^{-1})_{ii}[R']^ie\norm{w^*-w(t)}_\infty}{(H_k^{-1})_{ii}[R']^i w(t)}  \right|\\ \nonumber
& \leq \left|\frac{\sqrt{L}(H_k^{-1})_{ii}|L(i)|\norm{w(t+1)-w(t)}_\infty}{(1-F)(H_k^{-1})_{ii}[R']^iw(t)}  \right| = \rho_i,
\end{align*}
where the first inequality follows from the element-wise nonnegativity of matrices $H_k$ and $R$, and the second inequality follows from relation (\ref{ineq:wNorms}).

Similarly for each link $l\in L$, by relations (\ref{eq:ds}) and (\ref{eq:dxPrimal}) we obtain
\begin{align*}
\left|\frac{\Delta y_l-\Delta\tilde y_l}{\Delta \tilde y_l}  \right|& =
\left| \frac{[R]^{l}P_k^{-1}R'(w^*-w(t))}{[R]^{l}P_k^{-1}R' w(t)}  \right|\\ \nonumber
&\leq \left|\frac{\sum_{i\in S(l)}(P_k^{-1})_{ii}R' e \norm{w^*-w(t)}_\infty}{\sum_{i\in S(l)}(P_k^{-1})_{ii}[R']^i w(t)}\right|\\ \nonumber
&\leq \left|\frac{\sqrt{L}\sum_{i\in S(l)} (H_k^{-1})_{ii}|L(i)|\norm{w(t+1)-w(t)}_\infty}{(1-F)\sum_{i\in S(l)}(H_k^{-1})_{ii}[R']^iw(t)}\right|  =\rho_l,
\end{align*}
where the first inequality follows from the structure of the matrix $R$ and the element-wise nonnegativity of matrices $H_k$ and $R$, and the second inequality follows from relation (\ref{ineq:wNorms}) and the definition for matrix $P_k$.

The definition for $\beta^k$ [cf.\ Eq.\ (\ref{eq:1stb})] implies that
\[\frac{p}{\sqrt{\beta^k}} = \max\left\{\max_{i\in \mathcal{S}}\rho_i , \max_{l\in \mathcal{L}} \rho_l \right\}.\]
Therefore the preceding relations imply that $\left|\frac{\Delta s_i-\Delta\tilde s_i}{\Delta \tilde s_i} \right|\leq \frac{p}{\sqrt {\beta^k}}$ and $\left|\frac{\Delta y_l-\Delta\tilde y_l}{\Delta \tilde y_l}  \right|\leq \frac{p}{\sqrt {\beta^k}}$, i.e., \[ \sqrt {\beta^k}\left|\gamma^k_i \right|\leq p |{\Delta \tilde x_i}|,\]
which implies the desired relation.

\end{proof}

\begin{theorem}\label{thm:2ndCheck}
Let $\{x^k\}$ be the primal sequence generated by the inexact Newton method (\ref{eq:inexaAlgo}) and $H_k$ be the corresponding Hessian matrix at $k^{th}$ iteration. Let $w(t)$ be the inexact dual variable obtained after $t$ dual iterations (\ref{eq:dualInnerIteration}) and $w^*$ be the exact solution to (\ref{eq:dualUpdateBackground}), i.e. the limit of the sequence $\{w(t)\}$ as $t\to\infty$. Let vectors $\Delta x^k$ and $\Delta\tilde x^k$ be the exact and inexact Newton directions obtained using $w^*$ and $w(t)$ [cf.\ Eqs.\ (\ref{eq:ds})-(\ref{eq:dxPrimal})] respectively, and vector $\gamma^k$ be the error in the Newton direction computation at $x^k$ , defined by $\gamma^k = \Delta x^k - \Delta \tilde x^k$. For some scalar $\beta$ and $\epsilon$ where $0<\beta^k<1$ and $\epsilon>0$, let
\be\label{eq:2ndErrorS}h_i = \sqrt{\frac{\epsilon}{(1-\beta^k)(L+S)L}}\frac{1-F}{|L(i)|(H_k^{-\frac{1}{2}})_{ii}}\ee
for each source $i$, and  \be\label{eq:2ndErrorL}
h_l=\sqrt{\frac{\epsilon}{(1-\beta^k)(L+S)L}}\frac{1-F}{(H_k^{\frac{1}{2}})_{(S+l)(S+l)} \sum_{i\in S(L)} |L(i)|(H_k)_{ii}^{-1}}\ee
for each link $l$. Define a nonnegative scalar $h$ as
\be\label{eq:2ndh}h = \left(\min\left\{\min_{i\in \mathcal{S}} h_i, \min_{l\in \mathcal{L}} h_l\right\}\right).\ee
Then the condition \be\label{ineq:terminate}\norm{w(t+1)-w(t)}_\infty\leq h\ee implies
\be\label{ineq:epsi}
(\gamma^k)'H_k\gamma^k \leq \frac{\epsilon}{1-\beta^k}.
\ee
\end{theorem}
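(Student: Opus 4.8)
The plan is to mirror the argument used in the proof of Theorem \ref{thm:1ndCheck}, but now controlling the \emph{absolute} error in each coordinate of $\gamma^k$ rather than its relative error, so that the weighted quantity $(\gamma^k)'H_k\gamma^k$ can be bounded coordinate by coordinate. The entry point is the inequality $\norm{w^*-w(t)}_\infty\leq\frac{\sqrt{L}}{1-F}\norm{w(t+1)-w(t)}_\infty$ from relation (\ref{ineq:wNorms}); together with the hypothesis (\ref{ineq:terminate}), i.e.\ $\norm{w(t+1)-w(t)}_\infty\leq h$, this yields $\norm{w^*-w(t)}_\infty\leq\frac{\sqrt{L}}{1-F}h$, which I will feed into the per-coordinate estimates below.

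First I would treat the source coordinates $i\in\mathcal{S}$. From Eq.\ (\ref{eq:ds}) one has $\gamma^k_i = \Delta s_i-\Delta\tilde s_i = -(H_k^{-1})_{ii}[R']^i(w^*-w(t))$, hence, since $H_k$ is diagonal, $(\gamma^k_i)^2(H_k)_{ii} = (H_k^{-1})_{ii}\big([R']^i(w^*-w(t))\big)^2$. Because $[R']^i$ is a $0$--$1$ row vector with exactly $|L(i)|$ ones, $\big|[R']^i(w^*-w(t))\big|\leq |L(i)|\,\norm{w^*-w(t)}_\infty$, so $(\gamma^k_i)^2(H_k)_{ii}\leq (H_k^{-1})_{ii}|L(i)|^2\norm{w^*-w(t)}_\infty^2$. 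Since $h\leq h_i$ by the definition (\ref{eq:2ndh}) of $h$, I would substitute $\norm{w^*-w(t)}_\infty\leq\frac{\sqrt{L}}{1-F}h_i$ and then the definition (\ref{eq:2ndErrorS}) of $h_i$; the factors $\sqrt{L}$, $1-F$, $|L(i)|$ and $(H_k^{-1/2})_{ii}$ cancel (using $(H_k^{-1/2})_{ii}^2=(H_k^{-1})_{ii}$), leaving $(\gamma^k_i)^2(H_k)_{ii}\leq \frac{\epsilon}{(1-\beta^k)(L+S)}$.

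The link coordinates $l\in\mathcal{L}$ are handled in the same way, the only additional point being the feasibility-correcting second stage of the inexact direction computation. Writing $P_k$ for the $S\times S$ principal submatrix of $H_k$ and using $\Delta\tilde x^k=(\Delta\tilde s^k,\,-R\Delta\tilde s^k)'$ from Eq.\ (\ref{eq:dxPrimal}) together with the analogous expression for the exact direction, I get $\gamma^k_{S+l} = [R]^l P_k^{-1}R'(w^*-w(t)) = \sum_{i\in S(l)}(H_k^{-1})_{ii}[R']^i(w^*-w(t))$. Bounding each term by $|L(i)|\,\norm{w^*-w(t)}_\infty$ gives $(\gamma^k_{S+l})^2(H_k)_{(S+l)(S+l)}\leq (H_k)_{(S+l)(S+l)}\big(\sum_{i\in S(l)}(H_k^{-1})_{ii}|L(i)|\big)^2\norm{w^*-w(t)}_\infty^2$, and inserting $\norm{w^*-w(t)}_\infty\leq\frac{\sqrt{L}}{1-F}h\leq\frac{\sqrt{L}}{1-F}h_l$ with the definition (\ref{eq:2ndErrorL}) of $h_l$ again cancels all factors (now using $(H_k^{1/2})_{(S+l)(S+l)}^2=(H_k)_{(S+l)(S+l)}$), producing $(\gamma^k_{S+l})^2(H_k)_{(S+l)(S+l)}\leq\frac{\epsilon}{(1-\beta^k)(L+S)}$.

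Finally, since $H_k$ is diagonal, $(\gamma^k)'H_k\gamma^k = \sum_{i=1}^{S+L}(\gamma^k_i)^2(H_k)_{ii}$ is a sum of the $L+S$ coordinate bounds just established, so $(\gamma^k)'H_k\gamma^k\leq (L+S)\cdot\frac{\epsilon}{(1-\beta^k)(L+S)} = \frac{\epsilon}{1-\beta^k}$, which is exactly relation (\ref{ineq:epsi}). The main obstacle I anticipate is not conceptual but bookkeeping: one must carefully derive the expression for $\gamma^k_{S+l}$ from the two-stage inexact direction, and then verify that the somewhat intricate definitions of $h_i$ and $h_l$ make the $\sqrt{L}$, $(1-F)$, $(L+S)$ and Hessian-weight factors cancel precisely so that each coordinate contributes at most $\epsilon/((1-\beta^k)(L+S))$; beyond that the proof is a routine consequence of the norm-equivalence lemma (\ref{ineq:wNorms}) and the element-wise nonnegativity of $R$ and $H_k^{-1}$.
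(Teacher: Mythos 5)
Your proposal is correct and follows essentially the same route as the paper's own proof: use (\ref{ineq:wNorms}) together with $\norm{w(t+1)-w(t)}_\infty\leq h\leq h_i$ (resp.\ $h_l$) to obtain a per-coordinate bound of $\tfrac{\epsilon}{(1-\beta^k)(L+S)}$ on each of the $L+S$ terms of the diagonal quadratic form $(\gamma^k)'H_k\gamma^k$, then sum. The only cosmetic difference is that you write the source and link contributions as $(\gamma^k_i)^2(H_k)_{ii}$ while the paper carries around $(H_k^{\pm 1/2})_{ii}[\cdot]$; the cancellations and the role of element-wise nonnegativity of $R$ and $H_k^{-1}$ are identical.
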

\begin{proof}
We let matrix $P_k$ denote the $S\times S$ principal submatrix of $H_k$, i.e. $(P_k)_{ii} = (H_k)_{ii}$ for $i\leq S$, for notational convenience.
The definition of $h$ [cf.\ Eq.\ (\ref{eq:2ndh})] and relation (\ref{ineq:terminate}) implies
\[
\norm{w(t+1)-w(t)}_\infty\leq h_i, \in \mathcal{S},
\]
and
\[
\norm{w(t+1)-w(t)}_\infty\leq h_l, \mbox{\quad for l }\in \mathcal{L},
\]
Using relation (\ref{ineq:wNorms}) and the definition of $h_i$ and
$h_l$ [cf.\ Eqs.\ (\ref{eq:2ndErrorS}) and (\ref{eq:2ndErrorL})],
the above two relations implies respectively that
\be\label{ineq:sourceError} \norm{w^*-w(t)}_\infty\leq
\sqrt{\frac{\epsilon}{(1-\beta^k)(L+S)}}\frac{1}{|L(i)|(H_k^{-\frac{1}{2}})_{ii}},
\mbox{\quad for i } \in \mathcal{S}, \ee and
\be\label{ineq:linkError} \norm{w^*-w(t)}_\infty\leq
\sqrt{\frac{\epsilon}{(1-\beta^k)(L+S)}}\frac{1}{(H_k^{\frac{1}{2}})_{(S+l)(S+l)}\sum_{i\in
S(L)} |L(i)|(H_k)_{ii}^{-1}}, \mbox{\quad for l }\in \mathcal{L}.
\ee By using the element-wise nonnegativity of matrices $H$ and $A$,
we have for each source $i$,
\[ \left|(H_k^{-\frac{1}{2}})_{ii}[R']^i(w^*-w(t))\right|\leq (H_k^{-\frac{1}{2}})_{ii}[R']^i e
\norm{w^*-w(t)}_\infty = |L(i)|(H_k^{-\frac{1}{2}})_{ii}\norm{w^*-w(t)}_\infty,\]
where the last equality follows from the fact that $[R']^i e = |L(i)|$ for each source $i$.

The above inequality and relation (\ref{ineq:sourceError}) imply
\be\label{ineq:boundS}
\left|(H_k^{-\frac{1}{2}})_{ii}[R']^i(w^*-w(t))\right|\leq
\sqrt{\frac{\epsilon}{(1-\beta^k)(L+S)}}.\ee By the definition of
matrices $P_k$ and $R$, we have for each link $l$,
\begin{align*}
\left|\right. (H_k^{\frac{1}{2}})_{(S+l)(S+l)}\left(\right.RP_k^{-1}R'(w^*-&w(t))\left.\right)^{l}\left.\right| \leq (H_k^{\frac{1}{2}})_{(S+l)(S+l)}[R]^{l}P_k^{-1}R' e \norm{w^*-w(t)}_\infty\\
&=(H_k^{\frac{1}{2}})_{(S+l)(S+l)}\sum_{i\in S(l)} |L(i)|(H_k^{-1})_{ii} \norm{w^*-w(t)}_\infty.
\end{align*}
When combined with relation (\ref{ineq:linkError}), the preceding
relation yields \be\label{ineq:boundL} \left|
(H_k^{\frac{1}{2}})_{(S+l)(S+l)}\left(R
P_k^{-1}R'(w^*-w(t))\right)^{l}\right|\leq
\sqrt{\frac{\epsilon}{(1-\beta^k)(L+S)}}. \ee

From Eqs.\ (\ref{eq:ds})-(\ref{eq:dxPrimal}) and the definition of $\gamma$, we have
\[\gamma^k_i = -\left(\begin{array}{c}P_k^{-1}R'(w^*-w(t))\\RP_k^{-1}R'(w^*-w(t))\end{array}\right),\]
which implies that
\als{(\gamma^k)'H_k\gamma^k&= \sum_{i\in \mathcal{S}}\left((H_k^{-\frac{1}{2}})_{ii}[R']^i(w^*-w(t))\right)^2 + \sum_{l\in\mathcal{L}}\left( (H_k^{\frac{1}{2}})_{(S+l)(S+l)}\left(R P_k^{-1}R'(w^*-w(t))\right)^{l}\right)^2\\&\leq \frac{\epsilon}{1-\beta^k},}
where the inequality follows from (\ref{ineq:boundS}), (\ref{ineq:boundL}), which establishes the desired relation.

\end{proof}

We develop the distributed error checking method based on the
preceding two theorems:
\begin{itemize}
\item{Stage 1:} The links and sources implement $T$ iterations of (\ref{eq:dualInnerIteration}),
where $T$ is a predetermined globally known constant. The links and
sources then use Theorem \ref{thm:1ndCheck} with $t=T-1$ and $p$ as
the desired relative error tolerance level defined in Assumption
\ref{ass:errorBoundEps} to obtain a value $\beta^k$. If $\beta^k\geq
1$, then the dual iteration terminates.

\item{Stage 2:} The links and sources use Theorem \ref{thm:2ndCheck} with
$\beta^k$ obtained in the first stage and $\epsilon$ defined in
Assumption \ref{ass:errorBoundEps} to obtain value $h$. Then they
perform more iterations of the form (\ref{eq:dualInnerIteration})
until the criterion (\ref{ineq:terminate}) is
satisfied.\footnote{The error tolerance level will terminate after
finite number of iterations for any $h>0$, due to the convergence of
the sequence $w(t)$ established in Section \ref{sec:ConvDual1}.}
\end{itemize}

Stage 1 corresponds to checking the term $ p^2 (\Delta \tilde x^k)'H_k(\Delta \tilde x^k)$, while Stage 2 corresponds to the term $\epsilon$ in the error tolerance level. If the method terminates the dual iterations in Stage
1, then Assumption \ref{ass:errorBoundEps} is satisfied for any
$\epsilon>0$; otherwise, by combining relations (\ref{ineq:pbeta})
and (\ref{ineq:epsi}), we have
\[
(\gamma^k)'H_k\gamma^k= (\beta^k+(1-\beta^k))(\gamma^k)'H_k\gamma^k\leq p^2 (\Delta \tilde x^k)'H_k(\Delta \tilde x^k)+\epsilon,
\]
which shows that the error tolerance level in Assumption
\ref{ass:errorBoundEps} is satisfied.

To show that the above method can be implemented in a distributed way, we first rewrite the terms $\rho_i$, $\rho_l$, $h_i$ and $h_l$ and analyze the information required to compute them in a decentralized way. We use the definition of the weighted price of the route $\Pi_i(t)$ and obtain $\Pi_i(t) = (H_k^{-1})_{ii}\sum_{l\in L(i)}w(t)= (H_k^{-1})_{ii}[R']^iw(t)$ and $\Pi_i(0) = (H_k^{-1})_{ii}|L(i)|$, where $w_l(0)=1$ for all links $l$. Therefore relations (\ref{eq:1stErrorS}) and (\ref{eq:1stErrorL}) can be rewritten as
\[\rho_i =  \left|\frac{\sqrt{L}\Pi_i(0)\norm{w(t+1)-w(t)}_\infty}{(1-F)\Pi_i(t)}  \right|,\]
\[\rho_l=\left|\frac{\sqrt{L}\sum_{i\in S(l)} \Pi_i(0)\norm{w(t+1)-w(t)}_\infty}{(1-F)\sum_{i\in S(l)}\Pi_i(t)}\right|.\]

Similarly, relations (\ref{eq:2ndErrorS}) and (\ref{eq:2ndErrorL}) can be transformed into
\[h_i = \sqrt{\frac{\epsilon}{(1-\beta^k)(L+S)L}}\frac{1-F}{\pi_i(0)(H_k^{-\frac{1}{2}})_{ii}},\]
\[h_l=\sqrt{\frac{\epsilon}{(1-\beta^k)(L+S)L}}\frac{1-F}{(H_k^{\frac{1}{2}})_{(S+l)(S+l)} \sum_{i\in S(L)} \Pi_i(0)}.\]
In our dual variable computation procedure, the values $\pi_i(0)$,
$\Pi_i(0)$ and $\Pi_i(t)$ are made available to all the links source
$i$ traverses through the feedback mechanism described in Section
\ref{sec:dual}. Each
source and node knows its local Hessian, i.e., $(H_k)_{ii}$ for
source $i$ and $(H_k)_{(S+l)(S+l)}$ for link $l$. The value
$\beta^k$ is available from the previous stage. Therefore in the
above four expressions, the only not immediately available
information is  $\norm{w(t+1)-w(t)}_\infty$, which can be obtained
using a maximum consensus algorithm.\footnote{In a maximum consensus
algorithm, each node starts with some state and updates its current
state with the maximum state value in its neighborhood (including
itself). Therefore after one round of algorithm, the neighborhood of
the node with maximal value has now the maximum value, after the
diameter of the graph rounds of algorithm, the entire graph reaches
a consensus on the maximum state value and the algorithm
terminates.} Based on these four terms, the values of $\beta^k$ and
$h$ can be obtained using once again maximum consensus and hence all
the components necessary for the error checking method can be
computed in a distributed way.

We observe that in the first $T$ iterations, i.e., Stage 1, only two executions of maximum consensus
algorithms is required, where one is used to compute
$\norm{w(t+1)-w(t)}_\infty$ and the other for $\beta^k$. On the other hand, even
though the computation of the value $h$ in Stage 2 needs only one
execution of the maximum consensus algorithm, the term
$\norm{w(t+1)-w(t)}_\infty$ needs to be computed at each dual
iteration $t$. Therefore the error checking in Stage 1 can be completed much more efficiently than in Stage 2. Hence, when we design values $p$ and $\epsilon$ in
Assumption \ref{ass:errorBoundEps}, we should choose $p$ to be relatively large, which results in an error checking method that does not enter Stage 2 frequently, and is hence faster.

\bibliographystyle{plain}
\bibliography{citationJournal}

\end{document}